\numberwithin{equation}{section}
\newtheorem{theorem}[equation]{Theorem}
\newtheorem{lemma}[equation]{Lemma}
\newtheorem{proposition}[equation]{Proposition}
\theoremstyle{definition}
\newtheorem{definition}[equation]{Definition}
\theoremstyle{remark}
\newtheorem{remark}[equation]{Remark}
\def\kint_#1{\mathchoice%
          {\mathop{\kern 0.2em\vrule width 0.6em height 0.69678ex depth -0.58065ex
                  \kern -0.8em \intop}\nolimits_{\kern -0.4em#1}}%
          {\mathop{\kern 0.1em\vrule width 0.5em height 0.69678ex depth -0.60387ex
                  \kern -0.6em \intop}\nolimits_{#1}}%
          {\mathop{\kern 0.1em\vrule width 0.5em height 0.69678ex depth -0.60387ex
                  \kern -0.6em \intop}\nolimits_{#1}}%
          {\mathop{\kern 0.1em\vrule width 0.5em height 0.69678ex depth -0.60387ex
                  \kern -0.6em \intop}\nolimits_{#1}}}
\def\vintslides_#1{\mathchoice%
          {\mathop{\kern 0.1em\vrule width 0.5em height 0.697ex depth -0.581ex
                  \kern -0.6em \intop}\nolimits_{\kern -0.4em#1}}%
          {\mathop{\kern 0.1em\vrule width 0.3em height 0.697ex depth -0.604ex
                  \kern -0.4em \intop}\nolimits_{#1}}%
          {\mathop{\kern 0.1em\vrule width 0.3em height 0.697ex depth -0.604ex
                  \kern -0.4em \intop}\nolimits_{#1}}%
          {\mathop{\kern 0.1em\vrule width 0.3em height 0.697ex depth -0.604ex
                  \kern -0.4em \intop}\nolimits_{#1}}}
\newcommand{\R}{\mathbb{R}}
\newcommand{\Rn}{\mathbb{R}^d}
\newcommand{\Z}{\mathbb{Z}}
\newcommand{\abs}[1]{\lvert#1\rvert}
\renewcommand{\limsup}{\operatornamewithlimits{lim \, sup}}
\renewcommand{\l}{\left}
\renewcommand{\r}{\right}
\def\Xint#1{\mathchoice
{\XXint\displaystyle\textstyle{#1}}%
{\XXint\textstyle\scriptstyle{#1}}%
{\XXint\scriptstyle\scriptscriptstyle{#1}}%
{\XXint\scriptscriptstyle\scriptscriptstyle{#1}}%
\!\int}
\def\XXint#1#2#3{{\setbox0=\hbox{$#1{#2#3}{\int}$}
\vcenter{\hbox{$#2#3$}}\kern-.5\wd0}}
\def\dashint{\Xint-}
\newenvironment{list2}{
  \begin{list}{$\bullet$}{%
      \setlength{\itemsep}{0in}
      \setlength{\parsep}{0in} \setlength{\parskip}{0in}
      \setlength{\topsep}{0in} \setlength{\partopsep}{0in} 
      \setlength{\leftmargin}{0.2in}}}{\end{list}}
\title[Representation and asymptotic behavior of solutions]{Representation of solutions and large-time behavior for fully nonlocal diffusion equations}
\author{Jukka Kemppainen, Juhana Siljander and Rico Zacher}
\begin{document}

\subjclass[2010]{Primary 35R11. Secondary 45K05, 35C15, 47G20}

\keywords{nonlocal diffusion, Riemann-Liouville derivative, fractional Laplacian, decay of solutions, energy inequality, Green matrix, fundamental solution}

\begin{abstract}
We study the Cauchy problem for a nonlocal heat equation, which is of fractional order both in space 
and time. We prove four main theorems: 
\begin{itemize}
\item[(i)] a representation formula for classical solutions,
\item[(ii)] a quantitative decay rate at which the solution tends to the fundamental solution,
\item[(iii)] optimal $L^2$-decay of mild solutions in all dimensions,
\item[(iv)] $L^2$-decay of weak solutions via energy methods.
\end{itemize}

The first result relies on a delicate analysis of the definition of classical solutions. After proving the representation formula we carefully analyze the integral representation to obtain the quantitative decay rates of (ii). 

Next we use Fourier analysis techniques to obtain the optimal decay rate for mild solutions. Here we encounter the {\em critical dimension phenomenon} where the decay rate attains the decay rate of that in a bounded domain for large enough dimensions. Consequently, the decay rate does not anymore improve when the dimension increases. The theory is markedly different from that of the standard caloric functions and this substantially complicates the analysis. 

Finally, we use energy estimates and a comparison principle to prove a quantitative decay rate for weak solutions defined via a variational formulation. Our main idea is to show that the $L^2$--norm is actually a subsolution to a purely time-fractional problem which allows us to use the known theory to obtain the result.



\end{abstract}

\maketitle

\section{Introduction}

We study the Cauchy problem for the diffusion equation
\begin{equation}\label{prob}
\partial_t^{\alpha} (u(t,x)-u_0)+\mathcal L u(t,x)=f(t,x)\quad 
\mathrm{in}\quad\R_+ \times \Rn,\ \  0 < \alpha \le 1,
\end{equation} 
where \(u_0(x)=u(0,x)\) is the initial condition, $\partial_t^\alpha$ denotes the Riemann-Liouville fractional derivative if $\alpha\in (0,1)$ and $\mathcal L$ is a nonlocal elliptic operator of order $\beta \in (0,2]$. A standard example is the fractional Laplacian $\mathcal L = (-\Delta)^{\frac{\beta}{2}}$. The equation is nonlocal both in space and time and we call such a parabolic equation {\em a fully nonlocal diffusion equation}.

Our emphasis is on the decay properties, and for the space-fractional heat diffusion such questions have been studied, for instance, by Chasseigne, Chaves and Rossi in~\cite{ChasChavRoss06} as well as by Ignat and Rossi in~\cite{IgnaRoss09}. For a more comprehensive account of the asymptotic theory in case $\alpha=1$, we refer to~\cite{Ross09}.  The decay of solutions and behavior of the Barenblatt solution for the space-fractional porous medium equation has, in turn, been studied by Vazquez in~\cite{Vazq14}. In the present paper, we extend these developments -- concerning the fundamental solutions, representation formulas and decay properties -- to the above fully nonlocal equation. For the case $\beta=2$, see earlier works by Vergara and Zacher in~\cite{VergZach15} and by Vergara and the present authors in~\cite{KempSiljVergZach14}. For the regularity theory of nonlocal equations in case $\alpha=1$ or $\beta=2$, we refer to~\cite{CaffSilv12, CaffChanVass11, FelsKass13, BonfVazq13, BonfVazq14, KimLee13,BarlBassChenKass09, Zach13a, Zach13b} and the references therein.

Nonlocal PDE models arise directly, and naturally, from applications. 
Time fractional diffusion equations are closely related to a class of Montroll-Weiss
continuous time random walk (CTRW) models and have become
one of the standard physics approaches to model {\em anomalous diffusion} processes~\cite{DragKlaf00, CompCace98,Hilf03,MeerBensScheBeck02}. For a detailed derivation of these equations from physics principles and for further applications of such models we refer to the expository review article of Metzler and Klafter in~\cite{MetzKlaf00}. The fractional Laplacian arises in the modelling of jump processes and also in quantitative finance as a model for pricing American options~\cite{ContTank04, Silv05}. The fully nonlocal diffusion equation, in particular, has been used in diffusion models, for instance, in~\cite{CartCast07} and~\cite{CompCace98}. 

Despite their importance for applications, the mathematical study of fully nonlocal diffusion problems of type~\eqref{prob} is relatively young. In a very recent paper Allen, Caffarelli and Vasseur~\cite{AlleCaffVass15} have studied the regularity of weak solutions to such problems. Even more recently, simultaneously to our work, Kim and Lim~\cite{KimLim15} have considered the behavior of fundamental solutions, whereas Cheng, Li and Yamamoto~\cite{ChenLiYama15} have studied other aspects of the asymptotic theory. Apart from these papers, the study of the parabolic problem has mostly concentrated on the aforementioned cases $\alpha=1$ or $\beta=2$. 

We point out that the nonlocal in time term in (\ref{prob}), with $\partial_t^\alpha$ being the Riemann-Liouville
fractional derivation operator, coincides (for sufficiently smooth $u$) with the Caputo fractional derivative of $u$,
see (\ref{caputo}) below. The formulation with Riemann-Liouville fractional derivative has the
advantage that a priori less regularity is required on $u$ to define the nonlocal operator. In particular, our formulation is exactly the one which naturally arises from physics applications, see for instance~\cite[equation (40)]{MetzKlaf00}. 


Our first main result considers a representation formula for classical solutions of the Cauchy problem for equation~\eqref{prob} with $\mathcal L = (-\Delta)^{\frac\beta2}$. 
In the process, we calculate the exact behavior of the fundamental solutions.

Next, we show that the mild solutions, which are defined through the representation formula whenever its integrals are finite, tend to the fundamental solutions $Z$ and $Y$ -- corresponding to the initial and forcing data, respectively -- in $L^p$ with {\em quantitative} decay rates. Such results are nontrivial already for standard caloric functions, especially in the case of a non-vanishing forcing term. In particular, the proof requires a delicate analysis of the problem as well as gradient estimates for the fundamental solutions which can only be represented via so called Fox $H$-fucntions. In the analysis of these special functions we use number theoretic tools to obtain their behavior up to the first derivatives. A particular difficulty in all the analysis is caused by the fact that the fundamental solutions $Z$ and $Y$ have singularities also for positive times. This causes integrability problems and requires a delicate analysis.

We continue to study decay results by two additional approaches. In the first one, we use Fourier techniques to build {\em optimal} $L^2$--decay estimates for mild solutions of the aforementioned Cauchy problem. Contrary to the standard caloric functions, the decay rate does not improve with high enough dimensions, but there exists a critical dimension at which the decay rate of bounded domains is achieved. This {\em critical dimension phenomena} is brought by the introduction of the fractional Riemann-Liouville time-derivative and such behavior is not observed in the case $\alpha=1$. This also substantially complicates the analysis and we are required to use Riesz potential estimates to obtain the decay results. Thus the theory is markedly different from that of the standard heat equation.

Finally, we turn into studying the decay of weak solutions where we can consider operators $\mathcal L$ with general measurable kernels. We show that the $L^2$-norm of a weak solution, which is defined in a variational formulation, is a subsolution to a purely time-fractional equation. On the other hand, the exact behavior of the solutions for such problems is well-known and, therefore, we may use the comparison principle to conclude the result -- even in such a general context.  While our method gives the optimal decay rate in the case $\alpha=1$, the energy methods used in the proof cannot discriminate between large and small dimensions. Consequently, we are not able to obtain the non-smooth decay behavior -- and the consequent critical dimension phenomenon -- with respect to the dimension. Thus, it remains an open question whether our decay result is optimal in this context.

\section{Preliminaries and main results} \label{section:preliminaries}

\subsection{Notations and definitions}

Let us first fix some notations. We denote the space of \(k\)-times 
continuously differentiable functions by \(C^k\) and \(C^0:=C\). 

The Riemann-Liouville fractional integral of order \(\alpha \ge 0\) is defined for $\alpha = 0$ as \(J^0:=I\), where \(I\) denotes the identity operator, and for $\alpha >0$ as
\begin{equation}\label{Riemann-Liouville_int}
J^\alpha 
f(t)=\frac{1}{\Gamma(\alpha)}\int_0^t(t-\tau)^{\alpha-1}f(\tau)d\tau = (g_\alpha * f)(t),
\end{equation}
where 
\[
g_\alpha(t)=\frac{t^{\alpha-1}}{\Gamma(\alpha)}
\]
is the Riemann-Liouville kernel and $*$ denotes the convolution in time. We denote the convolution in space by $\star$ and the double convolution in space and time by $\hat*$. 

The Riemann-Liouville fractional derivative of order $0<\alpha  < 1$ is 
defined by
\begin{equation}\label{Riemann-Liouville}
\partial_t^{\alpha}f(t)=\frac{d}{dt} J^{1-\alpha}f(t)=\frac{d}{dt}\frac{1}{\Gamma(1-\alpha)}
\int_0^ { t } (t-\tau)^ { -\alpha } f(\tau)d\tau.
\end{equation}
Observe that for sufficiently smooth $f$ and $\alpha\in (0,1)$
\begin{equation} \label{caputo}
\partial_t^{\alpha}(f-f(0))(t)=(J^{1-\alpha} f')(t)=:{}^cD_t^\alpha f(t),
\end{equation}
the so-called Caputo fractional derivative of $f$.
In case $\alpha=1$, we have the standard time derivative. 

Let 
\[
\widehat{u}(\xi)=\mathcal{F}(u)(\xi)=(2\pi)^{-d/2}\int_{\Rn}\mathrm{e}^{
-ix\cdot\xi}f(x)dx
\] 
and
\[
\mathcal{F}^{-1}(u)(\xi):=\mathcal{F}(u)(-\xi)
\]
denote the Fourier and inverse Fourier transforms of $u$, respectively. We define the fractional Laplacian 
as 
\[
(-\Delta)^{\beta/2}u(x)=\mathcal{F}^{-1}_{\xi\to x}(|\xi|^\beta
\widehat{u}(\xi)).
\]

Next we define the concept of a classical solution 
of~\eqref{prob}, with $\mathcal L = (-\Delta)^{\beta/2}$, given with an initial condition \(u(0,x)=u_0(x)\). 

\begin{definition}
Let $0 < \alpha \le 1$ and $0 < \beta \le 2$. Suppose $u_0 \in C(\Rn)$ and $f \in C([0, \infty)\times\Rn)$.
Then a function $u \in C([0, \infty) \times \Rn) $ is a {\em classical solution} of the Cauchy problem
\begin{equation}\label{classical_equation}
\begin{split}
\begin{cases}
\ \partial_t^\alpha (u(t,x)-u_0)+(-\Delta)^{\beta/2}u(t,x) &= 
\ \,f(t,x),\quad\mathrm{in}\ \ (0,\infty)\times\Rn, \\
\qquad \qquad \qquad \qquad \qquad \qquad u(0,x) &= 
\ \, u_0(x),\quad\,\,\mathrm{in}\ \ \Rn,
\end{cases}
\end{split}
\end{equation}
if
\begin{itemize}

\item[(i)] $\mathcal{F}^{-1}_{\xi\to x}(|\xi|^\beta
\widehat{u}(\xi))$ defines a continuous function of $x$ for each $t>0$,

\item[(ii)] for every $x \in \Rn$, the fractional integral $J^{1-\alpha}u$, as 
defined in~\eqref{Riemann-Liouville_int}, is continuously differentiable with 
respect to $t>0$, and

\item[(iii)] the function $u(t,x)$ satisfies the 
integro-partial differential equation of~\eqref{classical_equation} for every 
$(t,x) \in (0, \infty) \times \Rn$ and the initial condition 
of~\eqref{classical_equation} for every \(x\in\Rn\). 
\end{itemize}
\end{definition}

We remark that under appropriate regularity conditions on the data, existence and uniqueness of strong $L^p$-solutions of (\ref{classical_equation}) follows from the results in~\cite{Zach05}, which are formulated in the framework
of abstract parabolic Volterra
equations, see also the monograph~\cite{Prus93}.

Next we turn in to the weak solutions to equation~\eqref{prob}. In place of the fractional Laplacian we will consider a more general class of elliptic operators. In this context, we avoid using the Fourier transform and the corresponding definition for the  fractional Laplacian is given by its singular integral representation
\begin{equation}\label{variational_laplacian}
(-\Delta)^{\frac{\beta}{2}}u(x)=c(d,\beta)\mathrm{P.V.}\int_{\Rn}\frac{
u(x)-u(y) } { |x-y|^ { d+\beta}}dy
\end{equation}
where P.V. stands for the Cauchy principal value and \(c\) is a constant. 
In~\cite{Silv07} it is shown that \((-\Delta)^{\frac{\beta}{2}}u(x)\) is 
a continuous function  whenever \(u\) is locally in \(C^2(\Rn)\) 
and 
\[
\int_{\Rn}\frac{|u(x)|}{1+|x|^{d+\beta}}dx<\infty.
\]
We will study the weak formulation where we define the operator through a bilinear form. We begin by setting up the problem. 

We define the fractional Sobolev space $W^{\frac\beta2, 2}(\Rn)$ for $\beta \in (0,2)$ as
\[
W^{\frac\beta2, 2}(\Rn):= \l\{v \in L^2(\Rn): \frac{|v(x)-v(y)|}{|x-y|^{\frac{d+\beta}{2}}} \in L^2(\Rn\times \Rn)\r\}
\]
endowed with the norm
\[
\|v\|_{W^{\frac\beta2, 2}(\Rn)}:=\l(\int_{\Rn}|v|^2 \, dx + \int_{\Rn}\int_{\Rn} \frac{|v(x)-v(y)|^2}{|x-y|^{d+\beta}} \, dx \, dy\r)^{1/2}.
\]

Let $0 < \lambda \le \Lambda$ and define the {\em kernel} $K: \Rn  \times \Rn \to [0, \infty)$ to be a measurable function such that
\begin{equation}\label{K_assumption}
\frac\lambda{|x-y|^{d+\beta}} \le K(x,y) \le \frac\Lambda{|x-y|^{d+\beta}}
\end{equation}
for almost every $x, y \in \Rn$ and for some $\beta \in (0,2)$.  Consider the bilinear form
\[
\mathcal E(u,v) :=  \int_{\Rn}\int_{\Rn} K(x,y)[u(x)-u(y)] \cdot [v(x)-v(y)] \, dx \, dy
\]
for any $u, v \in W^{\frac\beta2, 2}(\Rn)$. Let $\varphi \in W^{\frac\beta2, 2}(\Rn)$. We now define an elliptic operator $\mathcal L$ by
\[
\langle \mathcal L  u, \varphi\rangle= \mathcal E(u, \varphi),
\]
where $\langle \cdot,\cdot \rangle$ stands for the duality pairing on $V'\times V$ with $V=W^{\frac\beta2, 2}(\Rn)$.
Observe that if $K(x,y)=c(d,\beta)|x-y|^{-d-\beta}$ the operator $\mathcal L$ 
defined here gives the fractional Laplacian of~\eqref{variational_laplacian}. 

We study the Cauchy problem for weak solutions of the equation
\begin{equation}\label{weak_equation}
\partial_t^\alpha(u-u_0)+\mathcal L u = 0. 
\end{equation}
In the case $\alpha=1$, a weak solution is defined in the classical way. 
Letting $T>0$, a natural parabolic function space for defining weak solutions on $[0,T]\times \R^d$ in the case $\alpha\in (0,1)$
is given by
\begin{align*}
F_\alpha(T) := \{ v \in L^{\frac{2}{1-\alpha}, \infty}([0, T]; L^2(\Rn)) \cap L^2([0, T]; W^{\frac\beta2, 2}(\Rn)) \ \ \text{such that}& \ \ \\
 g_{1-\alpha} * v \in C([0, T]; L^2(\Rn)) \ \ \text{and} \ \ (g_{1-\alpha}*v)|_{t=0}=0&\},
\end{align*}
cf.~\cite{Zach09}. 

The definition of weak solution (in the case $\alpha\in (0,1)$) is now the following. 
\begin{definition}\label{weak_solution_def}
Let $u_0 \in L^2(\Rn)$ and $u:[0, \infty) \times \R^d \to \R$ be such that for any $T>0$ we have 
$u|_{[0,T]\times \R^d} \in F_\alpha(T)$. Then we say that $u$ is a {\em 
weak solution} of equation~\eqref{weak_equation} with initial condition $u|_{t=0}=u_0$ if for all $T>0$
\begin{align*}
&\int_{0}^T\int_{\Rn} -[g_{1-\alpha}*(u(t,x)-u_0(x))]\partial_t\varphi(t,x) \, dx \, dt  \\
& + \int_0^T \int_{\Rn}\int_{\Rn} K(x,y)[u(t,x)-u(t,y)]\cdot[\varphi(t,x)-\varphi(t,y)] \, dx \, dy \, dt=0
\end{align*}
 for all test functions $\varphi \in W^{1,2}([0, T]; L^2(\Rn)) \cap L^2([0, T];W^{\frac\beta2, 2}(\Rn))$ with 
 $\varphi|_{t=T} = 0$ in $L^2(\R^d)$.
\end{definition} 

We recall that existence and uniqueness of weak solutions in $F_\alpha(T)$ has been studied in~\cite{Zach09},
even in a more general context. 

\subsection{Fox $H$-functions}

The Fox $H$-functions are special functions of a very general nature and there 
is a natural connection to the fractional calculus, since the
fundamental solutions of the Cauchy problem can be represented in terms of 
them. Since the asymptotic behavior of the Fox \(H\)-functions can be found 
from the literature, the Fox \(H\)-functions have a crucial role also in our 
asymptotic analysis. We collect here some basic facts on these functions.

Let us start with the definition. To simplify the notation we introduce
\[
(a_i,\alpha_i)_{k,p}:=((a_k,\alpha_k),(a_{k+1},\alpha_{k+1}),\dots,(a_p,
\alpha_p))
\]
for the set of parameters appearing in the definition of Fox $H$-functions. The 
Fox \(H\)-function is defined via a Mellin-Barnes type
integral as
\begin{equation}\label{mellin_barnes_integral}
H^{mn}_{pq} (z):= H^{mn}_{pq}\big[ z \big| \begin{smallmatrix}
                           (a_i,\alpha_i)_{1,p}\\
(b_j,\beta_j)_{1,q}                 
                                           \end{smallmatrix}
\big] =\frac{1}{2\pi i}\int_{\mathcal{L}}\mathcal{H}^{mn}_{pq}(s)z^{-s} 
ds,
\end{equation}
where 
\begin{equation}\label{mellin_transform}
\mathcal{H}^{mn}_{pq}(s)=\frac{\prod_{j=1}^m \Gamma(b_j+\beta_j s)\prod_{i=1}^n 
\Gamma(1-a_i-\alpha_i s)}{\prod_{i=n+1}^p\Gamma(a_i+\alpha_i 
s)\prod_{j=m+1}^q\Gamma(1-b_j-\beta_j s)}
\end{equation}
is the Mellin transform of the Fox H-function \(H_{pq}^{mn}\) and 
\(\mathcal{L}\) is the infinite contour in the complex plane which separates 
the poles
\begin{equation}\label{poles_b_jl}
b_{jl}=\frac{-b_j-l}{\beta_j}\quad (j=1,\dots,m;\, l=0,1,2,\dots)
\end{equation}
of the Gamma function \(\Gamma(b_j+\beta_j s)\) to the left of \(\mathcal{L}\) 
and the poles
\begin{equation}\label{poles_a_ik}
a_{ik}=\frac{1-a_i+k}{\alpha_i} \quad (i=1,\dots,n;\, k=0,1,2,\dots)
\end{equation}
to the right of \(\mathcal{L}\). 


We will need the following properties from Chapter 2 of~\cite{KilbSaig04}.

\begin{lemma}\label{Fox_properties}
Properties of Fox H-functions:
\begin{list2}
\vspace{.1in}
\item[(i)] \(
\frac{d}{d z}H^{mn}_{pq}\big[ z \big| \begin{smallmatrix}(a_i,\alpha_i)_{1,p}\\
(b_j,\beta_j)_{1,q} \end{smallmatrix} \big] = z^{-1}H^{m, n+1}_{p+1, q+1}\big[ z \big| \begin{smallmatrix} (0,1), &(a_i,\alpha_i)_{1,p}\\
(b_j,\beta_j)_{1,q}, &(1,1) \end{smallmatrix} \big]
\)
\vspace{.2in}

\item[(ii)] \(H_{pq}^{mn}\bigl[ z^{-1} \big| \begin{smallmatrix} (a_i,\alpha_i)_{1,p} \\
                                 (b_j,\beta_j)_{1,q}
                                \end{smallmatrix}
\big]=H_{qp}^{nm}\bigl[ z \big| \begin{smallmatrix} 
(1-b_j,\beta_j)_{1,q}\\
(1-a_i,\alpha_i)_{1,p}                                  
                                \end{smallmatrix}
\big] 
\)

\vspace{.2in}

\item[(iii)] \( H_{pq}^{mn}\big[ z \big| \begin{smallmatrix}
                            (a_i,\alpha_i)_{1,p-1},&(b_1,\beta_1)\\
(b_j,\beta_j)_{1,q} &
                           \end{smallmatrix}
\big]=H_{p-1,q-1}^{m-1,n}\big[ z \big| \begin{smallmatrix}
                            (a_i,\alpha_i)_{1,p-1}\\
(b_j,\beta_j)_{2,q}
                           \end{smallmatrix}
\big]. \)
\vspace{.2in}

\item[(iv)] \(
\partial_t^{\sigma} H_{pq}^{mn}\big[ t^{\rho} 
\big|\begin{smallmatrix}(a_i,\alpha_i)_{1,p}\\
(b_j,\beta_j)_{1,q} \end{smallmatrix} \big] =t^{-\sigma}H_{p+1, 
q+1}^{m,n+1}\big[ t^\rho \big| \begin{smallmatrix}
                            (0,\rho),&(a_i,\alpha_i)_{1,p}\\
(b_j,\beta_j)_{1,q}, &(\sigma, \rho)
                           \end{smallmatrix}
\big].
\)
\vspace{.2in}

\item[(v)] For $b>0$ and $x>0$ we have
\begin{align*}
&\int_0^\infty (xr)^{\omega}J_\eta(xr) H_{pq}^{mn}\big[ br^\tau \big|\begin{smallmatrix}(a_i,\alpha_i)_{1,p}\\
(b_j,\beta_j)_{1,q} \end{smallmatrix} \big] \, dr \\
&=\frac{2^\omega}{x}H_{p+2, q}^{m, n+1}\big[ b2^\tau x^{-\tau} \big|\begin{smallmatrix} \l(1-\frac{\omega+1}{2}-\frac{\eta}{2}, \frac{\tau}{2}\r), &(a_i,\alpha_i)_{1,p}, &\l(1 -\frac{\omega+1}{2}+\frac{\eta}{2}, \frac{\tau}{2}\r)\\
(b_j,\beta_j)_{1,q} \end{smallmatrix} \big]
\end{align*}

\vspace{.1in}
\item[(vi)] 
\(zH_{pq}^{mn}\big[ z \big| \begin{smallmatrix}
                            (a_i,\alpha_i)_{1,p}\\
(b_j,\beta_j)_{1,q} &
                           \end{smallmatrix} \big]=H_{pq}^{mn}\big[ z \big| \begin{smallmatrix}
                            (a_i+\alpha_i,\alpha_i)_{1,p}\\
(b_j+\beta_j,\beta_j)_{1,q} &
                           \end{smallmatrix}\big]. \)
\vspace{.1in}
\end{list2}

\begin{proof}
The first four properties are straightforward calculations based on the Mellin-Barnes integral representation of Fox $H$-functions. For properties (v) and (vi), we refer to Corollary 2.5.1 and Property 2.5 of~\cite{KilbSaig04}, respectively.
\end{proof}
\end{lemma}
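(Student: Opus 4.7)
The plan is to treat parts (i)--(iv) uniformly by manipulating the Mellin--Barnes integral \eqref{mellin_barnes_integral}--\eqref{mellin_transform}, observing that each identity reduces to producing an extra quotient of Gamma functions which matches the parameter insertion on the right-hand side. Throughout, the technical preliminary is to check absolute convergence on the contour $\mathcal L$ so that we may differentiate, integrate fractionally, or substitute under the integral sign; this follows from the classical stirling-type estimates on $|\Gamma(\sigma+i\tau)|$ used in Chapter~1 of~\cite{KilbSaig04} to define $H^{mn}_{pq}$ in the first place. For (v) and (vi) I would simply cite Corollary 2.5.1 and Property~2.5 of~\cite{KilbSaig04} as the authors indicate.

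For (i), differentiating under the integral gives
\[
\frac{d}{dz} H^{mn}_{pq}(z) \;=\; z^{-1}\cdot\frac{1}{2\pi i}\int_{\mathcal L}(-s)\,\mathcal H^{mn}_{pq}(s)\,z^{-s}\,ds ,
\]
and the functional equation $\Gamma(1-s)=-s\,\Gamma(-s)$ lets us rewrite $-s=\Gamma(1-s)/\Gamma(-s)$. On the right-hand side of (i), the extra parameter $(0,1)$ in the upper row of the $H$-symbol inserts a factor $\Gamma(1-0-1\cdot s)=\Gamma(1-s)$ in the numerator of $\mathcal H^{m,n+1}_{p+1,q+1}$, and the extra $(1,1)$ in the lower row inserts $\Gamma(1-1-1\cdot s)=\Gamma(-s)$ in the denominator; this is exactly the factor $-s$, so the two Mellin--Barnes representations agree. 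Part (iii) is even easier: by assumption the last upper parameter equals $(b_1,\beta_1)$, so the product $\prod_{i=n+1}^{p}\Gamma(a_i+\alpha_i s)$ in the denominator of $\mathcal H^{mn}_{pq}$ contains the factor $\Gamma(b_1+\beta_1 s)$, which cancels the $j=1$ factor in the upper product $\prod_{j=1}^{m}\Gamma(b_j+\beta_j s)$, giving directly the Mellin transform of $H^{m-1,n}_{p-1,q-1}$.

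For (ii), I would apply the substitution $s\mapsto -s$; since the contour $\mathcal L$ is symmetric under reflection up to orientation (separating poles of $\Gamma(b_j+\beta_j s)$ from those of $\Gamma(1-a_i-\alpha_i s)$), this sends $z^{-s}$ to $(z^{-1})^{-s}$ and transforms each factor according to $\Gamma(b_j-\beta_j s)=\Gamma(1-(1-b_j)-\beta_j s)$ and $\Gamma(1-a_i+\alpha_i s)=\Gamma((1-a_i)+\alpha_i s)$. Matching these against the defining product for $\mathcal H^{nm}_{qp}$ with swapped and complemented parameters yields the claim. For (iv) I would use the Riemann--Liouville identity $\partial_t^{\sigma} t^{\lambda}=\Gamma(\lambda+1)/\Gamma(\lambda+1-\sigma)\,t^{\lambda-\sigma}$ applied termwise to $t^{-\rho s}$ inside the integral, producing the factor $\Gamma(1-\rho s)/\Gamma(1-\rho s-\sigma)$; this matches the Mellin transform gained by inserting $(0,\rho)$ as an extra upper parameter and $(\sigma,\rho)$ as an extra lower parameter in $H^{m,n+1}_{p+1,q+1}$.

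The only genuinely delicate point is the justification of swapping $\partial_t^\sigma$ with the contour integral in (iv), because the Riemann--Liouville derivative is itself a nonlocal operator. I would handle this by first writing $\partial_t^\sigma=\frac{d}{dt}J^{1-\sigma}$ (or directly applying $J^{-\sigma}$ when $\sigma<0$), then using Fubini on the resulting double integral — which is legitimate as soon as the Mellin--Barnes integrand decays exponentially on $\mathcal L$ and $t>0$ — and finally differentiating under the contour integral using the same exponential decay of $|\mathcal H^{mn}_{pq}(s)|$ for $|\mathrm{Im}\,s|\to\infty$. All the other manipulations are straightforward once convergence is in place.
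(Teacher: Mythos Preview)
Your proposal is correct and follows exactly the approach indicated in the paper: the paper's proof simply asserts that (i)--(iv) are ``straightforward calculations based on the Mellin--Barnes integral representation'' and cites Corollary~2.5.1 and Property~2.5 of~\cite{KilbSaig04} for (v) and (vi), and you have supplied precisely those straightforward calculations together with the same citations. The only difference is that you give more detail than the paper does.
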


\begin{remark}\label{rem_Htrans_formula}
There are some restrictive conditions on the parameters appearing in \((v)\) 
(for details, see~\cite[Corollary 2.5.1]{KilbSaig04}). The conditions are 
required for the convergence of the integrals. Since \((v)\) represents a 
Hankel 
transform formula for the Fox H-functions and the Fourier transform can be 
written as a Hankel transform, we will use \((v)\) in the proof of Theorem 
2.22 to calculate the inverse Fourier transform of \(\xi\mapsto 
|\xi|^\beta\widehat{Y}(t,\xi)\), which is not integrable in general. But since both 
sides of \((v)\) depend analytically on our choice of parameters, the identity 
\begin{equation}\label{distr_F_trans}
\langle \widehat{f},\varphi\rangle=\langle f,\widehat{\varphi}\rangle,
\end{equation} 
where \(\langle\cdot,\cdot\rangle\) denotes the duality pairing on 
\(\mathcal{S}'\times\mathcal{S}\) with $\mathcal{S}$ denoting the space of Schwartz functions on $\R^d$, allows us by analytic continuation to 
conclude that \((v)\) is valid even for a wider range of parameters than the 
range given by the restrictions in~\cite[Corollary 2.5.1]{KilbSaig04}. Then 
the Hankel transform formula \((v)\) has to be understood as the generalized 
Fourier transform~\eqref{distr_F_trans}. For details on 
generalizing integral identities we refer to~\cite{GelfShil64}.
\end{remark}

An important special case of the function \(H^{11}_{12}(-z)\) with the 
parameters \((a_i,\alpha_i)_{1,1}=(0,1)\) and 
\((b_j,\beta_j)_{1,2}=((0,1),(1-\alpha,\beta))\) is the two-parameter 
Mittag-Leffler function
\begin{equation}\label{mittag_asymp1}
E_{\alpha,\beta}(z)=\sum_{k=0}^\infty\frac{z^k}{\Gamma(\beta+\alpha k)}.
\end{equation}
It appears in the fundamental solutions of the Cauchy problem for 
integro-ordinary differential equations. Since the 
problem~\eqref{classical_equation} formally transforms into
\begin{equation}\label{classical_equation_four}
\begin{split}
\begin{cases}
\quad\ \partial_t^\alpha 
(\widehat{u}(t,\xi)-\widehat{u}_0(\xi))+|\xi|^{\beta}\widehat{u}(t,\xi) 
&=\quad \widehat{f}(t,\xi), \\
\qquad \qquad \qquad \qquad \qquad \qquad \widehat{u}(0,\xi) &= \quad 
\widehat{u}_0(\xi),
\end{cases}
\end{split}
\end{equation}
 the fundamental solutions in the Fourier domain can 
be formally expressed in terms of Mittag-Leffler functions.
It can be shown rigorously that in our case the fundamental solutions $Z$ and $Y$ 
satisfy 
\begin{equation}\label{fund_fourier_trans}
\widehat{Z}(\xi,t)=(2\pi)^{-d/2} E_{\alpha,1}(-\abs{\xi}^\beta t^{\alpha}).
\end{equation}
and
\begin{equation}\label{Y_fourier_trans}
\widehat{Y}(t, \xi)= (2\pi)^{-d/2} t^{\alpha-1}E_{\alpha, \alpha}(-|\xi|^\beta 
t^\alpha).
\end{equation}
The Mittag-Leffler function 
\(E_{\alpha,\alpha}(-x)\) is known to be completely monotone for \(x\in\R_+\) 
and it has the asymptotics 
\begin{equation}\label{ML_estimate}
E_{\alpha, \alpha}(-x) \sim \frac{1}{1+x^2}, \quad x \in \R_+ .
\end{equation}
For $E_{\alpha, 1}$ we have the asymptotic behavior
\begin{equation}\label{ML2_estimate}
E_{\alpha, 1}(-x) \sim \frac{1}{1+x}, \quad x \in \R_+ .
\end{equation}

The asymptotic behavior~\eqref{ML_estimate} follows from an integral 
representation
\[
E_{\alpha,\beta}(z)=\frac{1}{2\pi 
i}\int_{\mathcal{C}}\frac{t^{\alpha-\beta}e^t}{t^\alpha-z}dt,
\]
where \(\mathcal{C}\) is an infinite contour in the complex plane. For details 
we refer to~\cite[Chapter 18]{Erdelyi53}. Alternatively, one  can use the 
connection to the Fox \(H\)-function and use the asymptotic behavior known for 
the Fox \(H\) functions, see Section~\ref{section_Fox_asymptotics}. For the function \(Y\) we obtain the asymptotics
\begin{equation}\label{Y_FT_estimate}
\widehat{Y}(t, \xi) \sim \frac{t^{\alpha-1}}{1+|\xi|^{2\beta} t^{2\alpha}}.
\end{equation}

\subsection{Main results}

Our first theorem states that a classical solution of (\ref{classical_equation})  has an integral representation involving \(Z\) 
and \(Y\) mentioned above. Since we defined the fractional Laplacian via the Fourier transform, we need to 
guarantee that \(\mathcal{F}^{-1}_{\xi\to x}(|\xi|^\beta\widehat{u}(t,\xi))\) 
determines a continuous function in \(x\). In particular, by the 
Riemann-Lebesgue Lemma, 
this is true if \(|\cdot|^\beta\widehat{u}(t,\cdot)\in L^1(\Rn)\). We also need 
that \(u(\cdot,x)\) is a continuous function up to \(0\) for all \(x\in\Rn\).
For these purposes, we impose the condition
\begin{equation}\label{f_hat_assumption}
|\widehat{f}(t,\xi)| \le C|g(\xi)|
\end{equation}
for the forcing term \(f\), where the function $g$ satisfies  
\begin{equation}\label{g_assumption}
(1+|\cdot|^\beta) g(\cdot) \in L^1(\Rn),
\end{equation}
and $C>0$ is a constant which is uniform in time.

\begin{theorem}\label{fundamental_solution_thm}
Let $u_0 \in L^1(\Rn)$ be such a function that $\widehat u_0 
\in L^1(\Rn)$ and let $f$ be a function satisfying 
\(f(t,\cdot)\in L^1(\Rn)\) for all \(t\ge 0\) and~\eqref{f_hat_assumption} with 
\(g\) satisfying~\eqref{g_assumption}. Define 
\begin{equation}\label{fund_solution}
Z(t,x)=\pi^{-d/2}|x|^{-d}H_{32}^{12}\bigl[ 2^\beta 
t^{\alpha}|x|^{-\beta} \big| \begin{smallmatrix} (1-\frac{d}{2},\frac\beta2), 
&(0,1), 
&(0,\frac\beta2)\\ (0,1),& (0,\alpha )& 
\end{smallmatrix} \big]
\end{equation}
and 
\begin{equation}\label{fund_solution2}
Y(t,x)=\pi^{-d/2}t^{\alpha-1}|x|^{-d}H_{32}^{12}\bigl[ 2^\beta 
t^{\alpha}|x|^{-\beta} \big| \begin{smallmatrix} (1-\frac{d}{2},\frac\beta2), 
&(0,1), 
&(0,\frac\beta2)\\ (0,1),& (1-\alpha,\alpha )& 
\end{smallmatrix} \big].
\end{equation}
Then the function
\[
\Psi(t,x)=\int_{\Rn} Z(t,x-y)u_0(y)\, dy+\int_0^t\int_{\Rn} Y(t-s,x-y)f(s,y)\, dy \, ds
\]
is a classical solution to problem~\eqref{classical_equation}.
\end{theorem}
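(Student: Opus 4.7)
The plan is to pass to the Fourier side and reduce the PDE to the scalar fractional ODE (\ref{classical_equation_four}) at each frequency $\xi$. Its unique continuous solution, obtained by the standard variation-of-parameters formula for fractional Volterra equations, is
\[
\widehat{\Psi}(t,\xi)=E_{\alpha,1}(-|\xi|^\beta t^\alpha)\,\widehat{u}_0(\xi)+\int_0^t (t-s)^{\alpha-1}E_{\alpha,\alpha}\bigl(-|\xi|^\beta(t-s)^\alpha\bigr)\widehat{f}(s,\xi)\,ds.
\]
By (\ref{fund_fourier_trans}) and (\ref{Y_fourier_trans}), the right-hand side equals $(2\pi)^{d/2}\widehat{Z}(t,\cdot)\widehat{u}_0+(2\pi)^{d/2}\int_0^t\widehat{Y}(t-s,\cdot)\widehat{f}(s,\cdot)\,ds$, which, by the convolution theorem for the chosen normalization of $\mathcal{F}$, is exactly the Fourier transform of the candidate $\Psi$. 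Inverting pointwise then yields the claimed formula, provided the prescribed $Z$ and $Y$ are the inverse Fourier transforms of $\widehat{Z}$ and $\widehat{Y}$.

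\textbf{Identification of $Z$ and $Y$.} I would rewrite $E_{\alpha,1}(-|\xi|^\beta t^\alpha)$ and $t^{\alpha-1}E_{\alpha,\alpha}(-|\xi|^\beta t^\alpha)$ as Fox $H$-functions of $|\xi|^\beta t^\alpha$ using the $H^{11}_{12}$ representation noted after (\ref{mittag_asymp1}). Since both are radial in $\xi$, the inverse Fourier transform on $\R^d$ reduces to a Hankel transform, and property (v) of Lemma~\ref{Fox_properties} converts this into the $H^{12}_{32}$ expressions appearing in (\ref{fund_solution}) and (\ref{fund_solution2}), after applying property (iii) to simplify parameters. For $Y$, the integrand of the inverse transform is not absolutely convergent in the regime needed, so this identity must first be interpreted in $\mathcal{S}'$ and then promoted to a pointwise identity by the analytic-continuation argument of Remark~\ref{rem_Htrans_formula}.

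\textbf{Verification of (i)--(iii).} The decay bounds (\ref{ML_estimate})--(\ref{ML2_estimate}) give $|\widehat{Z}(t,\xi)|\lesssim (1+|\xi|^\beta t^\alpha)^{-1}$ and $|\widehat{Y}(t,\xi)|\lesssim t^{\alpha-1}(1+|\xi|^\beta t^\alpha)^{-1}$. Combined with $\widehat{u}_0\in L^1(\R^d)$ and with (\ref{f_hat_assumption})--(\ref{g_assumption}), these show that both $\widehat{\Psi}(t,\cdot)$ and $|\cdot|^\beta\widehat{\Psi}(t,\cdot)$ lie in $L^1(\R^d)$ for each $t>0$, with bounds uniform on compact subsets of $(0,\infty)$. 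This yields (i) via Riemann--Lebesgue and makes $\Psi(t,\cdot)$ and $(-\Delta)^{\beta/2}\Psi(t,\cdot)$ continuous in $x$. The key integrability of $|\xi|^\beta\widehat{f}(s,\xi)$ uniformly in $s$ is exactly what the factor $(1+|\cdot|^\beta)$ in (\ref{g_assumption}) provides. For (ii), I would compute $\mathcal{F}(J^{1-\alpha}\Psi)$, bring the derivative in $t$ inside the integral using the same majorants, and verify continuity in $t$; (iii) then follows by applying $\mathcal{F}^{-1}$ to the scalar ODE satisfied by $\widehat{\Psi}$, with the integrability of $|\xi|^\beta\widehat{\Psi}$ legitimizing the pointwise inversion. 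The initial condition $\Psi(0,x)=u_0(x)$ is obtained via dominated convergence from $\widehat{Z}(t,\xi)\to (2\pi)^{-d/2}$ as $t\to 0^+$ and integrability of $(t-s)^{\alpha-1}$ in $s$ on $(0,t)$, which makes the forcing convolution vanish at $t=0$.

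\textbf{Main obstacle.} The most delicate step is the rigorous interchange of the Fourier inversion with the Riemann--Liouville derivative in the verification of (iii), together with the distributional justification of the Fox $H$-representation of $Y$, since $|\xi|^\beta\widehat{Y}$ is not integrable in $\xi$. Both issues are ultimately controlled by exploiting assumption (\ref{g_assumption}) and the Mittag-Leffler decay (\ref{ML_estimate})--(\ref{ML2_estimate}) to produce integrable majorants uniform on compact time intervals, so that Fubini and dominated convergence apply throughout.
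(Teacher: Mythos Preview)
Your strategy is sound and would work, but it differs substantially from the paper's argument in the verification of condition (ii)—the differentiability in time of $J^{1-\alpha}\Psi$. You propose to stay on the Fourier side throughout: compute $\mathcal{F}(J^{1-\alpha}\Psi)=J^{1-\alpha}\widehat{\Psi}$, differentiate in $t$ under the inversion integral, and control everything by the Mittag–Leffler majorants times $|\widehat{u}_0|$ or $|g|$. This does go through (one gets, e.g., $|\partial_t J^{1-\alpha}\widehat{\Psi}_1(t,\xi)|\lesssim t^{-\alpha}|\widehat{u}_0(\xi)|$ and, via the substitution $u=|\xi|^\beta\tau^\alpha$, the time integral in the forcing piece is bounded by a constant times $|g(\xi)|$), but note that the ``same majorants'' you invoke are not literally the ones from Step~I; you need the derivative bounds for $E_{\alpha,1}$ and the identity $\partial_t E_{\alpha,1}(-\lambda t^\alpha)=-\lambda t^{\alpha-1}E_{\alpha,\alpha}(-\lambda t^\alpha)$, which you should state.

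The paper instead handles (ii) for the forcing term entirely in physical space. It first uses the relation $Y=\partial_t^{1-\alpha}Z$ to rewrite $J^{1-\alpha}\Psi_2=\int_0^t\!\int_{\R^d}Z(t-\tau,x-y)f(\tau,y)\,dy\,d\tau$, then computes the difference quotient in $t$ directly, exploiting that $Z(t,\cdot)$ is a probability measure and that the Fourier hypotheses \eqref{f_hat_assumption}--\eqref{g_assumption} force $f(t,\cdot)$ to be H\"older continuous uniformly in $t$. The pointwise asymptotics of $Z$ from Lemma~\ref{fund_sol_asympt} are then used to show the singular boundary contributions vanish. For (iii) the paper again works concretely: it computes $(-\Delta)^{\beta/2}Y$ via the Hankel transform formula and identifies it with $-\partial_t Z$ by manipulating Mellin symbols. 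Your route replaces these explicit Fox $H$-function computations by dominated convergence arguments on the Fourier side, which is cleaner conceptually but shifts all the work into justifying the interchanges. Either approach is valid; the paper's buys concrete physical-space control (which it reuses later for the decay theorems), while yours is more self-contained for the representation result alone.
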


\begin{remark}
In our asymptotic analysis we prefer to use the similarity variable 
\(R=t^{-\alpha}|x|^\beta\) similarly as in~\cite{EideKoch04}. Therefore, it is 
desirable to use the property \((ii)\) of Lemma~\ref{Fox_properties} and 
write \(Z\) in a form
\begin{equation}\label{fund_solution_Z2}
Z(t,x)=\pi^{-d/2}|x|^{-d}H_{23}^{21}\bigl[ 2^{-\beta} 
t^{-\alpha}|x|^{\beta} \big| \begin{smallmatrix} (1,1),& 
(1,\alpha)& \\ (\frac{d}{2},\beta/2), 
&(1,1), 
&(1,\beta/2) 
\end{smallmatrix} \big]
\end{equation}
and \(Y\) in a form
\begin{equation}\label{fund_solution_Y2}
Y(t,x)=\pi^{-d/2}t^{\alpha-1}|x|^{-d}H_{23}^{21}\bigl[ 2^{-\beta} 
t^{-\alpha}|x|^{\beta} \big| \begin{smallmatrix} (1,1),& (\alpha,\alpha )& \\ 
(\frac{d}{2},\beta/2), 
&(1,1), 
&(1,\beta/2)
\end{smallmatrix} \big].
\end{equation}

Observe that in the special case \(\beta=2\), we obtain the time-fractional diffusion equation. Its decay properties have been studied in~\cite{KempSiljVergZach14} and for the behavior of its fundamental solution, we refer to~\cite{Koch90}. If we restrict our formula~\eqref{fund_solution} to the case $\beta=2$, it reduces to
\begin{equation}\label{fund_solution_beta2}
Z(x,t)=\pi^{-d/2}\abs{x}^{-d}H_{32}^{12}\bigl[ 4 
t^{\alpha}|x|^{-2} \big| \begin{smallmatrix} (1-\frac{d}{2},1), &(0,1), 
&(0,1)\\ (0,1),& (0,\alpha )& 
\end{smallmatrix} \big].
\end{equation}
Using the properties (ii) and (iii) of the Fox $H$-function from Lemma~\ref{Fox_properties} gives
\[
\begin{split}
H_{32}^{12}\bigl[ 4 
t^{\alpha}r^{-2} \big| \begin{smallmatrix} (1-\frac{d}{2},1), &(0,1), 
&(0,1)\\ (0,1),& (0,\alpha )& 
\end{smallmatrix} \big]&=H_{21}^{02}\big[ 4 
t^{\alpha}r^{-2} \big| \begin{smallmatrix}
            (1-\frac{d}{2},1), &(0,1)\\
(0,\alpha ) &           
                       \end{smallmatrix} \big]\\
&=H_{12}^{20}\big[ \frac{1}{4}|x|^2t^{-\alpha} \big| \begin{smallmatrix}
       (1,\alpha) & \\
(\frac{d}{2},1),&(1,1)                                              
                                                     \end{smallmatrix}
\big].
\end{split}
\]
Therefore the formula~\eqref{fund_solution_beta2} reads as
\[
Z(t,x)=\pi^{-d/2}|x|^{-d} H_{12}^{20}\big[ \frac{1}{4}|x|^2t^{-\alpha} \big| 
\begin{smallmatrix}
       (1,\alpha) & \\
(\frac{d}{2},1),&(1,1)                                              
                                                     \end{smallmatrix}
\big] , 
\]
which is exactly the same as obtained by Kochubei in \cite[Formula 
(18)]{Koch90}.

As explained earlier, the functions \(Z\) and \(Y\) can be derived by taking the Fourier 
transform with respect to the spatial variable \(x\) and the Laplace transform with respect to
time in~\eqref{prob}. For more details we refer to~\cite{Duan05}. Our contribution is in showing that they induce a representation formula, even for relatively rough initial and forcing data.
\end{remark}

Adopting the notion of the Green matrix from~\cite{EideKoch04}, we call the 
pair 
$(Z, Y)$ the {\em matrix of fundamental solutions} of equation~\eqref{classical_equation}. 
Next we define the concept of mild solutions by means of the above 
representation formula.

\begin{definition}
Let $u_0$ and $f$ be Lebesgue measurable functions on $\R^d$ and $[0,\infty)\times \R^d$, respectively. The function $u$ defined by
\begin{align*}
u(t,x)&=\int_{\Rn} Z(t,x-y)u_0(y)\, dy+\int_0^t\int_{\Rn} Y(t-s,x-y)f(s,y)\, dy 
\, ds \\
&=:u_{init}(t,x) + u_{forc}(t,x)
\end{align*}
is called the {\em mild solution}  of the Cauchy 
problem~\eqref{classical_equation} whenever the integrals in the above formula 
are well defined.
\end{definition}

We are particularly interested in the case where the data belong to some 
Lebesgue spaces. Note that our case differs from the usual heat equation. 
For example, in the case of the heat equation it is enough that \(u_0\in 
C(\Rn)\cap L^\infty(\Rn)\) for the above defined \(u\) to be the classical solution 
of the homogeneous equation. As we shall see, for $d \ge 2$ and $\alpha<1$ the function \(Z(t,x)\) has a singularity not 
only in \(t\), but also in \(x\), which implies that more smoothness on 
\(u_0\) 
is required. The function \(Y\) also has a singularity both in \(t\) and \(x\). 
Notice that 
this resembles the Laplace equation, for which the 
fundamental solution \(u(x)=c(d)|x|^{2-d}\) has a singularity at \(x=0\). In a 
sense this reflects the {\em elliptic nature} of the nonlocal PDE when $\alpha<1$.


Next we turn in to the decay of mild solutions.  
We give a quantitative rate at which the solution decays to its fundamental solution and, moreover, if the first moment of the initial datum is finite, we can say even more. These results 
are analogous with the ones for the heat equation in~\cite{Zuaz03}. However, unlike in the case of caloric functions, we need to restrict our study of the $L^p$-decay to a certain range of possible values of $p$. This is caused 
by the fact that the fundamental solution lacks integrability for large enough 
\(p\). Note that this does not happen for the heat kernel, which belongs to 
\(L^\infty(\Rn)\) for all $t>0$. In the limiting case we prove a convergence result in the 
weak \(L^p\)-norm.

Denote 
\[
\kappa_1(\beta, d)=
\begin{cases}
\frac{d}{d-\beta+1}, \quad \text{for} \ \ d > \beta-1, \\
\infty, \quad \text{for} \ \ d \le \beta-1
\end{cases}
\]
and
\[
\kappa_2(\beta, d)=\begin{cases} \frac{d}{d-2\beta}, \quad\text{if} \ \ d > 2\beta, \\
\infty, \quad\text{otherwise}. \end{cases}
\] 
In order to obtain decay for the solution, we need to assume that there exists a $\gamma >1$ such that
\begin{equation}\label{decay_cond_f}
\|f(t,\cdot)\|_{L^1(\Rn)}\lesssim (1+t)^{-\gamma},\quad t> 0.
\end{equation}
Set also
\[
M_{init}=\int_{\Rn} u_0(y)\,dy\quad\text{and}\quad M_{forc}=\int_{0}^\infty\int_{\Rn} f(t,y)\,dy \, dt.
\] 

With this notation we have the following result.

\begin{theorem} \label{zuazua}
Let $d \ge 1$, $u_0\in L^1(\Rn)$ and $f \in L^1(\R_+\times \Rn)$.  Suppose $f$ satisfies~\eqref{decay_cond_f} with some
$\gamma>1$. Assume that \(u\) is the mild solution of equation~\eqref{classical_equation}.

\noindent (i) Then
\[
t^{\frac{\alpha d }{\beta}\,\left(1-\frac{1}{p}\right)}\|u_{init}(t, \cdot)-M_{init}Z(t, \cdot)\|_{L^p} \rightarrow 0,\quad\mbox{as}\;t\rightarrow \infty,
\]
\ \quad for all $p \in [1, \kappa_1)$, and
\[
t^{1+\frac{\alpha 
d}{\beta}(1-\frac{1}{p})-\alpha}\|u_{forc}(t,\cdot)-M_{forc}Y(t,\cdot)\|_{L^p} \to 0,\quad 
t\to\infty,
\]
\ \quad for all \(1\le p\le\infty\), if \(\alpha=1\) or \(d<2\beta\), and for \(p \in [1, \kappa_2)\), if \(d \ge 2\beta\).

(ii) Assume in addition that $\||x|u_0\|_{L^1}<\infty$. Then
\[
t^{\frac{\alpha d }{\beta}\,\left(1-\frac{1}{p}\right)}\|u_{init}(t, \cdot)-M_{init}Z(t, \cdot)\|_{L^p}\lesssim t^{ -\frac{\alpha}{\beta}},\quad t>0.
\]
Moreover, in the limit case $p=\kappa_1(\beta, d)$ we have
\[
t^{\frac{\alpha(\beta-1) }{\beta}}\|u_{init}(t, \cdot)-M_{init}Z(t, \cdot)\|_{L^{\kappa_1(\beta, d),\,\infty}}\lesssim t^{ -\frac{\alpha}{\beta}},\quad t>0.
\]
\end{theorem}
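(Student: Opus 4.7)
The argument mimics the Zuazua-type strategy for the classical heat equation but replaces the Gaussian kernel estimates by the Fox $H$-function asymptotics established earlier in the paper. The two indispensable inputs are the self-similar scaling
\[
Z(t,x)=t^{-\alpha d/\beta}Z(1,t^{-\alpha/\beta}x),\qquad Y(t,x)=t^{\alpha-1-\alpha d/\beta}Y(1,t^{-\alpha/\beta}x),
\]
together with the pointwise bounds $|\nabla Z(1,x)|\lesssim |x|^{\beta-1-d}$ near the origin (with faster-than-polynomial decay at infinity), and the analogous estimate for $\nabla Y(1,\cdot)$. These dictate precisely when $\nabla Z(1,\cdot)\in L^p$ (resp.\ $\nabla Y(1,\cdot)\in L^p$): the thresholds are $p<\kappa_1(\beta,d)$ and $p<\kappa_2(\beta,d)$. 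Rescaling yields
\[
\|\nabla Z(t,\cdot)\|_{L^p}\lesssim t^{-\frac{\alpha d}{\beta}(1-\frac1p)-\frac{\alpha}{\beta}},\qquad \|\nabla Y(t,\cdot)\|_{L^p}\lesssim t^{\alpha-1-\frac{\alpha d}{\beta}(1-\frac1p)-\frac{\alpha}{\beta}},
\]
with finite prefactors in the indicated $p$-ranges.

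For part (ii), I would write
\[
u_{init}(t,x)-M_{init}Z(t,x)=\int_{\Rn}\bigl[Z(t,x-y)-Z(t,x)\bigr]u_0(y)\,dy,
\]
apply the mean value theorem to bound the bracket by $|y|\sup_{\theta\in[0,1]}|\nabla Z(t,x-\theta y)|$, and invoke Minkowski's integral inequality together with translation invariance to obtain
\[
\|u_{init}(t,\cdot)-M_{init}Z(t,\cdot)\|_{L^p}\le \||x|u_0\|_{L^1}\|\nabla Z(t,\cdot)\|_{L^p}\lesssim t^{-\frac{\alpha d}{\beta}(1-\frac1p)-\frac{\alpha}{\beta}},
\]
which becomes the announced $t^{-\alpha/\beta}$ after multiplying by $t^{\alpha d(1-1/p)/\beta}$. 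The initial-data statement in (i) follows by density: choose $u_0^\eps\in C_c^\infty$ with $\|u_0-u_0^\eps\|_{L^1}<\eps$, apply (ii) to $u_0^\eps$, and control the residue through $\|u_{init}-u_{init}^\eps\|_{L^p}\le \|u_0-u_0^\eps\|_{L^1}\|Z(t,\cdot)\|_{L^p}$ and the analogous bound on $|M_{init}-M_{init}^\eps|\,\|Z(t,\cdot)\|_{L^p}$; letting $t\to\infty$ and then $\eps\to 0$ gives the little-$o$ statement.

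For the forcing term I would set $F(s)=\int_{\Rn}f(s,y)\,dy$ and decompose
\begin{align*}
u_{forc}(t,x)-M_{forc}Y(t,x)&=\int_0^t\int_{\Rn}\bigl[Y(t-s,x-y)-Y(t-s,x)\bigr]f(s,y)\,dy\,ds\\
&\quad+\int_0^t\bigl[Y(t-s,x)-Y(t,x)\bigr]F(s)\,ds-Y(t,x)\int_t^\infty F(s)\,ds.
\end{align*}
The last piece is absorbed using $\|f(s,\cdot)\|_{L^1}\lesssim (1+s)^{-\gamma}$, which produces a tail factor $\int_t^\infty (1+s)^{-\gamma}\,ds\lesssim t^{1-\gamma}\to 0$ that beats the normalization $t^{1-\alpha+\alpha d(1-1/p)/\beta}\|Y(t,\cdot)\|_{L^p}$ since $\gamma>1$. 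The first two pieces are handled by approximating $f$ by $f^\eps$ of compact support in $(s,y)$, applying the mean value theorem in $y$ to the first term and using item (iv) of the Fox $H$-function properties (which expresses $\partial_t Y$ as another Fox $H$-function) together with the fundamental theorem of calculus in $t$ for the second, and splitting the $s$-integral as $(0,t/2)\cup (t/2,t)$ to absorb the singularity of $\nabla Y(t-s,\cdot)$ as $s\uparrow t$. A final density step removes the extra regularity.

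The main obstacle is the limit case $p=\kappa_1$, where the above Young-type bound diverges because $\nabla Z(1,\cdot)\notin L^{\kappa_1}$. The fix is to work in the Lorentz space $L^{\kappa_1,\infty}$, which does contain $\nabla Z(1,\cdot)$ by virtue of the homogeneity $|\nabla Z(1,x)|\sim |x|^{\beta-1-d}$, and to replace Minkowski by O'Neil's convolution inequality $L^1*L^{\kappa_1,\infty}\hookrightarrow L^{\kappa_1,\infty}$. Using $1-1/\kappa_1=(\beta-1)/d$, a one-line scaling computation gives $\|\nabla Z(t,\cdot)\|_{L^{\kappa_1,\infty}}\lesssim t^{-\alpha}$, so after multiplying by $t^{\alpha(\beta-1)/\beta}$ one recovers exactly the stated rate $t^{-\alpha/\beta}$.
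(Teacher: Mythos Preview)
Your treatment of the initial-data part is essentially the paper's argument: the paper packages the mean-value step via the Duoandikoetxea--Zuazua decomposition $u_0=M_{init}\delta_0+\operatorname{div}\phi$ (Lemma~\ref{decomp}), which turns $u_{init}-M_{init}Z$ exactly into $\nabla Z\star\phi$ and then invokes Young's inequality and Lemma~\ref{G_Lp_gradient}. Your direct fundamental-theorem-of-calculus computation gives the same bound; one technical point is that ``$\sup_{\theta}|\nabla Z(t,x-\theta y)|$'' followed by ``translation invariance'' is not correct as written (the sup over $\theta$ is a maximal function, not a single translate), but replacing the pointwise MVT by the integral form $Z(t,x-y)-Z(t,x)=-\int_0^1\nabla Z(t,x-\theta y)\cdot y\,d\theta$ repairs this immediately. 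The density argument for (i) and the Lorentz endpoint are the same as in the paper.

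For the forcing term there is a genuine gap. You assert that $\nabla Y(1,\cdot)\in L^p$ precisely for $p<\kappa_2=\frac{d}{d-2\beta}$, but this is false: by Lemma~\ref{G_gradient} (case $d>2\beta$) one has $|\nabla Y(1,x)|\sim |x|^{-d-1+2\beta}$ near the origin, so $\nabla Y(1,\cdot)\in L^p$ only for $p<\frac{d}{d+1-2\beta}$, which is strictly smaller than $\kappa_2$. Consequently your mean-value bound on the spatial piece $\int_0^t\int[Y(t-s,\cdot-y)-Y(t-s,\cdot)]f(s,y)\,dy\,ds$ produces the factor $\|\nabla Y(t-s,\cdot)\|_{L^p}=+\infty$ throughout the range $\frac{d}{d+1-2\beta}\le p<\kappa_2$, and neither compact support of $f^\eps$ nor splitting the $s$-integral rescues this. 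The paper avoids this obstacle by \emph{not} applying the mean value theorem globally: it introduces a parameter $\delta\in(0,\tfrac12)$, sets $\Omega_1(t)=(0,\delta t)\times\{|y|\le(\delta t)^{\alpha/\beta}\}$, and on $\Omega_1$ distinguishes whether $|x-y|\le 2(\delta t)^{\alpha/\beta}$ or not. In the first case (near the spatial singularity of $Y$) it uses the triangle inequality and the pointwise bound $|Y|\lesssim t^{-\alpha-1}|x|^{-d+2\beta}$ directly, picking up a positive power of $\delta$; only in the second case (away from the singularity) does it use the MVT, where the supremum is harmless because $|\tilde x|\ge(\delta t)^{\alpha/\beta}$. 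The complement $\Omega_2$ is then handled by the $L^p$-bound on $Y$ combined with the decay condition $\gamma>1$. This $\delta$-decomposition is the missing idea in your outline for the $Y$-part.
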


Continuing on decay results, we now turn to study the $L^2$-decay of mild solutions. 
Observe the {\em critical dimension phenomenon} that the decay rate does not improve when the dimension is increased after $d > 2\beta$. Thus, the non-local case is markedly different from that of the standard caloric functions. Importantly, in Section~\ref{L2} we will also show the decay rate provided here is optimal. In particular, the decay rate below is sharp for all initial data $u_0$ such that $\int_{\Rn} u_0 \, dx \neq 0$.

\begin{theorem}\label{thm_optimal_decay}
Let $\alpha\in (0,1)$, \(d\ge 1\) and \(d\neq 2\beta\). Suppose $u$ is the mild solution of the Cauchy problem~\eqref{classical_equation} with \(u_0\in L^1(\R^d)\cap 
L^2(\R^d)\) and $f\equiv 0$. Then
\[
\|u(t,\cdot)\|_{L^2} \lesssim t^{-\alpha\mathrm{min}\{1,\frac{d}{2\beta}\}},\quad t>0.
\]
Moreover, in case $d=2\beta$ we have
\[
\|u(t,\cdot)\|_{L^{2, \infty}} \lesssim t^{-\alpha},\quad t>0.
\]

\end{theorem}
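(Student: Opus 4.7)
The plan is to work entirely on the Fourier side. Since $f\equiv 0$, the mild solution is $u(t,\cdot)=Z(t,\cdot)\star u_0$, and by~\eqref{fund_fourier_trans} (after accounting for the $(2\pi)^{d/2}$ convolution factor built into the Fourier convention) one has $\widehat u(t,\xi)=E_{\alpha,1}(-|\xi|^\beta t^\alpha)\widehat u_0(\xi)$. Plancherel together with the Mittag-Leffler estimate~\eqref{ML2_estimate} then yields
\[
\|u(t,\cdot)\|_{L^2}^2 \;\lesssim\; \int_{\Rn}\frac{|\widehat u_0(\xi)|^2}{(1+|\xi|^\beta t^\alpha)^2}\,d\xi.
\]
The strategy is to extract decay from this integral by the natural rescaling $\eta=t^{\alpha/\beta}\xi$. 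The resulting multiplier $(1+|\eta|^\beta)^{-2}$ is integrable if and only if $d<2\beta$, which is exactly what splits the argument into three regimes.

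\textbf{Subcritical regime $d<2\beta$.} Here $(1+|\eta|^\beta)^{-2}\in L^1(\Rn)$, so bounding $|\widehat u_0|\lesssim\|u_0\|_{L^1}$ pointwise and undoing the substitution immediately yields $\|u(t,\cdot)\|_{L^2}^2\lesssim t^{-\alpha d/\beta}$, which is the claim.

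\textbf{Supercritical regime $d>2\beta$.} I would split the Fourier integral at $|\xi|=t^{-\alpha/\beta}$. On low frequencies the same pointwise bound produces $t^{-\alpha d/\beta}\le t^{-2\alpha}$. On high frequencies, $(1+|\xi|^\beta t^\alpha)^{-2}\le|\xi|^{-2\beta}t^{-2\alpha}$, which reduces matters to estimating
\[
\int_{\Rn}|\xi|^{-2\beta}|\widehat u_0(\xi)|^2\,d\xi = \|I_\beta u_0\|_{L^2}^2,
\]
where $I_\beta$ is the Riesz potential of order $\beta$. The Hardy-Littlewood-Sobolev inequality supplies $\|I_\beta u_0\|_{L^2}\lesssim\|u_0\|_{L^{2d/(d+2\beta)}}$, and since the exponent $\tfrac{2d}{d+2\beta}\in(1,2)$ when $d>2\beta$, the hypothesis $u_0\in L^1\cap L^2$ together with interpolation renders the right-hand side finite. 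This gives $\|u(t,\cdot)\|_{L^2}^2\lesssim t^{-2\alpha}$, as claimed.

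\textbf{Critical case $d=2\beta$.} The previous argument breaks because Hardy-Littlewood-Sobolev degenerates at the endpoint $\tfrac{2d}{d+2\beta}=1$. Instead I would exploit the self-similarity that $\widehat Z(t,\xi)=cE_{\alpha,1}(-|\xi|^\beta t^\alpha)$ forces on $Z$, namely
\[
Z(t,x)=t^{-\alpha d/\beta}Z(1,t^{-\alpha/\beta}x) \quad\text{and hence}\quad \|Z(t,\cdot)\|_{L^{2,\infty}}=t^{-\alpha}\|Z(1,\cdot)\|_{L^{2,\infty}}.
\]
A residue analysis of the Mellin-Barnes integral defining the Fox $H$-function in~\eqref{fund_solution}, using the asymptotic machinery developed earlier in the paper, shows that $Z(1,x)\sim c|x|^{-\beta}$ near the origin with rapid decay at infinity. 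Since $|x|^{-\beta}\in L^{d/\beta,\infty}=L^{2,\infty}$ precisely at $d=2\beta$, one concludes $Z(1,\cdot)\in L^{2,\infty}(\Rn)$, and Young's inequality for weak Lebesgue spaces delivers
\[
\|u(t,\cdot)\|_{L^{2,\infty}}\lesssim\|Z(t,\cdot)\|_{L^{2,\infty}}\|u_0\|_{L^1}\lesssim t^{-\alpha}.
\]
The delicate step throughout is extracting the precise leading-order singularity $|x|^{-\beta}$ of $Z(1,\cdot)$ from the Mellin-Barnes integral; everything else reduces to standard Fourier and Riesz-potential manipulations once the correct splitting and rescaling have been chosen.
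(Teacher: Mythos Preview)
Your approach is essentially the paper's. The subcritical case and the critical case match exactly: for $d<2\beta$ the paper does the same Plancherel-plus-rescaling computation, and for $d=2\beta$ it applies the weak Young inequality together with the $L^{2,\infty}$ bound on $Z(t,\cdot)$ from Lemma~\ref{Z_Lp_decay}, whose proof is precisely the Fox $H$-function asymptotic analysis you sketch.

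In the supercritical case $d>2\beta$ the paper is marginally cleaner. Instead of splitting at $|\xi|=t^{-\alpha/\beta}$, it simply uses that $(1+|\xi|^\beta t^\alpha)^{-2}\le |\xi|^{-2\beta}t^{-2\alpha}$ holds for \emph{all} $\xi\neq 0$, and then applies Hardy--Littlewood--Sobolev globally; this gives $\|u(t,\cdot)\|_{L^2}^2\lesssim t^{-2\alpha}\|I_\beta u_0\|_{L^2}^2$ directly for every $t>0$. Your frequency split is harmless but unnecessary, and in fact introduces a small wrinkle: your low-frequency bound $t^{-\alpha d/\beta}$ is \emph{worse} than $t^{-2\alpha}$ when $0<t<1$ (since $d/\beta>2$), so the inequality $t^{-\alpha d/\beta}\le t^{-2\alpha}$ you wrote is false there. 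You would need to patch small times with the trivial contraction $\|u(t,\cdot)\|_{L^2}\le\|u_0\|_{L^2}$, or---more simply---drop the split altogether, since your high-frequency estimate already works on all of $\R^d$.
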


%
%

Finally, in the following theorem we turn in to the decay of weak solutions. The proof is based on a comparison principle and a priori estimates. It is an open question whether the decay rate here is optimal as it is not as good as the one obtained by the Fourier methods in the previous theorem. The same phenomenon is present already in the case of the time fractional diffusion~\cite{KempSiljVergZach14}. Observe that our method gives the correct decay when applied to the heat equation. 

For $s\in (0,1)$ we set
\[
[v]_{W^{s,1}(\R^d)}=\int_{\R^d} \int_{\R^d} \frac{|v(x)-v(y)|}{|x-y|^{d+s}}\,dx\,dy,
\]
which is the Gagliardo-seminorm of the Sobolev Slobodecki space $W^{s,1}(\R^d)$. 

\begin{theorem}\label{decay_weak_solution}
Let $u_0 \in L^1(\Rn) \cap L^2(\Rn)$ and suppose the kernel $K$ satisfies~\eqref{K_assumption} with some $\beta\in (0,2)$. Let $u$ be the weak solution of equation~\eqref{weak_equation} with initial condition $u|_{t=0}=u_0$, and assume that
\begin{equation} \label{ucondition}
\int_0^T [u(t,\cdot)]_{W^{\frac{\beta}{2},1}(\R^d)}\,dt<\infty\quad \mbox{for all}\,\;T>0.
\end{equation}
Then 
\[
\|u(t, \cdot) \|_{L^2} \lesssim (1+t)^{-\frac{\alpha d}{d+2\beta}}, \quad t>0.
\]
\end{theorem}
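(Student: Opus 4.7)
My plan is to reduce the decay estimate to a scalar nonlinear time-fractional differential inequality for $y(t):=\|u(t,\cdot)\|_{L^2}^2$ and invoke a scalar comparison principle to extract the claimed polynomial rate. The target inequality is
\[
\partial_t^\alpha\bigl(y-y(0)\bigr)(t)+c\,y(t)^{1+\beta/d}\le 0,\qquad y(0)=\|u_0\|_{L^2}^2,
\]
understood weakly in $t$; a variant of the fractional scalar comparison argument used in~\cite{VergZach15, KempSiljVergZach14} then produces the desired algebraic decay. All formal manipulations will be justified on suitable Steklov-type time regularizations and spatial mollifications of $u$, using its membership in the energy class $F_\alpha(T)$ to pass to the limit.

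\textbf{Step 1 --- energy inequality and $L^1$-contraction.} First, I would test the weak formulation in Definition~\ref{weak_solution_def} with a regularization of $u$ itself and apply the Alikhanov--Kochubei--Zacher chain-rule inequality
\[
u(t,x)\,\partial_t^{\alpha}(u-u_0)(t,x)\ge \tfrac{1}{2}\,\partial_t^{\alpha}\bigl(u^2-u_0^2\bigr)(t,x)
\]
pointwise in $x$. Integrating in $x\in\R^d$ and using the definition of $\mathcal E$ produces
\[
\tfrac12\,\partial_t^{\alpha}\bigl(\|u(t,\cdot)\|_{L^2}^2-\|u_0\|_{L^2}^2\bigr)+\mathcal E\bigl(u(t,\cdot),u(t,\cdot)\bigr)\le 0.
\]
Separately, testing with a smooth truncation $\mathrm{sgn}_{\varepsilon}(u)$, exploiting the sign monotonicity
\[
K(x,y)\bigl[u(x)-u(y)\bigr]\bigl[\mathrm{sgn}(u(x))-\mathrm{sgn}(u(y))\bigr]\ge 0
\]
and a fractional chain rule for the Lipschitz convex map $|\cdot|$, and passing $\varepsilon\to 0$ with the help of assumption~\eqref{ucondition}, yields the $L^1$-contraction $\|u(t,\cdot)\|_{L^1}\le \|u_0\|_{L^1}$ for all $t>0$.

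\textbf{Step 2 --- Nash, scalar inequality, and comparison.} The ellipticity~\eqref{K_assumption} gives $\mathcal E(u,u)\ge \lambda\,[u]_{W^{\beta/2,2}(\R^d)}^2$. Combining with the fractional Nash inequality
\[
\|v\|_{L^2}^{2+2\beta/d}\lesssim [v]_{W^{\beta/2,2}(\R^d)}^2\,\|v\|_{L^1}^{2\beta/d}
\]
and the $L^1$-contraction from Step 1, I bound $\mathcal E(u,u)\gtrsim \|u(t,\cdot)\|_{L^2}^{2+2\beta/d}\,\|u_0\|_{L^1}^{-2\beta/d}$. Insertion into the energy inequality produces the scalar subsolution inequality announced above. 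A comparison argument against an explicit power-type supersolution of the nonlinear fractional ODI (in the spirit of~\cite{VergZach15, KempSiljVergZach14}) then delivers $y(t)\lesssim (1+t)^{-2\alpha d/(d+2\beta)}$, and the theorem follows by taking square roots.

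\textbf{Main obstacle.} The most delicate point is the rigorous derivation of the $L^1$-contraction. The sign function is not an admissible test function in Definition~\ref{weak_solution_def}, so one must regularize it and then commute the resulting limits both with the nonlocal bilinear form $\mathcal E$ and with the Riemann--Liouville derivative acting on $u-u_0$. Assumption~\eqref{ucondition} is not granted by membership in $F_\alpha(T)$ alone --- it is introduced precisely to provide a time-integrable majorant for error terms of the form $\mathcal E(u,\mathrm{sgn}_{\varepsilon}u-\mathrm{sgn}\,u)$, thereby enabling the $\varepsilon\to 0$ limit and making the $L^1$-contraction rigorous at the level of weak solutions. A secondary technical hurdle is to ensure that the combined energy/chain-rule manipulations, which are pointwise in time at the formal level, are compatible with the weak-in-time structure of the subsolution inequality so that the scalar comparison principle can be applied.
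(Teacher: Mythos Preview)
Your overall strategy---energy inequality, $L^1$-contraction, fractional Nash, scalar comparison---matches the paper's. The gap is in the choice of scalar variable. Working with $y(t)=\|u(t,\cdot)\|_{L^2}^2$ via the pointwise convexity inequality $u\,\partial_t^\alpha(u-u_0)\ge \tfrac12\,\partial_t^\alpha(u^2-u_0^2)$ gives the ODI
\[
\partial_t^\alpha(y-y_0)+c\,y^{1+\beta/d}\le 0,
\]
but the comparison result from \cite{VergZach15} for $\partial_t^\alpha(w-w_0)+\mu w^\gamma=0$ yields $w(t)\sim (1+t)^{-\alpha/\gamma}$ (this is the correct fractional rate: for large $t$ one has $\partial_t^\alpha(w-w_0)\sim -w_0 t^{-\alpha}/\Gamma(1-\alpha)$, forcing $w^\gamma\sim t^{-\alpha}$). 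With $\gamma=1+\beta/d$ you therefore get only $y\lesssim (1+t)^{-\alpha d/(d+\beta)}$ and hence $\|u\|_{L^2}\lesssim (1+t)^{-\alpha d/(2d+2\beta)}$, which is \emph{strictly slower} than the claimed $(1+t)^{-\alpha d/(d+2\beta)}$. Your asserted bound $y\lesssim (1+t)^{-2\alpha d/(d+2\beta)}$ does not follow from your ODI. The point is that for the time-fractional derivative the pointwise chain rule loses too much; the paper instead uses the norm-level inequality (Lemma~\ref{LemmaL2IN})
\[
\int_{\R^d} u\,\partial_t^\alpha(u-u_0)\,dx\;\ge\;\|u\|_{L^2}\,\partial_t^\alpha\big(\|u\|_{L^2}-\|u_0\|_{L^2}\big),
\]
which, after Nash and division by $\|u\|_{L^2}$, produces an ODI for $v=\|u\|_{L^2}$ itself with exponent $\gamma=1+2\beta/d$ and hence the stated rate $\alpha/\gamma=\alpha d/(d+2\beta)$.

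A minor remark on your ``main obstacle'': in the paper, assumption~\eqref{ucondition} is not used to pass $\varepsilon\to 0$ in the sign regularization, but rather to remove the spatial cut-off $\psi\in C_0^1(B_{R+1})$ in the $L^1$-contraction argument; the error term coming from $\psi(x)-\psi(y)$ in the bilinear form is controlled by a constant multiple of $R^{-\beta/2}[u(t,\cdot)]_{W^{\beta/2,1}}$ plus a tail, and \eqref{ucondition} ensures these vanish as $R\to\infty$ after convolving with $g_\alpha$.
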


\begin{remark}
(i) As our proof shows, Theorem \ref{decay_weak_solution} (trivially) extends to the case where the kernel $K$ also depends
on time $t$, that is $K=K(t,x,y)$, provided that $K$ is measurable on $(0,\infty)\times \R^d \times \R^d$ and
\eqref{K_assumption} holds a.e.\ in this set with $K(t,x,y)$ in place of $K(x,y)$. In this more general formulation,
our result can be also applied to certain {\em quasilinear} equations which satisfy suitable structure conditions. 

(ii) The authors believe that by careful estimates for appropriate approximating equations (as in \cite{Zach09})
combined with Gagliardo-Nirenberg inequalities one can show that the weak solution of equation~\eqref{weak_equation} always satisfies the technical condition (\ref{ucondition}) provided that $u_0 \in L^1(\Rn) \cap L^2(\Rn)$. For the sake of simplicity we do not go into the details here.    
\end{remark}

\section{Auxiliary tools}

We recall some classical results which are needed in the theory.

\subsection{Review of harmonic analysis}

Let $f\star g$ denote the convolution of $f,g$ on $\Rn$. We recall the Young's inequality for convolutions: for any triple $1\le p,q,r\le \infty$ satisfying $1+\frac{1}{r}=\frac{1}{p}+\frac{1}{q}$
\begin{equation} \label{Young1}
\|f\star g\|_{L^r}\le \|f\|_{L^p} \|g\|_{L^q},\quad f\in L^p(\Rn),\,g\in L^q(\Rn).
\end{equation}

We also recall the strengthened version for weak type spaces: Let 
$1<p,q,r<\infty$
satisfy $1+\frac{1}{r}=\frac{1}{p}+\frac{1}{q}$. Then
\begin{equation} \label{Young2}
\|f\star g\|_{L^r}\le C(p,q,r)\|f\|_{L^{p,\infty}} \|g\|_{L^q},\quad f\in 
L^{p,\,\infty}(\Rn),\,g\in L^q(\Rn),
\end{equation}
see \cite[Theorem 1.4.24]{Graf04}. In the case $q=1$ there also holds
\begin{equation} \label{Youngw}
\|f\star g\|_{L^{p,\,\infty}}\le C(p) \|f\|_{L^{p,\,\infty}} \|g\|_{L^1},\quad f\in 
L^p(\Rn),\,g\in L^1(\Rn),
\end{equation}
for all $1<p<\infty$, see \cite[Theorem 1.2.13]{Graf04}.

For the nonhomogeneous problem we need the integral form of the Minkowsky 
inequality in the following form. Let \(1\le p<\infty\) and \(F\) be a 
measurable function on the product space \(\mathbb{R}_+\times\Rn\). Then
\[
\Big(\int_{\mathbb{R}_+}\Big(\int_{\Rn}|F(t,x)|\,dx\Big)^p 
dt\Big)^{\frac{1}{p}}\le \int_{\Rn}\Big(\int_{\mathbb{R}_+}|F(t,x)|^p\, 
dt\Big)^{\frac{1}{p}} dx.
\]

We will also need the following decomposition lemma from \cite{DuoaZuaz92}.
\begin{lemma} \label{decomp}
Suppose $f\in L^1(\Rn)$ such that $\int_{\Rn} |x|\,|f(x)|\,dx<\infty$. Then there exists $F\in L^1(\Rn;\Rn)$ such that
\[
f=\left(\int_{\Rn} f(x)\,dx\right)\,\delta_0+\mbox{{\em div}}\,F
\]
in the distributional sense and
\[
\|F\|_{L^1(\Rn;\Rn)}\le C_d \int_{\Rn} |x|\,|f(x)|\,dx.
\]
\end{lemma}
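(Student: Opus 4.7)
The plan is to construct $F$ explicitly via a one-parameter family of rescalings that interpolates between $f$ and the Dirac mass $M\delta_0$, where $M=\int_{\Rn} f$. Set $f_t(x):=t^{-d}f(x/t)$ for $t\in(0,1]$, so that $f_1=f$ and $f_t\to M\delta_0$ weakly as $t\to 0^+$ (standard approximate-identity argument, using $\int f_t=M$). Formally, $f - M\delta_0 = \int_0^1 \partial_t f_t \, dt$, and a direct chain-rule computation shows the key divergence identity
\[
\partial_t\l[t^{-d}f(x/t)\r] = -\mathrm{div}_x\l[t^{-d-1}\,x\,f(x/t)\r],
\]
which leads me to define
\[
F(x) := -\int_0^1 t^{-d-1}\,x\, f(x/t)\,dt.
\]

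First, I verify that $F\in L^1(\Rn;\Rn)$. By Tonelli and the substitution $y=x/t$ (for fixed $t$, with $dx=t^d\,dy$),
\[
\|F\|_{L^1} \le \int_0^1\!\!\int_{\Rn} t^{-d-1}|x|\,|f(x/t)|\,dx\,dt = \int_0^1\!\!\int_{\Rn} |y|\,|f(y)|\,dy\,dt = \int_{\Rn}|y|\,|f(y)|\,dy,
\]
giving the required bound (with $C_d=1$). This same estimate shows the iterated integral is absolutely convergent, which legitimizes Fubini in the next step and also ensures that the formula for $F$ defines a function a.e.\ on $\Rn$.

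Second, I check the distributional identity. For any $\varphi\in C_c^\infty(\Rn)$, using Fubini and the substitution $y=x/t$,
\[
-\int_{\Rn} F\cdot\nabla\varphi\,dx = \int_0^1\!\!\int_{\Rn} t^{-d-1}\,x\cdot\nabla\varphi(x)\,f(x/t)\,dx\,dt = \int_{\Rn} f(y)\int_0^1 y\cdot\nabla\varphi(ty)\,dt\,dy.
\]
Recognizing $y\cdot\nabla\varphi(ty) = \frac{d}{dt}\varphi(ty)$, the inner integral equals $\varphi(y)-\varphi(0)$, so
\[
-\int_{\Rn} F\cdot\nabla\varphi\,dx = \int_{\Rn} f(y)\varphi(y)\,dy - M\varphi(0) = \langle f - M\delta_0,\,\varphi\rangle,
\]
which is exactly the statement $\mathrm{div}\,F = f - M\delta_0$ in $\mathcal{D}'(\Rn)$.

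There is no serious obstacle here; the only delicate point is justifying the interchange of $x$- and $t$-integrations, which is handled by the moment hypothesis $\int_{\Rn}|x|\,|f(x)|\,dx<\infty$ via the absolute bound established in the first step. The same computation, read for $\varphi\equiv 1$ (morally), motivates the construction and explains why the constant in the $L^1$ bound depends only on $d$.
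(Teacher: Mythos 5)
Your proof is correct and complete: the $L^1$ bound follows exactly as you compute (with $C_d=1$), the Fubini interchanges are justified by the moment hypothesis, and the distributional identity is verified cleanly via $y\cdot\nabla\varphi(ty)=\frac{d}{dt}\varphi(ty)$. The paper itself states this lemma without proof, citing Duoandikoetxea--Zuazua, and your construction is precisely the classical one from that reference (after the substitution $s=1/t$ your $F$ is $F(x)=-x\int_1^\infty s^{d-1}f(sx)\,ds$), so nothing further is needed.
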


%

We will also need the boundedness of the Riesz 
potential 
\[
(-\Delta)^{-\frac{\beta}{2}}f:= c_{d, \beta}\int_{\Rn} \frac{f(y)}{|x-y|^{d-\beta}} \, dy,
\]
for \(0<\beta<d\). We have the {\em 
Hardy-Littlewood-Sobolev theorem on fractional 
integration}~\cite[Theorem 6.1.3]{Graf04}:

\begin{theorem}\label{HLS_thm}
Let \(1\le p<d/\beta\) and $f \in L^p(\Rn)$. Then
\[
\|(-\Delta)^{-\frac{\beta}{2}}f\|_{L^q(\Rn)}\lesssim \|f\|_{L^p(\Rn)}
\]
for  \(p>1\) and
$$q=\frac{dp}{d-p\beta}.$$
 In case \(p=1\), we have
\[
\|(-\Delta)^{-\frac{\beta}{2}}f\|_{L^{\frac{dp}{d-p\beta},\infty}(\Rn)}\lesssim \|f\|_{L^1(\Rn)}.
\]
\end{theorem}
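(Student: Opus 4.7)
The strategy is to write $(-\Delta)^{-\beta/2}f = K \star f$ with $K(x) = c_{d,\beta}|x|^{-(d-\beta)}$ and then to apply directly the weak-type Young convolution inequalities \eqref{Young2} and \eqref{Youngw} recalled earlier in this section. The only analytic computation of substance is to identify the correct Lorentz space containing $K$.

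First I would compute the distribution function of $K$: the set where $c_{d,\beta}|x|^{-(d-\beta)} > \lambda$ is a ball whose volume is a constant multiple of $\lambda^{-d/(d-\beta)}$, so $K \in L^{s_0,\infty}(\Rn)$ with $s_0 := d/(d-\beta) \in (1,\infty)$, and $K$ belongs to no strong $L^s$, which is exactly why the weak-type Young inequalities are the right tool. For the strong bound in the range $1 < p < d/\beta$, I would then apply \eqref{Young2} with the weak-space factor being $K$: the scaling relation $1 + 1/r = 1/s_0 + 1/p$ rearranges to $1/r = 1/p - \beta/d$, giving exactly $r = dp/(d-p\beta) = q$, and the admissibility $1 < s_0, p, q < \infty$ is immediate from the hypotheses $0 < \beta < d$ and $1 < p < d/\beta$. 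The resulting bound $\|K \star f\|_{L^q} \lesssim \|K\|_{L^{s_0,\infty}}\|f\|_{L^p}$ is exactly what is claimed. For the endpoint $p=1$, the same kernel $K \in L^{s_0,\infty}$ paired with an $L^1$ input triggers \eqref{Youngw}, producing $\|K \star f\|_{L^{s_0,\infty}} \lesssim \|f\|_{L^1}$; note that $s_0$ coincides with $dp/(d-p\beta)$ precisely at $p=1$, so this is the stated weak-type endpoint bound.

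The main obstacle here is largely philosophical, since \eqref{Young2} and \eqref{Youngw} already encapsulate the genuine substance of Hardy--Littlewood--Sobolev. A self-contained proof would typically proceed via Hedberg's interpolation trick: splitting $K \star f(x)$ into the pieces $|y-x| < r$ and $|y-x| \ge r$, estimating them by $r^\beta\, Mf(x)$ (using the Hardy--Littlewood maximal function on the inner piece) and $r^{\beta - d/p}\|f\|_{L^p}$ (via H\"older on the outer piece) respectively, then optimizing in $r$ to obtain the pointwise bound $|K \star f(x)| \lesssim Mf(x)^{1 - p\beta/d}\|f\|_{L^p}^{p\beta/d}$, and finally invoking the strong- and weak-type bounds for $M$. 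In the present framework this is unnecessary, as the heavy analytic lifting has been delegated to the weak Young inequalities, and the remaining work is just bookkeeping of exponents.
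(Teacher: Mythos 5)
Your argument is correct, but it is worth noting that the paper does not prove this statement at all: it is quoted verbatim from Grafakos \cite[Theorem 6.1.3]{Graf04}, so there is no in-paper proof to match. What you do instead is derive it from the weak-type Young inequalities \eqref{Young2} and \eqref{Youngw}, which the paper also records (with citations to \cite{Graf04}), after the one genuine computation that the Riesz kernel $K(x)=c_{d,\beta}|x|^{-(d-\beta)}$ lies in $L^{s_0,\infty}(\Rn)$ with $s_0=d/(d-\beta)$; the exponent bookkeeping $1+\tfrac1q=\tfrac1{s_0}+\tfrac1p$ giving $q=\tfrac{dp}{d-p\beta}$, and the admissibility checks $1<s_0,p,q<\infty$ in the range $1<p<d/\beta$, $0<\beta<d$, are all right, as is the $p=1$ endpoint via \eqref{Youngw}. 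The honest caveat, which you state yourself, is that \eqref{Young2} (Grafakos Theorem 1.4.24) already carries essentially the full strength of Hardy--Littlewood--Sobolev, so your argument is a clean reduction to an equally deep cited fact rather than an independent proof; your sketch of Hedberg's trick (split at radius $r$, bound the near part by $r^\beta Mf(x)$ and the far part by H\"older as $r^{\beta-d/p}\|f\|_{L^p}$, optimize in $r$, then use the strong/weak maximal function bounds for $p>1$ and $p=1$ respectively) is the standard self-contained route and is correctly laid out, including why the endpoint $p=1$ only yields the weak-type conclusion. Either way, what you buy over the paper is an actual argument in place of a citation; what the citation buys the paper is brevity, since the result is classical and not the point of the section.
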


\subsection{Asymptotic behavior of the Fox 
$H$-functions}\label{section_Fox_asymptotics}
When developing the asymptotic behavior of the fundamental solution, we need 
the following representation formulas for the Fox $H$-function $H_{23}^{21}$. 
Here we have omitted the parameters of the Fox $H$-function and $H_{23}^{21}$ 
refers to the Fox $H$-function appearing either in~\ref{fund_solution_Z2} or 
in~\ref{fund_solution_Y2}. The following results hold for both functions.

\begin{theorem}\label{fox_series_expansion}
Let either \(\beta>\alpha\) and \(z\neq 0\), or \(\alpha=\beta\) and \(0<|z|<\delta\) with 
\(\delta=\alpha^{-\alpha}(\frac{1}{2})^{1/2}(\frac{\beta}{2})^{\beta/2}\). Then the Fox H-function \(H_{23}^{21}(z)\) is an
analytic function of \(z\) and
\begin{equation}\label{Fox_series}
H^{21}_{23}(z)=\sum_{j=1}^2\sum_{l=0}^\infty 
\mathrm{Res}_{s=b_{jl}}\big[\mathcal{H}^{21}_{23}(s)z^{-s}\big],
\end{equation}
where \(b_{jl}\) are given in~\eqref{poles_b_jl}. 
\end{theorem}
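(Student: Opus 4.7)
The plan is to apply the residue theorem to the Mellin--Barnes integral representation \eqref{mellin_barnes_integral}, shifting the contour $\mathcal{L}$ progressively further to the left and collecting residues at the poles $b_{jl}$ enumerated in \eqref{poles_b_jl}. Before doing so I would first pin down the structure of $\mathcal{H}^{21}_{23}(s)$ for the concrete parameter tuples appearing in \eqref{fund_solution_Z2} and \eqref{fund_solution_Y2}: the poles of $\Gamma(b_1+\beta_1 s)$ and $\Gamma(b_2+\beta_2 s)$ (those determining the $b_{jl}$, since $m=2$) are simple for generic choices, and one should record which of them are simple and which may coincide, so that residues can be computed in closed form in either case.

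Next I would form, for each large $N$, a closed rectangular contour $C_N$ consisting of a bounded piece of $\mathcal{L}$ (truncated at $|\Im s| = T_N$), a left-vertical segment $\Re s = -R_N$ chosen to pass midway between consecutive poles, and two horizontal closing pieces. By the residue theorem,
\begin{equation*}
\frac{1}{2\pi i}\oint_{C_N} \mathcal{H}^{21}_{23}(s)\,z^{-s}\,ds \;=\; \sum_{j=1}^{2}\sum_{l:\,b_{jl}\in C_N}\mathrm{Res}_{s=b_{jl}}\bigl[\mathcal{H}^{21}_{23}(s)z^{-s}\bigr].
\end{equation*}
The residue at a simple pole $b_{jl}=(-b_j-l)/\beta_j$ is $\frac{(-1)^l}{l!\,\beta_j}\,z^{-b_{jl}}$ multiplied by the values of the remaining $\Gamma$-factors at $s=b_{jl}$; this produces two power-series-like sums in fractional powers of $z$, which matches the right-hand side of \eqref{Fox_series}.

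The main technical obstacle is showing that the non-original parts of $C_N$ contribute nothing in the limit $T_N,R_N\to\infty$. This rests on Stirling's asymptotic
\begin{equation*}
|\Gamma(a+bs)| \;\sim\; \sqrt{2\pi}\,|bs|^{\Re(a+bs)-1/2}\,e^{-\tfrac{\pi}{2}|b\,\Im s|}\,e^{-\Re(bs)},\qquad |\Im s|\to\infty,
\end{equation*}
applied to every $\Gamma$ in the numerator and denominator of $\mathcal{H}^{21}_{23}$. On the horizontal segments $\Im s=\pm T_N$ the exponentials $e^{-\tfrac{\pi}{2}|\beta_j\,\Im s|}$ in the numerator are not fully cancelled by the denominator, yielding integrable decay as $T_N\to\infty$; I would track the precise sign count to make sure this holds uniformly for $\Re s\in[-R_N,c]$. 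On the vertical segment $\Re s=-R_N$ one combines the polynomial/exponential-in-$\Re s$ factors from Stirling with $|z^{-s}|=|z|^{-\Re s}$. The leading behavior is controlled by the quantity $\Delta=\sum_j\beta_j-\sum_i\alpha_i$, which in our setting equals $\beta-\alpha$: when $\Delta>0$ the contribution vanishes for all $z\neq 0$, while when $\Delta=0$ one retains a factor $(\delta/|z|)^{R_N}$ with $\delta=\prod_i\alpha_i^{-\alpha_i}\prod_j\beta_j^{\beta_j}$, which decays precisely when $|z|<\delta$ -- yielding the two cases of the hypothesis.

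Once the contour-integral argument gives the identity \eqref{Fox_series}, analyticity of $H^{21}_{23}(z)$ in the stated region follows since the same Stirling bounds show that the series of residues converges locally uniformly: each term is an analytic function of $z$ (being $z^{-b_{jl}}$ times a constant, with the branch fixed by $z^{-s}=e^{-s\log z}$ on the principal branch), and local uniform convergence transfers analyticity to the sum. The sharpest step to watch is the horizontal-segment bound in the boundary case $\Delta=0$, where the cancellation between numerator and denominator Stirling exponentials is exact; there I would argue by the standard device of choosing $T_N$ along a sequence avoiding the imaginary parts of all poles and using the extra polynomial decay coming from the real-part Stirling factors.
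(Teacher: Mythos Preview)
The paper does not actually prove this theorem: it is stated in Section~\ref{section_Fox_asymptotics} as a known result, with a reference to Braaksma~\cite{Braa36} and to \cite[Sections 1.3 and 1.5]{KilbSaig04} for the details. Your contour-shifting argument via the residue theorem and Stirling asymptotics is precisely the classical proof that appears in those references, and the structural parameters you identify are the right ones: with the concrete data from \eqref{fund_solution_Z2}--\eqref{fund_solution_Y2} one has $\mu:=\sum_{j=1}^{3}\beta_j-\sum_{i=1}^{2}\alpha_i=\beta-\alpha$, which accounts for the dichotomy $\beta>\alpha$ versus $\beta=\alpha$ in the hypothesis, and the exponential-decay parameter governing the horizontal segments is $a^\ast=\beta_1+\beta_2+\alpha_1-\beta_3-\alpha_2=2-\alpha>0$, so those contributions indeed vanish. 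Your sketch is correct and matches the literature the paper defers to; there is nothing to compare against in the paper itself.
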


The asymptotic behavior of \(H^{21}_{23}(z)\) as \(z\to 0\) follows 
immediately from~\eqref{Fox_series} in the case \(\beta\ge\alpha\) by 
calculating the residues. If 
$0< \alpha < \beta$ and \(|\mathrm{arg}z|<\pi(1-\frac{\alpha}{2})\), then
\begin{equation}\label{Fox_series_ab}
H^{21}_{23}(z)\sim -\sum_{j=1}^2\sum_{l=0}^\infty 
\mathrm{Res}_{s=-b_{jl}}\big[\mathcal{H}^{21}_{23}(-s)z^{s}\big],
\end{equation}
when $z \to 0$. Again, the asymptotic behavior follows immediately by 
calculating the residues. 

The asymptotic behavior at infinity is more 
complicated to derive. For details we refer to~\cite{Braa36} and 
~\cite[Sections 1.3 and 1.5]{KilbSaig04}.

\begin{theorem}\label{Braa36}
The asymptotic expansion at infinity of the Fox H-function \(H_{23}^{21}(z)\) 
has the form
\begin{equation}\label{Fox_series_infinity}
H_{23}^{21}(z)\sim\sum_{k=0}^\infty h_{k}z^{-k},
\end{equation}
where the constants \(h_k\) have the form
\begin{equation}\label{h_k}
\begin{split}
h_k&=\lim_{s\to a_{1k}}\big[-(s-a_{1k})\mathcal{H}^{21}_{23}(s)\big]\\
&=\frac{(-1)^k}{k!\alpha_1}\frac{\Gamma(b_1+(1-a_1+k)\frac{\beta_1}{\alpha_1}
)\Gamma(b_2+(1-a_1+k)\frac{\beta_2}{\alpha_1})}{\Gamma(a_2+(1-a_1+k)\frac{
\alpha_2}{\alpha_1})\Gamma(1-b_3-(1-a_1+k)\frac{\beta_3}{\alpha_1})}
\end{split}
\end{equation}
in view of the relation
\[
\mathrm{Res}_{s=a_{1k}}\big[\mathcal{H}^{21}_{23}(s)z^{-s}\big]=h_kz^{-a_{1k}}
=h_kz^{(a_1-1-k)/\alpha_1}.
\]
\end{theorem}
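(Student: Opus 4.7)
The plan is to prove the expansion by translating the Mellin--Barnes contour $\mathcal{L}$ in \eqref{mellin_barnes_integral} far to the right and collecting the residues crossed in the process. As $|z|\to\infty$ inside an appropriate sector, the factor $z^{-s}=e^{-s\log z}$ decays rapidly when $\operatorname{Re} s$ is large, so shifting $\mathcal{L}$ to a vertical line $\operatorname{Re} s=R_N$ with $R_N\to\infty$ converts the integral into a finite sum of residues plus a remainder which is $O(|z|^{-R_N})$.

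Because $m=2$ and $n=1$ for our function, the only poles of $\mathcal{H}_{23}^{21}(s)$ lying to the right of $\mathcal{L}$ come from the single factor $\Gamma(1-a_1-\alpha_1 s)$, located at $s=a_{1k}=(1-a_1+k)/\alpha_1$ for $k=0,1,2,\ldots$. For generic parameter values these are simple poles, disjoint from the poles of the denominator factors and from the $b_{jl}$ of \eqref{poles_b_jl}. Using the classical identity $\mathrm{Res}_{s=a_{1k}}\Gamma(1-a_1-\alpha_1 s)=(-1)^k/(k!\,\alpha_1)$ and evaluating the remaining four Gamma factors at $s=a_{1k}$ yields precisely $h_k z^{-a_{1k}}$ with $h_k$ given as in \eqref{h_k}; summing over $k$ produces the claimed series.

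The main obstacle, and where the real analytic work lies, is bounding the remainder along the translated line $\operatorname{Re} s=R_N$. Here one combines Stirling's asymptotic $|\Gamma(\sigma+it)|\sim \sqrt{2\pi}\,|t|^{\sigma-1/2}e^{-\pi|t|/2}$ with the reflection formula to estimate each of the five Gamma factors comprising $\mathcal{H}_{23}^{21}$; the net effect is a uniform bound of the form $|\mathcal{H}_{23}^{21}(\sigma+it)|\lesssim |t|^{\nu}e^{-\mu\pi|t|/2}$ on vertical strips, where $\mu=\beta_1+\beta_2+\alpha_1-\beta_3-\alpha_2$ and $\nu$ is an explicit exponent depending on the parameters. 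This exponential decay forces the expansion to be valid only in the sector $|\arg z|<\mu\pi/2$; verifying $\mu>0$ for the two concrete parameter choices appearing in \eqref{fund_solution_Z2} and \eqref{fund_solution_Y2} is a direct computation. Once the bound is in place, the remainder integral converges absolutely and is $O(|z|^{-R_N})$, so letting $N\to\infty$ produces the Poincar\'e asymptotic expansion \eqref{Fox_series_infinity}. The full verification of these contour shifts and the precise determination of the sector of validity is classical and carried out in \cite{Braa36} and \cite[Sections 1.3 and 1.5]{KilbSaig04}, so we refer the reader there for the technical details.
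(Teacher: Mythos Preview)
Your sketch is correct and aligns with the paper's treatment: the paper does not supply its own proof of this theorem but merely cites \cite{Braa36} and \cite[Sections 1.3 and 1.5]{KilbSaig04}, which carry out exactly the contour-shifting argument you outline. One small slip: the residue $\mathrm{Res}_{s=a_{1k}}\Gamma(1-a_1-\alpha_1 s)$ is $-(-1)^k/(k!\,\alpha_1)$ (the chain-rule factor is $-\alpha_1$), but this sign is absorbed by the clockwise orientation when translating $\mathcal{L}$ to the right, so your final identification with $h_k$ is correct.
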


\section{Behavior of the fundamental solutions}

We start by showing some basic properties of the fundamental solutions $Z$ and 
$Y$. The first lemma provides an important connection between the functions $Z$ and 
$Y$. Note, in particular, that \(Z\) and \(Y\) are identical in the case 
\(\alpha=1\). 

\begin{lemma}\label{ZY_connection}
The fundamental solutions \(Z\) and \(Y\) of equation~\eqref{prob} are 
connected via $Y=\partial_t^{1-\alpha}Z$.

\begin{proof}
Observe first that
\[
\partial_t^{1-\alpha}f(at)=a^{1-\alpha}(\partial_t^{1-\alpha}f)(at)
\]
for a sufficiently smooth function $f$ and for a constant $a \in \R_+$.

Now we combine this with Lemma~\ref{Fox_properties} (iv) to obtain
\begin{align*}
&\partial_t^{1-\alpha} H_{32}^{12}\bigl[ 2^\beta 
t^{\alpha}|x|^{-\beta} \big| \begin{smallmatrix} (1-\frac{d}{2},\beta/2), 
&(0,1), 
&(0,\beta/2)\\ (0,1),& (0,\alpha )& 
\end{smallmatrix} \big] \\
&=
t^{\alpha-1}H_{43}^{13}\bigl[ 2^\beta 
t^{\alpha}|x|^{-\beta} \big| \begin{smallmatrix} (0,\alpha), &(1-\frac{d}{2},\beta/2), 
&(0,1), 
&(0,\beta/2)\\ (0,1),& (0,\alpha ), &(1-\alpha, \alpha)
\end{smallmatrix} \big].
\end{align*}
We need to study the Mellin transform of the above Fox $H$-function. That is
\begin{align*}
\mathcal{H}_{43}^{13}(s)&=\frac{\Gamma(s)\Gamma(1-\alpha s)\Gamma\l(\frac{d}{2}-\frac{\beta}{2}s\r)\Gamma(1-s)}{\Gamma\l(\frac{\beta}{2}s\r)\Gamma(1-\alpha s)\Gamma(\alpha - \alpha s)} \\
&=\frac{\Gamma(s)\Gamma\l(\frac{d}{2}-\frac{\beta}{2}s\r)\Gamma(1-s)}{\Gamma\l(\frac{\beta}{2}s\r)\Gamma(\alpha - \alpha s)}=\mathcal{H}_{32}^{12}(s)
\end{align*}
We obtain
\begin{align*}
Y(t,x)&=\pi^{-d/2}\abs{x}^{-d}t^{\alpha-1}H_{32}^{12}\bigl[ 2^\beta 
t^{\alpha}|x|^{-\beta} \big| \begin{smallmatrix} (1-\frac{d}{2},\beta/2), 
&(0,1), 
&(0,\beta/2)\\ (0,1),& (1-\alpha,\alpha )& 
\end{smallmatrix} \big] \\
&=\partial_t^{1-\alpha} Z(t,x),
\end{align*}
as required.
\end{proof}
\end{lemma}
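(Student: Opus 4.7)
The plan is to differentiate $Z(t,x)$ in time using the Fox $H$-function differentiation formula of Lemma \ref{Fox_properties}(iv), and then match the resulting expression with the explicit definition of $Y(t,x)$ in \eqref{fund_solution2}. Since the prefactor $\pi^{-d/2}|x|^{-d}$ is independent of $t$, the task reduces to computing $\partial_t^{1-\alpha}$ of $H_{32}^{12}\bigl[2^\beta t^\alpha|x|^{-\beta}\bigm|\cdots\bigr]$.

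First I would observe that although property (iv) is stated for an argument of the form $t^\rho$, a positive multiplicative constant $c$ inside the argument causes no trouble: writing $g(t)=H[t^\alpha|\cdots]$ one has $H[ct^\alpha|\cdots]=g(c^{1/\alpha}t)$, and the elementary scaling $\partial_t^{1-\alpha}[g(at)]=a^{1-\alpha}(\partial_t^{1-\alpha}g)(at)$ shows that (iv) still applies with $c=2^\beta|x|^{-\beta}$, $\rho=\alpha$, and $\sigma=1-\alpha$. This produces a prefactor $t^{\alpha-1}$ together with an enlarged Fox $H$-function $H_{43}^{13}$ whose top row is $(0,\alpha),(1-d/2,\beta/2),(0,1),(0,\beta/2)$ and whose bottom row is $(0,1),(0,\alpha),(1-\alpha,\alpha)$.

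The next step is a Mellin-transform comparison via \eqref{mellin_transform}. The newly added top parameter $(0,\alpha)$ contributes a factor $\Gamma(1-\alpha s)$ to the numerator, while the pre-existing bottom parameter $(0,\alpha)$ contributes exactly the same factor $\Gamma(1-\alpha s)$ to the denominator. After cancellation, the Mellin transform of the enlarged $H$-function reduces to
\[
\frac{\Gamma(s)\,\Gamma\bigl(\tfrac{d}{2}-\tfrac{\beta}{2}s\bigr)\,\Gamma(1-s)}{\Gamma\bigl(\tfrac{\beta}{2}s\bigr)\,\Gamma(\alpha-\alpha s)},
\]
which is precisely $\mathcal{H}_{32}^{12}(s)$ for the parameter choice appearing in $Y$. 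Multiplying back by $\pi^{-d/2}t^{\alpha-1}|x|^{-d}$ then yields \eqref{fund_solution2}, and hence $\partial_t^{1-\alpha}Z=Y$.

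I expect the main obstacle to be purely bookkeeping: one must check that the Mellin--Barnes contour $\mathcal L$ can be chosen consistently for both the enlarged $H_{43}^{13}$ and the reduced $H_{32}^{12}$, so that cancelling the two $\Gamma(1-\alpha s)$ factors is analytically legitimate rather than merely formal. Since the poles of the cancelled factor lie at $s=(1+k)/\alpha$ and sit on the same side of $\mathcal L$ in both representations, this is a routine verification. The genuine mathematical content is the observation that Riemann--Liouville fractional differentiation of the Fox $H$-function merely adjoins a matched pair of parameters, whose $\Gamma$-factors cancel to leave only the shift $(0,\alpha)\to(1-\alpha,\alpha)$ in the bottom row, which is exactly the algebraic difference between the representations of $Z$ and $Y$.
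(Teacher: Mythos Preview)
Your proposal is correct and follows essentially the same route as the paper: apply the scaling identity for $\partial_t^{1-\alpha}$, invoke Lemma~\ref{Fox_properties}(iv) to pass to an $H_{43}^{13}$, and then cancel the matching $\Gamma(1-\alpha s)$ factors in the Mellin transform to recover the $H_{32}^{12}$ representation of $Y$. Your added remark about the consistency of the Mellin--Barnes contour is a welcome bit of extra care that the paper leaves implicit.
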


Before moving into providing the exact behavior of the fundamental solutions $Z$ and $Y$, we give the following remark.

\begin{remark}\label{nonnegative}
Observe that the functions $Z$ and $Y$ are both non-negative and, moreover, $Z$ induces a probability measure. 

Indeed, by Bochner's Theorem the non-negativity follows from showing that the Fourier transforms 
\(\widehat{Z}(t,\cdot)\) and \(\widehat{Y}(t,\cdot)\) are positive definite on \(\Rn\) \cite{Boch12}. Recalling that \(\widehat{Z}(t,\cdot)\) 
and \(\widehat{Y}(t,\cdot)\) can be represented in terms of the 
Mittag-Leffler functions \(E_{\alpha,1}\) and \(E_{\alpha,\alpha}\), for the positive definiteness it is enough to show that the functions \(f(r)=E_{\alpha,1}(-t^\alpha r^{\frac{\beta}{2}})\) and 
\(g(r)=E_{\alpha,\alpha}(-t^{\alpha} r^{\frac{\beta}{2}})\) are completely 
monotone on \(\mathbb{R}_+\)~\cite[Theorem 3]{Scho38}. But since the functions \(x\mapsto 
E_{\alpha,1}(-x)\), \(x\mapsto E_{\alpha,\alpha}(-x)\) and \(x\mapsto 
cx^{\frac{\beta}{2}-1}\) with \(c\ge 0\) and \(\beta\le 2\) are known to be 
completely monotone on \(\mathbb{R}_+\)~\cite{MillSamk01}, we obtain the result. 

Finally, by~\eqref{fund_fourier_trans} we have
\[
\int_{\Rn} Z(t,x) \, dx = \widehat{Z}(t,0)= E_{\alpha,1}(0)=1,
\]
for every $t>0$, which yields that $Z(t, \cdot) \ge 0$ induces a probability measure on $\Rn$. 

\end{remark}

When proving the decay estimates we will need the following asymptotic estimates 
for the fundamental solutions. We begin by studying the function \(Z\).

\begin{lemma}\label{fund_sol_asympt}

Let $d \in \Z_+$, $0<\alpha \le 1$ and $0< \beta \le 2$. Denote \(R:=|x|^\beta 
t^{-\alpha}\). Then the function $Z$ has the following asymptotic behavior:
\begin{itemize}

\item[(i)] If \(R\leq 1\), then
\[
Z(t,x)\sim \begin{cases}
             t^{-\alpha d/\beta}, &\textrm{if $\alpha=1$, or $\beta > d$ and 
$0<\alpha<1$},\\
t^{-\alpha}(|\log(|x|^\beta t^{-\alpha})|+1), &\textrm{if $\beta=d$ and  
$0<\alpha <1$},\\
t^{-\alpha}|x|^{-d+\beta} &\textrm{if $0<\beta < d$ and $0<\alpha<1$}.
            \end{cases}
\]

\item[(ii)] If \(R\ge 1\), then 
\[
Z(t,x)\sim t^{\alpha}|x|^{-d-\beta},\quad \textrm{if \(\beta<2\)}.
\]
In the special case $\beta=2$ there holds
\[
Z(t,x)\lesssim t^{\alpha}|x|^{-d-2}.
\]

\end{itemize}

\begin{proof} {\em \((i)\) \(R\le 1\)}: We 
start with the case 
\(0<\alpha<1\). Since the asymptotic behavior depends on whether 
\(\beta\ge\alpha\) or \(\beta<\alpha\), we have study different subcases. First 
of all, recall the definition of $Z$ as
\[
Z(t,x)=\pi^{-d/2}|x|^{-d}H_{23}^{21}\bigl[ 2^{-\beta} 
t^{-\alpha}|x|^{\beta} \big| \begin{smallmatrix} (1,1),& (1,\alpha )&  \\
(\frac{d}{2},\frac{\beta}{2}), 
&(1,1), 
&(1,\frac{\beta}{2})
\end{smallmatrix} \big].
\]
In order to figure out the asymptotic behavior of $Z$, we need to study the 
above Fox $H$-function. As it was mentioned in 
Section~\ref{section_Fox_asymptotics}, the asymptotic behavior follows by 
calculating the residues. We provide the details for the reader's convenience.
 
\noindent{\em The subcase \(\beta\ge \alpha\)}: We have
\begin{equation}\label{H_expansion}
H^{21}_{23}(z)=\sum_{j=1}^2\sum_{l=0}^\infty 
\mathrm{Res}_{s=b_{jl}}\big[\mathcal{H}^{21}_{23}(s)z^{-s}\big]
\end{equation}
by Theorem~\ref{fox_series_expansion}. Recall the definition of the 
Mellin transform
\begin{align*}
\mathcal{H}^{21}_{23}(s):=& \mathcal{M}(H^{21}_{23}\big[ z \big| 
\begin{smallmatrix}
       (1,1),&(1,\alpha) &\\
(\frac{d}{2},\frac{\beta}{2}), &(1,1), &(1,\frac{\beta}{2})
      \end{smallmatrix}
\big])(s)\\
=&\frac{\Gamma(\frac{d}{2}+\frac{\beta}{2}s)\Gamma(1+s)\Gamma(-s)}{
\Gamma(1+\alpha s)\Gamma(-\frac{\beta}{2}s)}.
\end{align*}
In light of~\eqref{H_expansion}, the asymptotic behavior is determined by the 
largest value of \(s\), which is a pole of $\mathcal{H}^{21}_{23}(s)$. Now, for 
$0<\alpha <1$ the above Mellin transform has poles at $s=-1$ and 
$s=-d/\beta$. Suppose first that $\beta > d$. The only value for \(d\) to this 
happen is \(d=1\), when \(1<\beta\le 2\), whereas for \(0<\beta\le 1\) there 
is no such \(d\). Then the asymptotics is determined 
by the pole at 
$-1/\beta$ and the behavior of the  $H_{21}^{23}$ near zero is $H_{21}^{23}(z) 
\sim z^{1/\beta}$. This yields
\[
Z(t,x) \sim t^{-\alpha /\beta},
\]
as required. 

Assume next that $\alpha\le\beta < d$. Then the largest value of \(s\) 
such that the Mellin transform has a pole is $s=-1$ and we obtain 
$H_{21}^{23}(z) \sim z$. This produces
\[
Z(t,x) \sim t^{-\alpha}|x|^{-d+\beta}.
\]

In the case $\beta=d$ the Mellin transform has a second order pole at 
$s=-1$. Then the residue can be calculated as
\begin{align*}
\mathrm{Res}_{s=-1}\big[\mathcal{H}^{21}_{23}(s)z^{-s}\big] &=\lim_{s\to -1} \frac{d}{ds}[(s+1)^2\mathcal{H}^{21}_{23}(s)z^{-s}] \\
&=z \lim_{s\to -1} 
\frac{d}{ds}[(1+s)^2\mathcal{H}^{21}_{23}(s)]\\
&\quad+\lim_{s\to 
-1}[(1+s)^2\mathcal{H}^{21}_{23}(s) \frac{d}{ds}(z^{-s})].
\end{align*}
Since \((1+s)\Gamma(1+s)=\Gamma(2+s)\) and \( 
\frac{d}{2}(1+s)\Gamma(\frac{d}{2}+\frac{\beta}{2}s)=\Gamma(\frac{d}{2}+1+\frac
{\beta}{2}s)\) are analytic at \(s=-1\), the limits
\[
\lim_{s\to -1} 
\frac{d}{ds}[(1+s)^2\mathcal{H}^{21}_{23}(s)]\quad\textrm{and}\quad\lim_{s\to 
-1}(1+s)^2\mathcal{H}_{23}^{21}(s)
\]
exist. Moreover, since
\[
\lim_{s\to -1} \frac{d}{ds}(z^{-s}) = z \log z,
\]
we may conclude that in this case $H^{21}_{23}(z) \sim z \log z$ and thus
\[
Z(t,x) \sim t^{-\alpha}(|\log(|x| t^{-\alpha})|+1),
\]  
again as required. 

{\em The subcase \(\beta<\alpha\)}: Since we are interested on the 
asymptotics of the Fox \(H\)-function for \(z\in\R_+\), the asymptotics is 
given by~\eqref{Fox_series_ab}. Because $0<\beta <\alpha \le 1 \le d$, we 
have $d/\beta > 1$ and the leading term is determined by
\[
\mathrm{Res}_{s=1}[\mathcal{H}_{23}^{21}(-s)z^{s}].
\]
Therefore
\[
Z(t,x) \sim t^{-\alpha}|x|^{-d+\beta}
\]

In the special case \(\alpha=1\) we see that the Mellin transform of 
\(H^{21}_{23}(z)\) reduces to
\[
\mathcal{H}^{21}_{23}(s)=\frac{\Gamma(\frac{d}{2}+\frac{\beta}{2}s)\Gamma(-s)}{
\Gamma(-\frac{\beta}{2})}.
\]

Therefore the asymptotics is given by the pole at 
\(s=-\frac{d}{\beta}=-\frac{\alpha d}{\beta}\). 
Proceeding as above we end up to the desired estimate.

{\em \((ii)\) \(R\ge 1\)}: We use the 
asymptotic behavior of the Fox 
$H$-functions provided by Theorem~\ref{Braa36}:
\[
H_{23}^{21}(z)\sim\sum_{k=0}^\infty h_{k}z^{-k},
\]
for constants $h_k$ defined in~\eqref{h_k}. We aim to find the smallest value of 
$k$ such that $h_k \neq 0$. Let's first study the case $0 < \beta <2$. Now
\begin{equation}\label{h_k_calc1}
h_{0}=\frac{\Gamma(\frac{d}{2})\Gamma(1)}{\Gamma(1)\Gamma(0)}=0\quad\text{and}
\quad
h_{1}=-\frac{\Gamma(\frac{d}{2}+\frac{\beta}{2})\Gamma(2)}{
\Gamma(1+\alpha)\Gamma(-\frac
{\beta}{2})}\neq 0.
\end{equation}
Therefore the leading term in the expansion~\eqref{Fox_series_infinity} is 
\(h_1 z^{-1}\) so
\[
H^{21}_{23}(z)\sim z^{-1},\quad z\to\infty
\]
and we obtain the claim of the lemma.

If \(\beta=2\), we see from~\eqref{h_k_calc1} that \(h_0=0\) and \(h(1)=0\), 
since 
\[
\Gamma(-\frac{\beta}{2})=\Gamma(-1)=\infty. 
\]
Therefore the claim is 
true also in this case. However, we can continue to deduce \(h_k=0\) for all 
\(k\in\Z_+\). One can prove that now actually \(Z(t,x)\) decays 
in terms of \(R\to\infty\), but we do not need that fact in our 
considerations.
\end{proof}
\end{lemma}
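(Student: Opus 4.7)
My plan is to read off the asymptotics of $Z(t,x)=\pi^{-d/2}|x|^{-d}H_{23}^{21}(z)$ (with $z=R=|x|^\beta t^{-\alpha}$) directly from the Mellin--Barnes integral representation and the residue theorems recalled in Section~\ref{section_Fox_asymptotics}. The Mellin transform in this case is
\[
\mathcal{H}_{23}^{21}(s)=\frac{\Gamma(\tfrac{d}{2}+\tfrac{\beta}{2}s)\,\Gamma(1+s)\,\Gamma(-s)}{\Gamma(1+\alpha s)\,\Gamma(-\tfrac{\beta}{2}s)},
\]
and its pole structure is what drives everything.

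For part (i) (the case $R\le1$, i.e.\ $z\to 0$), when $\beta\ge\alpha$ I would apply Theorem~\ref{fox_series_expansion} to sum residues at the left poles $b_{jl}$. The candidates closest to the origin are $s=-1$ from $\Gamma(1+s)$ and $s=-d/\beta$ from $\Gamma(\tfrac{d}{2}+\tfrac{\beta}{2}s)$. If $\beta>d$ (forcing $d=1$, $1<\beta\le 2$) the pole at $-d/\beta=-1/\beta$ is nearer, so $H(z)\sim C z^{d/\beta}$ and the $|x|^{-d}$ prefactor cancels to leave $Z\sim t^{-\alpha d/\beta}$. If $\beta<d$ the pole at $s=-1$ dominates, giving $H(z)\sim z$ and $Z\sim t^{-\alpha}|x|^{\beta-d}$. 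When $\beta=d$ the two poles collide into a second-order pole at $s=-1$; I would compute the Laurent expansion via $\mathrm{Res}_{s=-1}=\lim_{s\to-1}\tfrac{d}{ds}[(1+s)^2\mathcal{H}_{23}^{21}(s)z^{-s}]$, observing that the factor $\tfrac{d}{ds}z^{-s}=-z^{-s}\log z$ produces the $(|\log(|x|^\beta t^{-\alpha})|+1)$ term, after which the residue is a linear combination of $z$ and $z\log z$, as claimed. For $\beta<\alpha$ the convergence hypothesis for Theorem~\ref{fox_series_expansion} fails, but the asymptotic formula~\eqref{Fox_series_ab} still applies; since $0<\beta<\alpha\le 1\le d$ forces $d>\beta$, the leading residue is the same as above and delivers $Z\sim t^{-\alpha}|x|^{\beta-d}$. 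Finally, the $\alpha=1$ case simplifies because $\Gamma(1+s)$ and $\Gamma(1+\alpha s)$ cancel, leaving only the pole at $s=-d/\beta$ to contribute, giving $Z\sim t^{-d/\beta}$ uniformly.

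For part (ii) (the case $R\ge 1$, i.e.\ $z\to\infty$), I would invoke Theorem~\ref{Braa36}, noting that $(a_1,\alpha_1)=(1,1)$ so $a_{1k}=k$. Direct substitution of our $\mathcal{H}_{23}^{21}$ into \eqref{h_k} yields
\[
h_k=\frac{(-1)^k}{k!}\,\frac{\Gamma(\tfrac{d}{2}+\tfrac{\beta}{2}k)\,\Gamma(1+k)}{\Gamma(1+\alpha k)\,\Gamma(-\tfrac{\beta}{2}k)}.
\]
The term $\Gamma(0)=\infty$ in the denominator kills $h_0$, and for $\beta\in(0,2)$ the value $\Gamma(-\beta/2)$ is finite and nonzero, so $h_1\neq 0$ and $H(z)\sim h_1 z^{-1}$, producing the claimed $Z(t,x)\sim t^\alpha |x|^{-d-\beta}$. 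In the borderline case $\beta=2$, every $\Gamma(-\beta k/2)=\Gamma(-k)$ with $k\ge 1$ is infinite, so the whole formal series vanishes; I would conclude that $H(z)$ decays faster than any negative power of $z$, which trivially gives the stated upper bound $Z\lesssim t^\alpha|x|^{-d-2}$.

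The main obstacle I anticipate is the critical case $\beta=d$, where the collision of two simple poles produces logarithmic behavior: one must expand each Gamma factor to one more order than usual and carefully isolate the coefficient of $\log z$ to verify the exact form $t^{-\alpha}(|\log(|x|^\beta t^{-\alpha})|+1)$. A secondary subtlety is justifying that the pointwise asymptotic expansion actually controls $H(z)$ and not merely gives a formal series; for this I would appeal to the estimates of the remainder terms in \cite{Braa36} and \cite{KilbSaig04}, which guarantee that the expansion~\eqref{Fox_series_infinity} is truly asymptotic and that the series~\eqref{Fox_series} and~\eqref{Fox_series_ab} converge (respectively are asymptotic) in the stated parameter regimes.
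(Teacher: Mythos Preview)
Your proposal is correct and follows essentially the same route as the paper's proof: you identify the Mellin transform $\mathcal{H}_{23}^{21}(s)$, locate the relevant left poles at $s=-1$ and $s=-d/\beta$ (handling their collision when $\beta=d$ via a second-order residue), split into the subcases $\beta\ge\alpha$ (Theorem~\ref{fox_series_expansion}) and $\beta<\alpha$ (formula~\eqref{Fox_series_ab}), treat $\alpha=1$ via the cancellation of $\Gamma(1+s)$ with $\Gamma(1+\alpha s)$, and for $R\ge1$ compute the coefficients $h_k$ from Theorem~\ref{Braa36} to find $h_0=0$, $h_1\neq 0$ for $\beta<2$, and $h_k\equiv 0$ for $\beta=2$. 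Your additional remarks about controlling the remainder and the care needed in the double-pole case are well placed but go slightly beyond what the paper records explicitly.
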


The next lemma gives the behavior of the fundamental solution $Y$. The proof 
is similar to the previous lemma.

\begin{lemma}\label{fund_sol_asympt_Y}
Let $d \ge 1$, $0<\alpha \le 1$ and $0<\beta \le 2$. Denote \(R:=|x|^\beta 
t^{-\alpha}\). Then the function $Y$ has 
the following asymptotic behavior:
\begin{itemize}
\item[(i)] If \(R\leq 1\), then
\[
Y(t,x)\sim \begin{cases}
  t^{-\alpha-1}           |x|^{-d+2\beta} , &\textrm{if $d>2\beta$ and 
$0<\alpha<1$},\\
t^{-\alpha-1}|\log(2^{-\beta}|x|^\beta t^{-\alpha})|, 
&\textrm{if $d=2\beta$ and $0<\alpha<1$},\\
t^{\alpha-1-\frac{\alpha d}{\beta}}, &\textrm{if $\alpha=1$, or $d<2\beta$ and 
$0<\alpha<1$}.
            \end{cases}
\]

\item[(ii)] If \(R\ge 1\), then
\[
Y(t,x)\sim t^{2\alpha-1}|x|^{-d-\beta},\quad\textrm{if $0<\beta<2$}.
\]
In the special case $\beta=2$ there holds
\[
Y(t,x)\lesssim t^{2\alpha-1}|x|^{-d-2}.
\]

\end{itemize}
\begin{proof}
The proof is similar to that of Lemma~\ref{fund_sol_asympt}. We omit the 
details. Once again, notice that in the special case \(\beta=2\) the 
function \(Y\) has indeed exponential decay as \(R\to\infty\) but we do not 
need that fact in our calculations.
\end{proof}
\end{lemma}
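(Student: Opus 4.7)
The plan is to mirror the Mellin–Barnes analysis carried out in the proof of Lemma~3.1 for $Z$, replacing the Fox $H$-function from \eqref{fund_solution_Z2} with that of \eqref{fund_solution_Y2}. The starting point is the representation
\[
Y(t,x)=\pi^{-d/2}t^{\alpha-1}|x|^{-d}H_{23}^{21}\bigl[z\bigr],\qquad z=2^{-\beta}t^{-\alpha}|x|^{\beta},
\]
whose associated Mellin transform is
\[
\mathcal{H}^{21}_{23}(s)=\frac{\Gamma\!\left(\tfrac{d}{2}+\tfrac{\beta}{2}s\right)\Gamma(1+s)\Gamma(-s)}{\Gamma(\alpha+\alpha s)\,\Gamma\!\left(-\tfrac{\beta}{2}s\right)}.
\]
The crucial difference compared to the computation for $Z$ is that the denominator now carries $\Gamma(\alpha(1+s))$ instead of $\Gamma(1+\alpha s)$, and this alters which poles of $\Gamma(1+s)$ survive. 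Specifically, near $s=-1$ one has $\Gamma(1+s)\sim (1+s)^{-1}$ and $\Gamma(\alpha(1+s))\sim (\alpha(1+s))^{-1}$, so the simple pole of $\Gamma(1+s)$ at $s=-1$ is cancelled. An analogous (removable) cancellation occurs at $s=0$ between $\Gamma(-s)$ and $\Gamma(-\tfrac{\beta}{2}s)$. I would record these two cancellations as a separate preliminary observation, since they are what distinguishes the behavior of $Y$ from that of $Z$.

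For the region $R\le 1$ (that is, $z\to 0^{+}$), I would invoke Theorem~3.3 (in the regime $\beta\ge\alpha$) or, for $\beta<\alpha$, the reflected expansion \eqref{Fox_series_ab} obtained from Lemma~2.11(ii), and then identify the largest left pole $s_{\ast}$ of $\mathcal{H}^{21}_{23}$. Given the cancellation at $s=-1$, the competing candidates are $s=-2$ (from $\Gamma(1+s)$) and $s=-d/\beta$ (from $\Gamma(\tfrac{d}{2}+\tfrac{\beta}{2}s)$). When $d>2\beta$ the dominant pole is $s_{\ast}=-2$, producing $H^{21}_{23}(z)\sim c\,z^{2}$ and hence $Y\sim t^{-\alpha-1}|x|^{-d+2\beta}$. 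When $d<2\beta$ the dominant pole is $s_{\ast}=-d/\beta$, giving $H^{21}_{23}(z)\sim c\,z^{d/\beta}$ and $Y\sim t^{\alpha-1-\alpha d/\beta}$. The case $\alpha=1$ is handled in the same way after first noting that $\Gamma(1+s)/\Gamma(\alpha+\alpha s)\equiv 1$, so that $s_{\ast}=-d/\beta$ unconditionally. The delicate case is $d=2\beta$, where the two candidate poles coalesce into a double pole at $s=-2$; the residue then produces a factor of $\log z$ via $\tfrac{d}{ds}(z^{-s})|_{s=-2}=-z^{2}\log z$, and the computation is analogous to the $\beta=d$ sub-case of Lemma~3.1, yielding $Y\sim t^{-\alpha-1}|\log(2^{-\beta}|x|^{\beta}t^{-\alpha})|$.

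For the region $R\ge 1$ (that is, $z\to\infty$), I would apply Theorem~3.4 with parameters $a_{1}=1$, $\alpha_{1}=1$, so that $a_{1k}=k$ and $H^{21}_{23}(z)\sim \sum_{k\ge 0} h_{k}z^{-k}$. Using \eqref{h_k} one computes
\[
h_{0}=\frac{\Gamma(d/2)\,\Gamma(1)}{\Gamma(\alpha)\,\Gamma(0)}=0,\qquad
h_{1}=-\frac{\Gamma\!\left(\tfrac{d}{2}+\tfrac{\beta}{2}\right)\Gamma(2)}{\Gamma(2\alpha)\,\Gamma\!\left(-\tfrac{\beta}{2}\right)},
\]
and for $\beta<2$ one has $\Gamma(-\beta/2)$ finite so that $h_{1}\ne 0$. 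Therefore $H^{21}_{23}(z)\sim h_{1}z^{-1}$ and
\[
Y(t,x)\sim t^{\alpha-1}|x|^{-d}\cdot z^{-1}\sim t^{2\alpha-1}|x|^{-d-\beta}.
\]
For $\beta=2$ all factors $\Gamma(-\beta k/2)=\Gamma(-k)$ in the denominator are infinite, so $h_{k}=0$ for every $k\ge 0$; the formal series vanishes, which only yields the upper bound $Y\lesssim t^{2\alpha-1}|x|^{-d-2}$ (in fact, as in Lemma~3.1(ii), one expects exponential decay, but as the authors note this sharper fact is not required).

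The main obstacle I anticipate is the bookkeeping of pole cancellations, particularly at $s=-1$ and the transition $d=2\beta$ where the double pole produces the logarithm. Writing out the residue of a double pole of $\mathcal{H}^{21}_{23}(s)z^{-s}$ and verifying that its non-logarithmic part is a lower-order contribution (so that $|\log z|$ dominates as $z\to 0^+$) requires some care, entirely analogous to the treatment of the $\beta=d$ case for $Z$. The remaining steps are routine residue calculus, which is why the authors are content to defer to the proof of Lemma~3.1.
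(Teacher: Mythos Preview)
Your proposal is correct and follows exactly the approach the paper intends: it reproduces the residue analysis of Lemma~\ref{fund_sol_asympt} with the Mellin transform attached to \eqref{fund_solution_Y2}, and you have correctly isolated the single new feature, namely that the denominator factor $\Gamma(\alpha(1+s))$ cancels the simple pole of $\Gamma(1+s)$ at $s=-1$, which is precisely why the competing left poles become $s=-2$ and $s=-d/\beta$ and the threshold shifts from $d=\beta$ to $d=2\beta$. Your treatment of the double pole at $d=2\beta$, the $\alpha=1$ reduction, and the computation of $h_0,h_1$ for $R\ge 1$ are all accurate; the paper omits these details but would carry them out identically.
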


Next we turn to study the behavior of the derivatives of $Z$ and $Y$.

\begin{lemma}\label{G_gradient}
Let $d \ge 1$, $0<\alpha \le 1$ and $0<\beta \le 2$. Denote \(R:=|x|^\beta 
t^{-\alpha}\). Then the derivatives of the fundamental solution pair $(Z,Y)$ 
have the following asymptotic behavior:

\begin{itemize}

\item[(i)] For the function $Z$ we have
\begin{align*}
&|\nabla Z(t,x)| \sim              t^{-\alpha}|x|^{-d-1+\beta}, \quad \text{if}\ \ R \le 1  \\
&|\nabla Z(t,x)|\sim t^{\alpha}|x|^{-d-1-\beta}, \quad \text{if}\ \ R \ge 1.  
\end{align*}

\item[(i)] For $Y$ we have for $R \le 1$ that
\begin{align*}
|\nabla Y(t,x)| &\sim \begin{cases}
  t^{-\alpha-1}           |x|^{-d-1+2\beta} , &\textrm{if $d+2>2\beta$ and 
$0<\alpha<1$},\\
t^{-\alpha-1}|x\log(2^{-\beta}|x|^\beta t^{-\alpha})|, 
&\textrm{if $d+2=2\beta$ and $0<\alpha<1$},\\
t^{\alpha-1-\frac{\alpha (d+2)}{\beta}}|x|, &\textrm{if $\alpha=1$ or 
$d+2<2\beta$},
            \end{cases} \\
            \text{and}\qquad\qquad& \\
|\nabla Y(t,x)|&\sim t^{2\alpha-1}|x|^{-d-1-\beta}, \quad \text{if} \ \ R \ge 1. 
\end{align*}

\item[(ii)] In addition, for the time derivative of $Y$ we have
\begin{align*}
&|\partial_t Y(t,x)| \sim              t^{-1}Y(t,x),  \quad \text{if}\ \ R \le 1, \\
&|\partial_t Y(t,x)| \sim              t^{2\alpha-2}|x|^{-d-\beta},  \quad \text{if}\ \ R \ge 1.
\end{align*}

\end{itemize}
\begin{proof}
We provide the calculations only for the gradient of the function $Z$. The other cases are handled similarly, but we omit the details. 

Recall the expression for the fundamental solution $Z$:
\[
Z(t,x)=\pi^{-d/2}|x|^{-d}H_{32}^{12}\bigl[ 2^\beta 
t^{\alpha}|x|^{-\beta} \big| \begin{smallmatrix} (1-\frac{d}{2},\beta/2), 
&(0,1), 
&(0,\beta/2)\\ (0,1),& (0,\alpha )& 
\end{smallmatrix} \big].
\]
First of all, we use Lemma~\ref{Fox_properties} (ii) to write the above Fox $H$-function as
\[
H_{32}^{12}\bigl[ 2^\beta 
t^{\alpha}|x|^{-\beta} \big| \begin{smallmatrix} (1-\frac{d}{2},\beta/2), 
&(0,1), 
&(0,\beta/2)\\ (0,1),& (0,\alpha )& 
\end{smallmatrix} \big]
=
 H_{23}^{21}\bigl[ 2^{-\beta} 
t^{-\alpha}|x|^{\beta} \big| \begin{smallmatrix} (1,1), 
&(1, \alpha)\\ 
(\frac{d}{2},\frac{\beta}{2}),& (1,1 ), &(1, \frac{\beta}{2})
\end{smallmatrix} \big].
\]

According to Lemma~\ref{Fox_properties} (i), we have
\[
\frac{d}{d z}H^{21}_{23}\big[ z \big| \begin{smallmatrix}(1,1), 
&(1, \alpha)\\ 
(\frac{d}{2},\frac{\beta}{2}),& (1,1 ), &(1, \frac{\beta}{2})&  
\end{smallmatrix} \big] = z^{-1}H^{22}_{34}\big[ z \big| \begin{smallmatrix} 
(0,1),& (1,1), &(1, \alpha)\\ 
(\frac{d}{2},\frac{\beta}{2}),& (1,1 ), &(1, \frac{\beta}{2}), &(1,1) 
\end{smallmatrix} \big].
\]
Using the product rule for differentiation, we may now calculate
\[
\frac{\partial}{\partial x_j} Z(t, x)= \pi^{-d/2}\frac{x_j}{|x|^{d+2}}[\beta H_{34}^{22}(z)-dH_{23}^{21}(z)].
\]
For simplicity, we have here omitted the set of parameters inside the Fox $H$-functions. Next we analyse the above Fox $H$-functions by studying the corresponding Mellin transforms. We have
\begin{align*}
&\beta \mathcal{H}_{34}^{22}(s)-d\mathcal{H}_{23}^{21}(s) \\
&=\beta \frac{\Gamma(\frac{d}{2}+\frac{\beta}{2}s)\Gamma(1+s)\Gamma(1-s)}{\Gamma(1+\alpha s)\Gamma(-\frac{\beta}{2}s)} -d\frac{\Gamma(\frac{d}{2}+\frac{\beta}{2}s)\Gamma(1+s)\Gamma(-s)}{\Gamma(1+\alpha s)\Gamma(-\frac{\beta}{2}s)} \\
&=-2\l(\frac{d}{2}+\frac{\beta}{2}s\r)\frac{\Gamma(\frac{d}{2}+\frac{\beta}{2}s)\Gamma(1+s)\Gamma(-s)}{\Gamma(1+\alpha s)\Gamma(-\frac{\beta}{2}s)} \\
&=-2\frac{\Gamma(\frac{d+2}{2}+\frac{\beta}{2}s)\Gamma(1+s)\Gamma(-s)}{\Gamma(1+\alpha s)\Gamma(-\frac{\beta}{2}s)} \\
&=-2\mathcal{H}_{23}^{21}\big[ s \big| \begin{smallmatrix}(1,1), 
&(1, \alpha)\\ 
(\frac{d+2}{2},\frac{\beta}{2}),& (1,1 ), &(1, \frac{\beta}{2})&  \end{smallmatrix} \big].
\end{align*}
Thus we obtain
\[
\frac{\partial}{\partial x_j} Z(t, x)= -2\pi^{-d/2}\frac{x_j}{|x|^{d+2}}H_{23}^{21}\big[ 2^{-\beta} 
t^{-\alpha}|x|^{\beta} \big| \begin{smallmatrix}(1,1), 
&(1, \alpha)\\ 
(\frac{d+2}{2},\frac{\beta}{2}),& (1,1 ), &(1, \frac{\beta}{2})&  \end{smallmatrix} \big]
\]
and 
\[
|\nabla Z(t,x)|=2\pi^{-d/2}|x|^{-d-1}\l|H_{23}^{21}\big[ 2^{-\beta} 
t^{-\alpha}|x|^{\beta} \big| \begin{smallmatrix}(1,1), 
&(1, \alpha)\\ 
(\frac{d+2}{2},\frac{\beta}{2}),& (1,1 ), &(1, \frac{\beta}{2})&  \end{smallmatrix} \big]\r|.
\]
Now the result follows from the behavior of the Fox $H$-functions.
\end{proof}
\end{lemma}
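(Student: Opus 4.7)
The plan is to exploit the explicit Fox $H$-function representations \eqref{fund_solution_Z2} and \eqref{fund_solution_Y2}, differentiate them termwise using properties (i) and (iv) of Lemma \ref{Fox_properties}, and then read off the asymptotics from Theorems \ref{fox_series_expansion} and \ref{Braa36}. The strategy parallels the proofs of Lemmas \ref{fund_sol_asympt} and \ref{fund_sol_asympt_Y}: differentiation produces a new Fox $H$-function whose Mellin transform differs from the original by a shift in one of its Gamma factors, and the asymptotic behaviour is then extracted by locating the rightmost/leftmost poles and computing the corresponding residues.

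For $\nabla Z$ and $\nabla Y$: writing the fundamental solutions as $|x|^{-d}$ times a Fox $H$-function of $R=|x|^\beta t^{-\alpha}$ and applying the chain rule $\partial_{x_j}=(x_j/|x|)\,\partial_{|x|}$ together with property (i), one obtains a linear combination of two Fox $H$-functions of type $H^{21}_{23}$ and $H^{22}_{34}$. I expect the two Mellin transforms to combine cleanly via the Gamma identity $\beta\,\Gamma(\tfrac{d+2}{2}+\tfrac{\beta}{2}s)-d\,\Gamma(\tfrac{d}{2}+\tfrac{\beta}{2}s)=-2\,(\tfrac{d}{2}+\tfrac{\beta}{2}s)\Gamma(\tfrac{d}{2}+\tfrac{\beta}{2}s)=-2\Gamma(\tfrac{d+2}{2}+\tfrac{\beta}{2}s)$, collapsing the combination to a single Fox $H$-function whose first lower parameter is $\frac{d+2}{2}$ instead of $\frac{d}{2}$. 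Hence $|\nabla Z|$ and $|\nabla Y|$ obey exactly the asymptotics of $Z$ and $Y$ "in dimension $d+2$", with an extra factor $|x|^{-1}$ coming from the explicit $x_j/|x|^{d+2}$ prefactor. Feeding this shift into Lemmas \ref{fund_sol_asympt} and \ref{fund_sol_asympt_Y} immediately gives every case in (i): the logarithmic regime for $\nabla Y$ appears precisely when $d+2=2\beta$, where the rightmost pole of the shifted Mellin transform becomes second order, exactly as in the $\beta=d$ subcase for $Z$.

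For $\partial_t Y$: the cleanest route is self-similarity. Writing $Y(t,x)=t^{\alpha-1}|x|^{-d}F(R)$ with $R=t^{-\alpha}|x|^\beta$ and $F$ the Fox $H$-function in \eqref{fund_solution_Y2}, a direct computation gives
\[
\partial_t Y=(\alpha-1)\,t^{-1}Y-\alpha\,t^{-1}\cdot t^{\alpha-1}|x|^{-d}\,R F'(R).
\]
For $R\le 1$ the residue analysis of Theorem \ref{fox_series_expansion} shows that $RF'(R)$ has the same order as $F(R)$ (taking a derivative in $R$ multiplies residues by $-s$ at the pole $s$ and rescales), so the two contributions combine into $|\partial_t Y|\sim t^{-1}Y$. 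For $R\ge 1$, Theorem \ref{Braa36} gives $F(R)\sim h_1 R^{-1}$, hence $RF'(R)\sim -h_1 R^{-1}$, and the two terms combine to give $|\partial_t Y|\sim t^{\alpha-2}|x|^{-d}R^{-1}=t^{2\alpha-2}|x|^{-d-\beta}$. Equivalently, property (iv) of Lemma \ref{Fox_properties} applied with $\sigma=1$, $\rho=\alpha$ produces the same Fox $H$-function directly.

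The main obstacle is the bookkeeping at the borderline dimensions, especially $d+2=2\beta$ for $\nabla Y$, where a second-order pole forces the residue to be computed via $\tfrac{d}{ds}\bigl[(s-s_0)^2\mathcal{H}(s)z^{-s}\bigr]_{s=s_0}$, producing the $\log R$ factor; one must carefully check that this logarithm dominates the polynomial correction. A secondary technicality is the case $\beta=2$ at infinity, where several Gamma factors in the coefficients $h_k$ of \eqref{Fox_series_infinity} vanish, so only an upper bound is available (as already noted in Lemmas \ref{fund_sol_asympt} and \ref{fund_sol_asympt_Y}); the stated $\lesssim$ bounds are what this argument delivers.
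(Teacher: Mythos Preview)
Your approach is essentially identical to the paper's: differentiate the Fox $H$-representation via the product rule and property (i), combine the two resulting Mellin transforms into a single $\mathcal{H}_{23}^{21}$ with the first lower parameter shifted from $\tfrac{d}{2}$ to $\tfrac{d+2}{2}$, and then invoke the asymptotics already worked out for $Z$ and $Y$ in the ``shifted dimension'' $d+2$. The paper carries this out only for $\nabla Z$ and omits the remaining cases, so your additional remarks on $\partial_t Y$ via the self-similar form $Y=t^{\alpha-1}|x|^{-d}F(R)$ are a welcome supplement and lead to the same conclusion.

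One correction: the Gamma identity you wrote is garbled. The two Mellin transforms share the factor $\Gamma(\tfrac{d}{2}+\tfrac{\beta}{2}s)$; what differs is $\Gamma(1-s)$ in $\mathcal{H}_{34}^{22}$ versus $\Gamma(-s)$ in $\mathcal{H}_{23}^{21}$. The combination is
\[
\beta\,\Gamma(1-s)-d\,\Gamma(-s)=(-\beta s-d)\,\Gamma(-s)=-2\Bigl(\tfrac{d}{2}+\tfrac{\beta}{2}s\Bigr)\Gamma(-s),
\]
and it is this factor, multiplied against the common $\Gamma(\tfrac{d}{2}+\tfrac{\beta}{2}s)$, that produces $-2\,\Gamma(\tfrac{d+2}{2}+\tfrac{\beta}{2}s)\Gamma(-s)$. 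Your stated identity with $\Gamma(\tfrac{d+2}{2}+\tfrac{\beta}{2}s)$ on the left-hand side is false as written, but the intended mechanism and the conclusion are correct.
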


\section{Representation formula for solutions}

\subsection{Proof of Theorem~\ref{fundamental_solution_thm}}

We are now ready to prove Theorem~\ref{fundamental_solution_thm}, which 
justifies calling $(Z,Y)$ the matrix of fundamental solutions for the 
equation~\eqref{classical_equation}.

\begin{proof}
 
We need to show that the function
\begin{align*}
\Psi(t,x)&=\int_{\Rn} Z(t,x-y)u_0(y)\, dy+\int_0^t\int_{\Rn} Y(t-s,x-y)f(s,y)\, 
dy \, ds \\
&=:\Psi_1(t,x)+\Psi_2(t,x)
\end{align*}
is a classical solution to equation~\eqref{prob}. We divide the proof into 
three 
steps as there are three requirements in the definition of the classical 
solution.
\\

\noindent {\em Step I:} First we need to prove that $\mathcal{F}^{-1}_{\xi\to 
x}(|\xi|^\beta
\widehat{\Psi}(t, \xi))$ is a continuous function with respect to $x$ for each 
$t>0$. The representation~\eqref{fund_fourier_trans} and the asymptotic behavior of the Mittag-Leffler function given by~\eqref{ML2_estimate} give
\[
|\widehat Z(t,\xi)| \le \frac{C}{1+|\xi|^{\beta}t^\alpha}
\]
for $t  > 0$. Thus $|\xi|^\beta\widehat Z(t,\xi)$ is bounded for all $t>0$ and by using the assumption that $\widehat u_0 \in L^1$ we obtain  
\begin{equation}\label{Z_L1}
|\xi|^\beta\mathcal{F}_{x \to \xi}(Z \star 
u_0)(t,\cdot)=|\xi|^\beta\widehat{Z}(t,\xi)\widehat{u}_0(\xi) \in L^1(\Rn).
\end{equation}

In order 
to estimate $Y$, we use the assumption~\eqref{f_hat_assumption} to obtain
\[
|\widehat{f}(s, \xi)| \lesssim |g(\xi)|
\]
with $|\xi|^\beta g(\xi) \in L^1(\Rn)$. Combining this with~\eqref{Y_FT_estimate} yields
\begin{align*}
|\xi|^\beta\mathcal{F}(Y\, \hat* \, f)(t,\xi) 
&=|\xi|^\beta\int_0^t\widehat{Y}(t-s,\cdot)\widehat{f}(s,\cdot) \, ds \\
&\lesssim 
|\xi|^\beta|g(\xi)|\int_0^t 
\frac{(t-s)^{\alpha-1}}{1+(t-s)^{2\alpha}|\xi|^{2\beta}} \, ds,
\end{align*}
which establishes that $|\cdot|^\beta\mathcal{F}(Y\, \hat*\, f)(t,\cdot) \in L^1$. 
This together with~\eqref{Z_L1} gives, again by the Riemann-Lebesgue lemma, 
that
\[
\mathcal{F}^{-1}_{\xi\to x}(|\xi|^\beta\widehat{\Psi}(t, \xi))
\]
is a continuous function, as required.
\\

\noindent {\em Step II:} 
We proceed as in Section 5.3 of~\cite{EideKoch04}. By 
Lemma~\ref{Fox_properties}, (iv) we have
\[
\partial_t^\alpha H_{32}^{12}[t^\alpha |\begin{smallmatrix}
(1-\frac{d}{2},\frac{\beta}{2}), &(0,1), &(0,\frac{\beta}{2})       
\\
(0,1),&(0,\alpha) &
      \end{smallmatrix}]
    =   t^{-\alpha}H_{43}^{13} [t^\alpha |\begin{smallmatrix}
       (0, \alpha), &(1-\frac{d}{2},\frac{\beta}{2}), &(0,1), 
&(0,\frac{\beta}{2}) \\
(0,1),&(0,\alpha), &(\alpha, \alpha) &
      \end{smallmatrix}].
\]
A detailed study of the asymptotics similarly as in  Lemma~\ref{fund_sol_asympt} can be used to show that \(\partial_t^\alpha  Z(t,\cdot)\) is integrable for all \(t>0\). Thus for all \(x\in\Rn\) the function
$J^{1-\alpha}\Psi_1(t,x)$ is continuously differentiable with respect to time for all $t>0$. We now turn to study $\Psi_2$.


Let $v:=J^{1-\alpha}\Psi_2$. Observe that, after changing the order of 
integration and a change of variables, Lemma~\ref{ZY_connection} gives
\begin{align*}
v(t,x)&=\int_0^t \frac{(t-\tau)^{-\alpha}}{\Gamma(1-\alpha)} \int_0^\tau 
\int_{\Rn} Y(\tau-\lambda, x-y)f(\lambda, y) \, dy \, d\lambda \, d\tau \\
&=\int_0^t  \int_\lambda^t \frac{(t-\tau)^{-\alpha}}{\Gamma(1-\alpha)}\int_{\Rn} 
Y(\tau-\lambda, x-y)f(\lambda, y) \, dy \, d\tau \, d\lambda \\
&= \int_0^t \int_{\Rn} Z(t-\tau, x-y)f(\tau, y) \, dy \, d\tau. 
\end{align*}
Using Remark~\ref{nonnegative} to deduce that $Z$ is a probability density gives
\begin{equation}\label{diff_quotient}\notag
\begin{split}
&\frac{1}{h}[v(t+h,x)-v(t,x)] \\
&= \dashint_{t}^{t+h}\int_{\Rn}Z(t+h-s, x-y)f(s,y) \, ds \, dy \\
&\quad+ \int_0^t \int_{\Rn}\frac{1}{h}[Z(t+h-s, x-y)-Z(t-s, x-y)]f(s,y) \, dy \, 
ds \\
&= \dashint_{t}^{t+h}\int_{\Rn}Z(t+h-s, x-y)[f(s,y)-f(s,x)] \, dy \, ds 
+\dashint_{t}^{t+h}f(s,x) \, ds \\
&\quad+ \int_0^t \int_{\Rn}\frac{1}{h}[Z(t+h-s, x-y)-Z(t-s, x-y)]f(s,y) \, dy \, 
ds. \\
\end{split}
\end{equation}
At this point we use the conditions~\eqref{f_hat_assumption} 
and~\eqref{g_assumption} to give some regularity for the right hand side \(f\). 
Our conditions guarantee that \(f(t,\cdot)\) is a H\"older continuous function 
with the H\"older exponent less than \(\min\{1,\beta\}\) uniformly in \(t\), 
i.e. we have an estimate
\[
|f(t,y)-f(t,x)| \le C|x-y|^\gamma,\quad t \ge 0,
\]
for any \(0<\gamma<\min\{1,\beta\}\). Using this we obtain
\begin{align*}
&\dashint_{t}^{t+h}\int_{\Rn}Z(t+h-s, x-y)[f(s,y)-f(s,x)] \, dy \, ds \\
&=\dashint_{0}^{h}\int_{\Rn}Z(s, x-y)|f(t+h-s,y)-f(t+h-s,x)| \, dy \, ds\\
&\lesssim \dashint_{0}^{s}\int_{\{|x-y| \ge s^{\alpha/\beta}\}}Z(s, x-y) \cdot 
|x-y|^\gamma \, dy \, ds\\
&\quad+ \dashint_{0}^{h}\int_{\{|x-y| < s^{\alpha/\beta}\}}Z(s, x-y)\cdot 
|x-y|^\gamma \, dy \, ds. 
\end{align*}
and continue by using Lemma~\ref{fund_sol_asympt} to 
conclude that
\begin{align*}
 \dashint_{0}^{h}\int_{\{|x-y| \ge s^{\alpha/\beta}\}}Z(s,x-y) \cdot |x-y|^\gamma 
\, dy \, ds &\lesssim \dashint_{0}^{h}s^\alpha \int_{h^{\alpha/\beta}}^\infty 
r^{-\beta-1+\gamma} \, dr  \, ds \\
 &\lesssim \dashint_{0}^{h} s^{\gamma\alpha /\beta} \, ds \lesssim 
h^{\gamma\alpha /\beta} \to 0
\end{align*}
as $h \to 0$. Utilizing Lemma~\ref{fund_sol_asympt} similarly in the second 
integral gives
\begin{align*}
 \dashint_{0}^{h} \int_{\{|x-y| < s^{\alpha/\beta}\}}Z(s,x-y) \cdot |x-y|^\gamma 
\, dy \, ds \lesssim h^{\gamma\alpha /\beta} \to 0
\end{align*}
as $h \to 0$. Here one needs to check different cases depending on the values of 
$\alpha$, $\beta$ and $d$. 

Altogether we have that
\[
\lim_{h \to 0}\frac{v(t+h,x)-v(t,x)}{h} = f(t,x) + \int_0^t 
\int_{\Rn}\frac{\partial Z(t-s, x-y)}{\partial t}f(s,y) \, dy \, ds
\]
and, therefore, the function $J^{1-\alpha}\Psi$ is continuously differentiable 
with respect to $t$.
\\

\noindent{\em Step III:}  We need to prove that the function \(\Psi\) 
satisfies the integro-partial differential equation. 
Our assumptions on \(f\) and the asymptotic behavior of \(Y\) 
guarantee that 
\(Y(t-\cdot,x-\cdot)f(\cdot,\cdot)\in L^1((0,t)\times\Rn)\). Therefore
\[
\Psi_2(t,x) \to 0 , \quad \text{as}\quad t 
\to 0,
\]
which means that
\[
\partial_t^\alpha\Psi_2(t,x)=\partial_t^{\alpha}(\Psi_2(t,x)-\Psi_2(0,x)).
\]

Notice that as a by-product of {\em Step II} we obtained 
\begin{equation}\label{sideproduct}
\partial_t^\alpha\Psi_2(t,x)
=f(t,x) + \int_0^t \int_{\Rn}\frac{\partial Z(t-s, x-y)}{\partial t}f(s,y) \, 
dy \, ds.
\end{equation}

We will show that \( 
(\partial_t^\alpha+(-\Delta)^{\beta/2})\Psi_2(t,x)=f(t,x)\). 
By~\eqref{sideproduct} it is enough to show that 
\[
(-\Delta)^{\beta/2}\Psi_2(t,x)=-\int_0^t \int_{\Rn}\frac{\partial Z(t-s, 
x-y)}{\partial t}f(s,y) \, 
dy \, ds.
\]

We start by 
calculating 
\[
(-\Delta)^{\beta/2}Y(t,x)=\mathcal{F}_{\xi\to 
x}^{-1}(|\xi|^\beta\widehat{Y}(t,\xi))(t,x).
\]
Recall from~\eqref{Y_fourier_trans} that
\[
\widehat{Y}(t, \xi)=(2\pi)^{-d/2} t^{\alpha-1}E_{\alpha, \alpha}(-|\xi|^\beta 
t^\alpha)=(2\pi)^{-d/2} 
t^{\alpha-1}H_{12}^{11}\bigl[ \abs{\xi}^\beta t^{\alpha}
\big| \begin{smallmatrix} (0,1) & \\ (0,1),& (1-\alpha,\alpha ) 
\end{smallmatrix} 
\big].
\]
Notice that in the following calculations we have to interpret the integral 
properly due to poor decay of 
\(\widehat{Y}(t,\cdot)\) at infinity, see Remark~\ref{rem_Htrans_formula}. 

Notice that $\widehat Y$ is a radial function of $\xi$ and
for radial functions we have in general that~\cite[Appendix B.5]{Graf04}
\[
\mathcal{F}(f)(\abs{\xi})=\abs{\xi}^{\frac{2-d}{2}}\int_{0}^{\infty
} f(r)r^ { d/2}J_{\frac{d-2}{2}}(r\abs{\xi}) \, dr,
\]
where $J_{(d-2)/2}$ is the modified Bessel function. For the 
definition, see~\cite{Wats95}. We use this formula together with 
Lemma~\ref{Fox_properties} (v) and (vi) to calculate
\begin{align*}
&\mathcal{F}^{-1}(|\xi|^\beta\widehat{Y}(t, \xi))(x)\\
&=(2\pi)^{-d/2}\abs{x}^{\frac{2-d}{2}}t^{\alpha-1}\int_{0}^{\infty}r^{d/2+\beta}
J_ { \frac { d-2 }{2}}(r\abs{x})H_{12}^{11}\bigl[ r^\beta t^{\alpha}
\big| \begin{smallmatrix} (0,1) & \\ (0,1),& (1-\alpha,\alpha ) 
\end{smallmatrix} 
\big] \, dr \\
&=\frac{2^\beta}{\pi^{n/2}}|x|^{-d-\beta}t^{\alpha-1}H_{32}^{12}\bigl[ 2^\beta 
|x|^{-\beta}t^{\alpha}
\big| \begin{smallmatrix} \l(1-\frac{d}{2}-\frac{\beta}{2}, \frac{\beta}{2}\r), 
&(0,1), &\l(-\frac{\beta}{2}, \frac{\beta}{2}\r) \\ (0,1),& (1-\alpha,\alpha ) 
\end{smallmatrix} 
\big] \\
&=\pi^{-d/2}|x|^{-d}t^{-1}H_{32}^{12}\bigl[ 2^\beta |x|^{-\beta}t^{\alpha}
\big| \begin{smallmatrix} \l(1-\frac{d}{2}, \frac{\beta}{2}\r), &(1,1), &\l(0, 
\frac{\beta}{2}\r) \\ (1,1),& (1,\alpha ) \end{smallmatrix}\big].
\end{align*}

On the other hand, combining the chain rule with Lemma~\ref{Fox_properties} (i), 
gives
\begin{equation}\label{Z_derivative}
\partial_t Z(t,x)=\alpha \pi^{-d/2}|x|^{-d}t^{-1}H_{43}^{13} \big[ 2^\beta 
t^\alpha |x|^{-\beta}\big| \begin{smallmatrix} (0,1), &\l(1-\frac{d}{2}, 
\frac{\beta}{2}\r), &(0,1), &\l(0, \frac{\beta}{2}\r) \\ (0,1),& (0,\alpha ), 
&(1,1) \end{smallmatrix} 
\big].
\end{equation}
Now by studying the Mellin transform $\mathcal H_{43}^{13}$ and using the 
properties of the Gamma function gives
\begin{align*}
&H_{43}^{13} \big[ 2^\beta t^\alpha |x|^{-\beta}\big| \begin{smallmatrix} (0,1), 
&\l(1-\frac{d}{2}, \frac{\beta}{2}\r), &(0,1), &\l(0, \frac{\beta}{2}\r) \\ 
(0,1),& (0,\alpha ), &(1,1) \end{smallmatrix} 
\big] \\
&= -\alpha^{-1}H_{32}^{12} \big[ 2^\beta t^\alpha |x|^{-\beta}\big| 
\begin{smallmatrix} \l(1-\frac{d}{2}, \frac{\beta}{2}\r), &(1,1), &\l(0, 
\frac{\beta}{2}\r) \\  (1,1 ), &(1,\alpha) \end{smallmatrix} 
\big].
\end{align*}
Inserting this into~\eqref{Z_derivative} yields
\[
\partial_t Z(t,x)=-\mathcal{F}^{-1}(|\xi|^\beta\widehat{Y}(t, \xi))(x).
\]
By the above calculation 
\begin{align*}
 &\mathcal F\l(\int_0^t \int_{\Rn}\frac{\partial Z(t-s, x-y)}{\partial t}f(s,y) 
\, dy \, ds\r)\\
 &=\int_0^t \mathcal{F}(\partial_t Z(t-s, \xi))\widehat{f}(s, \xi) \, ds 
=-|\xi|^\beta\int_0^t \widehat{Y}(t-s, \xi)\widehat{f}(s, \xi) \, ds. 
\end{align*}
Now using the growth condition of function $f$ (cf.~\eqref{f_hat_assumption}), 
we have that $|\cdot|^\beta 
\int_0^t\widehat{Y}(t-s,\cdot)\widehat{f}(s,\cdot)\,ds \in L^1(\Rn)$ and 
therefore it has a unique inverse Fourier transform. We obtain
\begin{align*}
&(-\Delta)^{\beta/2}\l(\int_0^t\int_{\Rn} Y(t-s,x-y)f(s,y)\, dy \, ds\r) \\
&= \mathcal F^{-1}\l(|\xi|^\beta\int_0^t \widehat{Y}(t-s, \xi)\widehat{f}(s, 
\xi) \, ds\r) \\
&=-\int_0^t \int_{\Rn}\frac{\partial Z(t-s, x-y)}{\partial t}f(s,y) \, dy \, ds.
\end{align*}
Therefore
\[
\big(\partial_t^\alpha+(-\Delta)^{\beta/2}\big)\Psi_2(t,x) = f(t,x),
\]
as claimed. 

Let us now study the first integral. By using the asymptotics of $Z$ as in {\em 
Step II}, it is straightforward to 
show that
\[
\int_{\Rn} Z(t, x-y) u_0(y) \, dy \to u_0 (x), \quad \text{as}\quad t \to 0.
\]
A similar argument as for $\Psi_2$ produces 
\[
\partial_t^\alpha\big[\int_{\Rn} Z(t,x-y)u_0(y)\, 
dy-u_0(x)\big]+(-\Delta)^{\beta/2}\int_{\Rn} Z(t,x-y)u_0(y)\, dy = 0.
\]
We omit the details.


Now \(\Psi\) 
satisfies the initial condition by the superposition principle.

\noindent{\em Step IV:} Finally we have to prove that \(\Psi\) is a jointly 
continuous function in \([0,\infty)\times\Rn\). The continuity at \(t=0\) 
is established in {\em Step III}. If \(t>0\), the continuity in both variables 
follows from our conditions given for \(u_0\) and \(f\), which guarantee that 
\(u_0\) and \(f\) are continuous and uniformly bounded. Then the asymptotics of 
\(Z\) and \(Y\) given in Lemmas~\ref{fund_sol_asympt} 
and~\ref{fund_sol_asympt_Y} together with the Lebesgue dominated convergence 
theorem imply the continuity. This finishes the 
proof.
\end{proof}

\section{Large-time behavior of mild solutions}

We begin by calculating an \(L^p\)-decay estimate for the fundamental solution 
\(Z\), which is given in the following lemma.

\begin{lemma}\label{Z_Lp_decay}
Let \(d\ge 1\), $0<\alpha \le 1$ and $0 < \beta \le2$. 
Then \(Z(t,\cdot)\in L^p(\Rn)\) for any $t>0$ and
\begin{equation}\label{fund_sol_decay1}
\| Z(t,\cdot)\|_{L^p(\Rn)}\lesssim t^{-\frac{\alpha 
d}{\beta}(1-\frac{1}{p})},\quad t>0,
\end{equation}
for every \(1\le p<\kappa_3(\beta, d)\), where
\begin{equation}\label{kappa_beta}
\kappa_3=\kappa_3(\beta, d):=\begin{cases} 
\frac{d}{d-\beta}, \quad \text{if} \ \ d > \beta,\\
\infty, \quad \text{otherwise.}\end{cases}
\end{equation}
Moreover, if $\alpha=1$ or $1=d \le \beta$, then~\eqref{fund_sol_decay1} holds for all $p \in [1, \infty]$. Finally, for 
\(d>\beta\) and \(0<\alpha<1\), we obtain
\[
\|Z(t,\cdot)\|_{L^{\frac d{d-\beta}, \infty}} \lesssim t^{-\alpha}, \quad t>0. 
\]

\begin{proof}
We begin by decomposing the $L^p$-integral of $Z$ as
\[
\|Z(t,\cdot)\|_{L^p}^p \le \int_{\{R\ge 1\}}Z(t,x)^p\,dx+\int_{\{R\le 1\}}Z(t,x)^p\,dx.
\]
In view of Lemma~\ref{fund_sol_asympt}, we have for all dimensions $d$ and for 
all values $1\le p<\infty$ that
\begin{align*}
\int_{\{R\ge 1\}}Z(t,x)^p\,dx  & \lesssim \int_{\{R\ge 1\}} t^{\alpha p}|x|^{-dp-\beta p}\,dx \\
& \lesssim \int_{t^\frac{\alpha}{\beta}}^\infty  t^{\alpha p}r^{-dp-\beta p}r^{d-1}\,dr \lesssim t^{ -\frac{\alpha d }{\beta}(p-1)},
\end{align*}
and thus
\begin{equation} \label{goodpart_Z}
\left(\int_{\{R\ge 1\}}Z(t,x)^p\,dx\right)^\frac{1}{p} \lesssim t^{ 
-\frac{\alpha d }{\beta}\,\left(1-\frac{1}{p}\right)}\quad \textrm{for 
all $1<p<\infty$ and $t>0$}.
\end{equation}

We come now to the estimate for the integral where $R\le 1$. In the case 
\(\alpha=1\) or $\beta > d$ and $ 0<\alpha<1$, we have for all $1\le p<\infty$ 
that
\begin{align*}
\int_{\{R\le 1\}}Z(t,x)^p\,dx & \lesssim \int_{\{R\le 1\}} t^{-\frac{\alpha 
dp}{\beta}}\,dx  \lesssim
\int_0^{t^\frac{\alpha }{\beta}} t^{-\frac{\alpha d p}{\beta}}r^{d-1}\,dr 
\lesssim t^{-\frac{\alpha dp}{\beta}+\frac{\alpha d}{\beta}}.
\end{align*}
If $\beta =d$ and $0<\alpha <1$ we estimate
\begin{align*}
\int_{\{R\le 1\}}Z(t,x)^p\,dx & \lesssim \int_{\{R\le 1\}} t^{-\alpha p}(|\log(|x|^\beta t^{-\alpha})|+1)^p\,dx\\
& \lesssim \int_0^{t^\frac{\alpha}{\beta}} t^{-\alpha p} \left(|\log (r^\beta t^{-\alpha})|+1\right)^p\,r^{d-1}\,dr\\
& \lesssim \int_0^1 t^{-\alpha p+\alpha d/\beta} \left(|\log (s^\beta)|+1\right)^ps^{d-1}\,ds \\
&\lesssim   t^{-\alpha p+\alpha}=t^{-\frac{\alpha d}{\beta}(p-1)},
\end{align*}
for all $1\le p<\infty$. Note that the condition \(\beta>d\) can only happen if \(d=1\).

Finally, if $0<\beta <d$ and $0<\alpha <1$ we have
\begin{align*}
\int_{\{R\le 1\}}Z(t,x)^p\,dx & \lesssim \int_{\{R\le 1\}} t^{-\alpha p}|x|^{-dp+\beta p}\,dx
\lesssim \int_0^{t^\frac{\alpha}{\beta}} t^{-\alpha p} r^{(-d+\beta)p} r^{d-1}\,dr \\
& \lesssim   t^{-\alpha p+\frac{\alpha}{\beta}[d-(d-\beta)p]} 
\lesssim t^{-\frac{\alpha d}{2}\, (p-1)},
\end{align*}
whenever the last integral is finite, that is, whenever
\[
p<\,\frac{d}{d-\beta}=\kappa_3(\beta, d).
\]

Combining the previous estimates we see that
\begin{equation} \label{badpart}
\left(\int_{\{R\le 1\}}Z(t,x)^p\,dx\right)^\frac{1}{p} \lesssim t^{ 
-\frac{\alpha d }{\beta}\,\left(1-\frac{1}{p}\right)}\quad \textrm{for 
all $1\le p<\kappa_3(\beta, d)$ and $t>0$}.
\end{equation}

Observe that by Lemma~\ref{fund_sol_asympt} we have $Z(t,\cdot)\in L^\infty(\R)$ for all $t>0$, provided $\alpha=1$ or 
\(\beta<d\), and moreover, we have the estimate
\[
\|Z(t,x)\|_{L^\infty} \lesssim t^{-\frac{\alpha d}{\beta}},
\]
which proves the second statement.

For the weak-$L^p$-estimate we set $p=\frac{d}{d-\beta}$. We need to estimate
\[
\|Z(t, \cdot)\|_{L^p,\,\infty}=\sup \left\{\lambda\, d_{Z(t,x)}(\lambda)^{\frac{1}{p}}:\,\lambda>0\right\},
\]
where
\[
d_{Z(t,x)}(\lambda)=|\{x\in \R^d:\,Z(t,x)>\lambda\}|
\]
denotes the distribution function of $Z(t,x)$. Using again the similarity variable $R=t^{-\alpha}|x|^\beta$ we have
\begin{equation}\label{weak_triangle}
\begin{split}
&\|Z(t, \cdot)\|_{L^{p, \infty}}  \\
&\le 2\left(\|Z(t,x)\chi_{\{R\le 1\}}(t)\|_{L^{p, \infty}}+
\|Z(t,x)\chi_{\{R\ge 1\}}(t)\|_{L^{p, \infty}}\right).
\end{split}
\end{equation}
Employing (\ref{goodpart_Z}), we find that
\[
\|Z(t,x)\chi_{\{R\ge 1\}}(t)\|_{L^{p, \infty}}
\le \|Z(t,x)\chi_{\{R\ge 1\}}(t)\|_{L^p}
\le C t^{ -\frac{\alpha d }{\beta}\,\left(1-\frac{1}{p}\right)}
= C t^{-\alpha}.
\]
For the term with $R\le 1$ we use the case $0<\beta <d$ of Lemma~\ref{fund_sol_asympt} to estimate
\begin{align*}
d_{Z(t,x)\chi_{\{R\le 1\}}(t)}(\lambda) & =|\{x\in \R^d:\,Z(t,x)>\lambda\;\mbox{and}\;R\le 1\}|\\
& \le |\{x\in \R^d:\,\lambda< C t^{-\alpha} |x|^{-d+\beta}\}|\\
& = |\{x\in \R^d:\,|x|< \left(C t^{-\alpha}\lambda^{-1}\right)^{\frac{1}{d-\beta}}\}|\\
& \le C_1 \left( t^{-\alpha}\lambda^{-1}\right)^{\frac{d}{d-\beta}}.
\end{align*}
This shows that
\[ d_{Z(t,x)\chi_{\{R\le 1\}}(t)}(\lambda)^{1/p}\le C_1^{1/p} t^{-\alpha}\lambda^{-1},
\]
and thus
\[
\|Z(t,x)\chi_{\{R\le 1\}}(t)\|_{L^{p,\infty}}\lesssim\,t^{-\alpha}.
\]
This finishes the proof.
\end{proof}
\end{lemma}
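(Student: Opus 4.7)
The plan is to exploit directly the pointwise asymptotics for $Z$ proved in Lemma~\ref{fund_sol_asympt}, splitting the integration domain into the self-similar regions $\{R \le 1\}$ and $\{R \ge 1\}$, where $R = |x|^\beta t^{-\alpha}$. On each region $Z$ has a clean power-type (or logarithmic) bound, so the $L^p$-norm is reduced to an elementary radial integral. The $R = 1$ surface corresponds to $|x| = t^{\alpha/\beta}$, which is precisely the natural scale of the problem, and all decay rates will come out as $t^{-\frac{\alpha d}{\beta}(1-1/p)}$ by homogeneity.

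For the \emph{far region} $\{R \ge 1\}$, Lemma~\ref{fund_sol_asympt}(ii) gives $Z(t,x) \lesssim t^\alpha |x|^{-d-\beta}$ (including the case $\beta = 2$), so a radial integration from $r = t^{\alpha/\beta}$ to $\infty$ converges for every $1 \le p < \infty$ and yields the expected bound $t^{-\frac{\alpha d}{\beta}(1-1/p)}$. For the \emph{near region} $\{R \le 1\}$, I would split into the three subcases of Lemma~\ref{fund_sol_asympt}(i). In the subcase $\alpha = 1$ or $\beta > d$ (which forces $d = 1$), the function $Z$ is uniformly bounded by $t^{-\alpha d/\beta}$ and the integral over a ball of radius $t^{\alpha/\beta}$ is direct; this also produces the $L^\infty$-bound stated in the ``moreover'' part. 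In the critical subcase $\beta = d$, $0 < \alpha < 1$, a substitution $r = t^{\alpha/\beta} s$ absorbs the logarithmic factor since $\int_0^1 (|\log s^\beta| + 1)^p s^{d-1}\,ds$ is finite for every $p < \infty$. In the remaining subcase $0 < \beta < d$, $0 < \alpha < 1$, the bound $Z \lesssim t^{-\alpha}|x|^{-d+\beta}$ leads to the integral $\int_0^{t^{\alpha/\beta}} r^{(\beta - d)p + d - 1}\,dr$, which is finite precisely when $(\beta - d)p + d > 0$, i.e.\ $p < d/(d-\beta) = \kappa_3$; after the substitution this again collapses to the correct power of $t$.

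The only genuinely new ingredient is the endpoint statement for $d > \beta$, $0 < \alpha < 1$, $p = \kappa_3$. Here the integral of $Z^p$ over $\{R \le 1\}$ just fails to converge, so I would estimate the weak-$L^p$ norm via the distribution function. Using~\eqref{weak_triangle}-type subadditivity, the $\{R \ge 1\}$ piece is controlled by its $L^{\kappa_3}$-norm (which we already have and equals $C t^{-\alpha}$). For the $\{R \le 1\}$ piece, the pointwise bound $Z \lesssim t^{-\alpha}|x|^{-d+\beta}$ gives the set inclusion
\[
\{x : Z(t,x)\chi_{\{R\le 1\}} > \lambda\} \subset \{x : |x| < (C t^{-\alpha}\lambda^{-1})^{1/(d-\beta)}\},
\]
and hence $d_{Z\chi_{\{R\le 1\}}}(\lambda) \lesssim (t^{-\alpha}\lambda^{-1})^{d/(d-\beta)}$, so that $\lambda\, d(\lambda)^{1/\kappa_3} \lesssim t^{-\alpha}$ uniformly in $\lambda$. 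The main delicate point I anticipate is tracking the precise exponents in the near-region integrals so that the powers of $t$ coming from the $t^{-\alpha}$ prefactor and from the radial substitution combine exactly to $-\frac{\alpha d}{\beta}(1 - 1/p)$; everything else is bookkeeping based on Lemma~\ref{fund_sol_asympt}.
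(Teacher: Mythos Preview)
Your proposal is correct and follows essentially the same approach as the paper: the same splitting into $\{R\le 1\}$ and $\{R\ge 1\}$ using Lemma~\ref{fund_sol_asympt}, the same three subcases for the near region (including the substitution $r=t^{\alpha/\beta}s$ in the logarithmic case), and the same treatment of the endpoint $p=\kappa_3$ via the distribution function with the strong $L^{\kappa_3}$-bound on the far piece. There is nothing substantively different to report.
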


As a simple consequence of the above lemma we obtain the following decay result. 

\begin{proposition}\label{prop_homog}
Let \(d\ge 1\), \(0<\alpha\le 1\) and \(0<\beta\le 2\). Assume that \(u\) 
is the mild solution of equation~\eqref{classical_equation} with \(f\equiv0\) and \(u_0\in 
L^q(\Rn)\), where $1 \le q \le \infty$. 
Then the following hold:
\begin{itemize}
\item[(i)] if $q=\infty$ we have
\[
\|u(t,\cdot)\|_{L^\infty(\Rn)}\lesssim \|u_0\|_{L^\infty(\Rn)},\quad t>0;
\]
\item[(ii)] if $1 \le q < \infty$ and $d > q\beta$, we have for every $r \in [q, \frac{qd}{d-q\beta})$ that
\begin{equation}\label{homog_sol_decay_est}
\|u(t,\cdot)\|_{L^r(\Rn)}\lesssim t^{-\frac{\alpha d}{\beta}(\frac{1}{q}-\frac{1}{r})},\quad t>0,
\end{equation}
and if, in addition, $0<\alpha <1 < d$ we obtain
\[
\|u(t,\cdot)\|_{L^{\frac{qd}{d-q\beta}, \infty}(\Rn)}\lesssim t^{-\alpha},\quad t>0;
\]
\item[(iii)] if $1 \le q < \infty$ and $d = q\beta$, the estimate~\eqref{homog_sol_decay_est} holds for every $r \in [q, \infty)$;

\item[(iv)] if $d < q\beta$ or $\alpha=1$, the estimate~\eqref{homog_sol_decay_est} holds for every $r \in [q, \infty]$.
\end{itemize}
\end{proposition}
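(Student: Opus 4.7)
The mild solution with $f\equiv 0$ is by definition the spatial convolution $u(t,\cdot)=Z(t,\cdot)\star u_0$. The entire proposition is therefore a bookkeeping exercise combining Young-type convolution inequalities with the $L^p$-decay bounds for $Z$ proved in Lemma~\ref{Z_Lp_decay}. The plan is to dispatch part (i) separately using the probabilistic mass of $Z$, handle (ii)--(iv) uniformly with Young's inequality~\eqref{Young1}, and treat the endpoint weak-type estimate in (ii) with~\eqref{Young2} and~\eqref{Youngw}.

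For (i), Remark~\ref{nonnegative} gives $\|Z(t,\cdot)\|_{L^1}=1$ for every $t>0$, so Young's inequality immediately yields
\[
\|u(t,\cdot)\|_{L^\infty}\le \|Z(t,\cdot)\|_{L^1}\|u_0\|_{L^\infty}=\|u_0\|_{L^\infty}.
\]

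For the strong estimates in (ii)--(iv), given $q$ and $r\ge q$ I would apply~\eqref{Young1} with the Young exponent $p$ determined by $1+\tfrac{1}{r}=\tfrac{1}{p}+\tfrac{1}{q}$, so that $1-\tfrac{1}{p}=\tfrac{1}{q}-\tfrac{1}{r}$. Lemma~\ref{Z_Lp_decay} then gives
\[
\|u(t,\cdot)\|_{L^r}\le \|Z(t,\cdot)\|_{L^p}\|u_0\|_{L^q}\lesssim t^{-\frac{\alpha d}{\beta}(1-\frac{1}{p})}\|u_0\|_{L^q}=t^{-\frac{\alpha d}{\beta}(\frac{1}{q}-\frac{1}{r})}\|u_0\|_{L^q},
\]
which is exactly~\eqref{homog_sol_decay_est}. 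The only point requiring verification is that the admissibility condition $p<\kappa_3(\beta,d)$ from the lemma translates into the asserted range of $r$ in each case. A direct calculation with $\kappa_3=d/(d-\beta)$ shows $p<\kappa_3$ is equivalent to $\tfrac{1}{q}-\tfrac{1}{r}<\tfrac{\beta}{d}$, i.e.\ $r<qd/(d-q\beta)$ when $d>q\beta$ (case (ii)), $r<\infty$ when $d=q\beta$ (case (iii)), and unconditionally when $d<q\beta$ (the first alternative of case (iv)). For the endpoint $r=\infty$ (with $p=q/(q-1)$, or $p=\infty$ if $q=1$) and for the case $\alpha=1$ one invokes the second statement of Lemma~\ref{Z_Lp_decay}, which extends~\eqref{fund_sol_decay1} to all $p\in[1,\infty]$ whenever $\alpha=1$ or $d=1\le\beta$, precisely covering all configurations in (iv).

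Finally, for the weak-type endpoint $r=qd/(d-q\beta)$ in (ii), I would apply~\eqref{Young2} when $q>1$ (and~\eqref{Youngw} when $q=1$) with $f=Z(t,\cdot)$, $g=u_0$, and $p=d/(d-\beta)$; the same algebraic identity $1+\tfrac{1}{r}=\tfrac{1}{p}+\tfrac{1}{q}$ holds, and the last line of Lemma~\ref{Z_Lp_decay} furnishes $\|Z(t,\cdot)\|_{L^{d/(d-\beta),\infty}}\lesssim t^{-\alpha}$ under the assumption $0<\alpha<1<d$. Combining gives $\|u(t,\cdot)\|_{L^{qd/(d-q\beta),\infty}}\lesssim t^{-\alpha}\|u_0\|_{L^q}$, as claimed. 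There is no substantial obstacle; the only care needed is the routine matching of exponent thresholds in Young's inequality with the case distinctions of Lemma~\ref{Z_Lp_decay}.
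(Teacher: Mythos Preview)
Your proposal is correct and follows essentially the same approach as the paper: Young's inequality \eqref{Young1} plus Lemma~\ref{Z_Lp_decay}, with the weak-type Young inequalities \eqref{Young2}, \eqref{Youngw} for the endpoint in (ii). Your treatment is in fact slightly more careful than the paper's in distinguishing the cases $q>1$ and $q=1$ for the weak endpoint.

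One small imprecision: in your handling of (iv) you assert that the second statement of Lemma~\ref{Z_Lp_decay} (valid when $\alpha=1$ or $d=1\le\beta$) ``precisely cover[s] all configurations in (iv)''. That is not quite right: the alternative $d<q\beta$ in (iv) permits, for instance, $\alpha<1$, $d=2$, $\beta=1$, $q=3$, which falls under neither $\alpha=1$ nor $d=1\le\beta$. The fix is immediate, though: whenever $d<q\beta$ with $q>1$, the Young exponent at $r=\infty$ is $p=q/(q-1)$, and the strict inequality $d<q\beta$ is equivalent to $q/(q-1)<d/(d-\beta)=\kappa_3$ (when $d>\beta$; if $d\le\beta$ then $\kappa_3=\infty$), so the \emph{first} statement of Lemma~\ref{Z_Lp_decay} already applies. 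With this correction your argument is complete and matches the paper's proof.
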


\begin{proof}
Let $p$ be defined via
\begin{equation}\label{pqr_cond}
1+\frac{1}{r}=\frac{1}{p}+\frac{1}{q}.
\end{equation}
For such $p,q$ and $r$ we may use Young's inequality for convolutions to obtain
\begin{equation}\label{Z_Young}
\|\Psi(t,\cdot)\|_{L^r}=\|Z(t, \cdot) \star u_0(\cdot)\|_{L^r} \le \|Z(t, \cdot)\|_{L^p}\|u_0\|_{L^q}.
\end{equation}
The idea is now to use Lemma~\ref{Z_Lp_decay} to estimate the $L^p$-decay of $Z$ on the right hand side of the above estimate. We only need to consider different cases corresponding to the different choices of the parameters.

Recall that by Lemma~~\ref{Z_Lp_decay} we obtain 
\begin{equation}\label{Z_decay}
\|Z(t, \cdot)\|_{L^p(\Rn)} \lesssim t^{-\frac{\alpha d}{\beta}(1-\frac 1p)},
\end{equation}
for $1 \le p < \kappa_3(\beta, d)$, where $\kappa_3$ is as in~\eqref{kappa_beta}. Now, claim (i) follows directly from choosing $p=1$, $r=\infty$ and $q=\infty$ in~\eqref{Z_Young}. 

On the other hand, a straightforward calculation shows that for $r \in [q, \frac{qd}{d-q\beta})$, we have $1 \le p < \frac{d}{d-\beta}$. We again use the above $L^p$-estimate for $Z$ together with~\eqref{Z_Young} to obtain the claim. 

For $d>q\beta$ and $r=\frac{qd}{d-q\beta}$, we have from~\eqref{pqr_cond} that $p=\frac{d}{d-\beta}$. Now we may use the second part of Lemma~\ref{Z_Lp_decay} to obtain that if $d \ge 2$ and $0 < \alpha < 1$, then
\[
\|Z(t, \cdot) \|_{L^{\frac{d}{d-\beta}, \infty}(\Rn)} \lesssim t^{-\alpha}, \quad t>0,
\]
which together with Young's inequality for weak $L^p$-spaces gives
\begin{align*}
\|u(t,\cdot)\|_{L^{\frac{qd}{d-q\beta},\infty}(\Rn)}&\lesssim\|Z(t,\cdot)\|_{L^{\frac d{d-\beta},\infty}(\Rn)}
\|u_0\|_{L^q(\Rn)} \\
& \lesssim t^{-\alpha},
\end{align*}
as required.

For (iii), observe that inserting $q=d/\beta$ and $p \in [1, \frac d{d-\beta})$ in~\eqref{pqr_cond} gives $r \in [q, \infty)$. Similarly, inserting $q  \in (d/\beta, \infty]$ and $p \in [1, \frac d{d-\beta})$ in~\eqref{pqr_cond} gives $r \in [q, \infty]$. This yields the first claim of (iv). If $\alpha=1$ we, in turn, by Lemma~\ref{Z_Lp_decay} obtain the $L^p$-decay~\eqref{Z_decay} for any $1 \le p \le \infty$ and we may again use Young's inequality, as in~\eqref{Z_Young}, to obtain the claim for any $r \in [q, \infty]$.
\end{proof}



We continue by studying the above type of results for the inhomogeneous equation. First we need the \(L^p\)-decay estimates for the fundamental solution $Y$. 

\begin{lemma}\label{Y_decay_estimate}
Let \(d\ge 1\), $0 <\alpha \le1$ and $0<\beta \le 2$. Then \(Y(t,\cdot)\in L^p(\Rn)\) and 
\begin{equation}\label{fund_sol_decay2}
\| Y(t,\cdot)\|_{L^p(\Rn)}\lesssim t^{\alpha-1-\frac{\alpha 
d}{\beta}(1-\frac{1}{p})},\quad t>0,
\end{equation}
 for every \(1\le p<\kappa_2\), where
\[
\kappa_2=\kappa_2(\beta, d)=\begin{cases} \frac{d}{d-2\beta}, \quad\text{if} \ \ d > 2\beta, \\
\infty, \quad\text{otherwise}. \end{cases}
\] 
At the borderline $p=\kappa_2$, we also have  for $d > 2\beta$ that $Y(t,\cdot)$ belongs to $L^{\frac{d}{d-2\beta}, \infty}(\Rn)$ and
\[
\|Y(t,\cdot)\|_{L^{\frac{d}{d-2\beta}, \infty}} \lesssim t^{-1-\alpha}, \quad t>0. 
\]
Finally, if $\alpha=1$ or $d < 2\beta$, estimate~\eqref{fund_sol_decay2} holds for all $p \in [1,\infty]$.

\begin{proof}
The proof is similar to that of the function $Z$. We give the proof in the case 
$d < 2 \beta$ and \(0<\alpha<1\) as an example. We begin by decomposing the 
$L^p$-integral of $Y$ as
\[
\|Y(t,\cdot)\|_{L^p}^p =\int_{\{R\ge 1\}}Y(t,x)^p\,dx+\int_{\{R\le 
1\}}Y(t,x)^p\,dx.
\]
By Lemma~\ref{fund_sol_asympt_Y}, we have for all dimensions $d$ and for all 
values $1\le p<\infty$ that
\begin{align*}
\int_{\{R\ge 1\}}Y(t,x)^p\,dx  & \lesssim \int_{\{R\ge 1\}} t^{2\alpha 
p-p}|x|^{-dp-\beta p}\,dx \\
& \lesssim t^{2\alpha p -p} \int_{t^\frac{\alpha}{\beta}}^\infty  r^{-dp-\beta 
p}r^{d-1}\,dr \lesssim t^{ (\alpha-1)p -\frac{\alpha d}{\beta}(p-1)},
\end{align*}
and thus
\begin{equation} \label{goodpart}
\left(\int_{\{R\ge 1\}}Y(t,x)^p\,dx\right)^\frac{1}{p} \lesssim t^{\alpha-1 
-\frac{\alpha d }{\beta}\,\left(1-\frac{1}{p}\right)}\quad\textrm{for 
all $1\le p<\infty$ and $t>0$}.
\end{equation}

We come now to the estimate for the integral where $R\le 1$. Again, by 
Lemma~\ref{fund_sol_asympt_Y}, we have 
\begin{align*}
\int_{\{R\le 1\}}Y(t,x)^p\,dx & \lesssim \int_{\{R\le 1\}} 
t^{(\alpha-1)p-\frac{\alpha d}{\beta}p}\,dx  \lesssim
t^{(\alpha-1)p-\frac{\alpha d}{\beta}p}\int_0^{t^\frac{\alpha}{\beta}} 
r^{d-1}\,dr \\
&\lesssim t^{ (\alpha-1)p -\frac{\alpha d}{\beta}(p-1)}
\end{align*}
for all $1\le p<\infty$, which finishes the proof of the first statement in 
this case. Since in this case even \(Y(t,\cdot)\in L^\infty(\Rn)\), we see that 
the second statement holds as well. 

The weak-$L^p$ estimate is done similarly to Lemma~\ref{Z_Lp_decay}. We omit the 
details.
\end{proof}

\end{lemma}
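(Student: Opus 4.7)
My plan follows the template established for $Z$ in Lemma~\ref{Z_Lp_decay}: split $\R^d$ into the far-field region $\{R\ge 1\}$ and the near-field region $\{R\le 1\}$, where $R=|x|^\beta t^{-\alpha}$, and apply the pointwise asymptotics from Lemma~\ref{fund_sol_asympt_Y} in each region. The exponents on the right-hand side are dictated entirely by scaling in $R$, so after one case has been carried out the others amount to bookkeeping.

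For the far-field contribution, Lemma~\ref{fund_sol_asympt_Y}(ii) gives $Y(t,x)\lesssim t^{2\alpha-1}|x|^{-d-\beta}$ for all $\beta\in(0,2]$. Passing to polar coordinates,
\[
\int_{\{R\ge 1\}} Y(t,x)^p\,dx\lesssim t^{(2\alpha-1)p}\int_{t^{\alpha/\beta}}^{\infty} r^{-(d+\beta)p+d-1}\,dr \lesssim t^{(\alpha-1)p - \frac{\alpha d}{\beta}(p-1)},
\]
which is finite for every $p\ge 1$ and produces exactly the claimed decay exponent. For the near-field contribution I would handle the three cases from Lemma~\ref{fund_sol_asympt_Y}(i) separately. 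When $d<2\beta$ or $\alpha=1$, the bound $Y\lesssim t^{\alpha-1-\alpha d/\beta}$ is uniform in $x$, so integrating over a ball of radius $t^{\alpha/\beta}$ yields the desired rate for every $p\in[1,\infty]$ (this also establishes the $L^\infty$ bound in the statement). When $d>2\beta$ and $\alpha<1$, the bound is $t^{-\alpha-1}|x|^{-d+2\beta}$; the radial integral $\int_0^{t^{\alpha/\beta}} r^{(-d+2\beta)p+d-1}\,dr$ converges iff $p<\kappa_2=\frac{d}{d-2\beta}$ and evaluates to the correct power of $t$. The borderline case $d=2\beta$ is handled as in Lemma~\ref{Z_Lp_decay}: the logarithmic factor is absorbed after the change of variables $s=r^\beta t^{-\alpha}$, since $\int_0^1 |\log s|^p\,s^{d/\beta-1}\,ds<\infty$ for every $p<\infty$.

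For the weak-$L^p$ endpoint when $d>2\beta$, I would split $Y=Y\chi_{\{R\le 1\}}+Y\chi_{\{R\ge 1\}}$ and apply the quasi-triangle inequality for $L^{p,\infty}$ as in~\eqref{weak_triangle}. The far-field piece is controlled by $\|Y\chi_{\{R\ge 1\}}\|_{L^p}\lesssim t^{-1-\alpha}$ from the calculation above with $p=\kappa_2$. For the near-field piece, using $Y\lesssim t^{-\alpha-1}|x|^{-d+2\beta}$ the distribution function of $Y\chi_{\{R\le 1\}}$ is estimated by
\[
d_{Y\chi_{\{R\le 1\}}}(\lambda)\le \bigl|\{|x|< (Ct^{-\alpha-1}\lambda^{-1})^{1/(d-2\beta)}\}\bigr|\lesssim (t^{-\alpha-1}\lambda^{-1})^{d/(d-2\beta)},
\]
so $\lambda\, d^{1/\kappa_2}(\lambda)\lesssim t^{-\alpha-1}$, yielding the $L^{\kappa_2,\infty}$ bound.

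The only genuine obstacle is correctly tracking exponents across the three near-field regimes; the algebra can easily mislabel the $d=2\beta$ or $d<2\beta$ powers of $t$. I would verify consistency by checking that, in every case, the two regions produce the same exponent $\alpha-1-\frac{\alpha d}{\beta}(1-\frac{1}{p})$, as the scaling $|x|\sim t^{\alpha/\beta}$ dictates. No further tool beyond Lemma~\ref{fund_sol_asympt_Y} and elementary polar integration is required.
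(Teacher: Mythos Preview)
Your proposal is correct and follows essentially the same approach as the paper: split into $\{R\ge 1\}$ and $\{R\le 1\}$, apply the pointwise asymptotics of Lemma~\ref{fund_sol_asympt_Y} in each region with polar coordinates, and handle the weak-$L^p$ endpoint via the distribution function exactly as in Lemma~\ref{Z_Lp_decay}. In fact you spell out all three near-field subcases and the endpoint estimate in more detail than the paper, which only works out the case $d<2\beta$, $0<\alpha<1$ explicitly and refers the rest to the analogous computations for $Z$.
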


Again, we may use the above estimates to prove a decay result concerning the source term $f$. Here we need to impose a decay condition similar to~\eqref{decay_cond_f} for the source term. We obtain the following proposition. 

\begin{proposition}\label{proposition_source_term}

Let $d \ge 1$, $0<\alpha \le 1$ and \(0<\beta\le 2\). Assume that $u$ is the mild 
solution of equation~\eqref{classical_equation} with $u_0=0$ and $f(t, \cdot) \in 
L^q(\Rn)$ for each $t \ge 0$ and for some $q \in [1, \infty)$. Assume further that \(f\) satisfies the 
decay condition
\begin{equation}\label{decayf}
\|f(t,\cdot)\|_{L^q(\R^d)}\lesssim (1+t)^{-\gamma},\quad t> 0,
\end{equation}
for some $\gamma >0$. Then we have in case $\gamma\neq 1$
\begin{itemize}
\item[(i)] if $1 \le q <\infty$ and $d > q\beta$, we have for every $r \in [q, \frac{qd}{d-q\beta})$ that
\begin{equation}\label{sol_decay1_gen_beta}
\|u(t,\cdot)\|_{L^r(\Rn)}\lesssim t^{\alpha-\min\{1,\gamma\}-\frac{\alpha 
d}{\beta}(\frac{1}{q}-\frac{1}{r})},\quad t>0;
\end{equation}
\item[(ii)] if $1 < q <\infty$ and $ d \le q\beta$, the estimate~\eqref{sol_decay1_gen_beta} holds for every $r \in [q, \infty)$.
\end{itemize}
In the case $\gamma=1$, the assertions (i) and (ii) are valid with (\ref{sol_decay1_gen_beta}) replaced by
\begin{equation}\label{sol_decay1_gen_beta2}
\|u(t,\cdot)\|_{L^r(\Rn)}\lesssim t^{\alpha-1-\frac{\alpha 
d}{\beta}(\frac{1}{q}-\frac{1}{r})}\log(1+t),\quad t>0.
\end{equation}

\end{proposition}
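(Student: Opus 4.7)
The plan is to combine the mild-solution representation $u=Y\hat{*}f$ with Minkowski's integral inequality, Young's convolution inequality~\eqref{Young1}, the $L^p$-decay of $Y$ from Lemma~\ref{Y_decay_estimate}, and the assumption~\eqref{decayf}, so that everything reduces to a one-dimensional convolution-type integral in time.

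Concretely, define $p\in[1,\infty]$ by $1+\frac{1}{r}=\frac{1}{p}+\frac{1}{q}$, which is admissible since $r\ge q$. First I would estimate
\[
\|u(t,\cdot)\|_{L^r}\le \int_0^t \|Y(t-s,\cdot)\star f(s,\cdot)\|_{L^r}\,ds\le \int_0^t \|Y(t-s,\cdot)\|_{L^p}\|f(s,\cdot)\|_{L^q}\,ds,
\]
and then invoke Lemma~\ref{Y_decay_estimate} together with~\eqref{decayf} to obtain
\[
\|u(t,\cdot)\|_{L^r}\lesssim \int_0^t (t-s)^{-\delta}(1+s)^{-\gamma}\,ds,\qquad \delta:=1-\alpha+\frac{\alpha d}{\beta}\Bigl(\frac{1}{q}-\frac{1}{r}\Bigr).
\]
For Lemma~\ref{Y_decay_estimate} to apply we need $p<\kappa_2(\beta,d)$, i.e.\ $\frac{1}{q}-\frac{1}{r}<\frac{2\beta}{d}$; for the time integral to be finite we need $\delta<1$, i.e.\ $\frac{1}{q}-\frac{1}{r}<\frac{\beta}{d}$. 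The latter is strictly stronger, and in regime (i) it translates precisely into $r<\frac{qd}{d-q\beta}$. In regime (ii) the hypothesis $d\le q\beta$ renders $\frac{1}{q}-\frac{1}{r}<\frac{\beta}{d}$ automatic for every $r\in[q,\infty)$, so both conditions hold.

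The remaining step is to evaluate the scalar integral by splitting at $t/2$. On $[0,t/2]$ we have $(t-s)^{-\delta}\lesssim t^{-\delta}$, leaving $\int_0^{t/2}(1+s)^{-\gamma}\,ds$, which is $O(1)$, $O(\log(1+t))$, or $O(t^{1-\gamma})$ according as $\gamma>1$, $\gamma=1$, or $\gamma<1$. On $[t/2,t]$ one has $(1+s)^{-\gamma}\lesssim (1+t)^{-\gamma}$ and $\int_{t/2}^t (t-s)^{-\delta}\,ds\lesssim t^{1-\delta}$. Adding the two contributions and using the identities $-\delta=\alpha-1-\frac{\alpha d}{\beta}(\frac{1}{q}-\frac{1}{r})$ and $1-\gamma-\delta=\alpha-\gamma-\frac{\alpha d}{\beta}(\frac{1}{q}-\frac{1}{r})$ produces exactly the claimed exponent $\alpha-\min\{1,\gamma\}-\frac{\alpha d}{\beta}(\frac{1}{q}-\frac{1}{r})$ for $\gamma\ne 1$, together with the logarithmic factor for $\gamma=1$. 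Small-$t$ contributions are absorbed since the claimed upper bound blows up as $t\downarrow 0$ whenever its exponent is negative. The main obstacle is the careful simultaneous bookkeeping of the three admissibility conditions $p\ge 1$, $p<\kappa_2(\beta,d)$, and $\delta<1$, and the verification that cases (i) and (ii) cover precisely the configurations in which all of them hold.
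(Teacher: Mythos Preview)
Your proposal is correct and follows essentially the same approach as the paper: Minkowski's integral inequality, Young's inequality with $1+\tfrac{1}{r}=\tfrac{1}{p}+\tfrac{1}{q}$, the $L^p$-decay of $Y$ from Lemma~\ref{Y_decay_estimate}, and the split of the time integral at $t/2$, with the constraint $\delta<1$ (equivalently $p<\tfrac{d}{d-\beta}$ when $d>\beta$) dictating the admissible range of $r$. One small remark: your comment that the claimed bound ``blows up as $t\downarrow 0$'' does not cover the case where the exponent is nonnegative (possible for $\gamma<1$), but the argument still goes through since for small $t$ one has $\int_0^t (t-s)^{-\delta}(1+s)^{-\gamma}\,ds\lesssim t^{1-\delta}$ and $1-\delta$ exceeds the claimed exponent by $\min\{1,\gamma\}$.
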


\begin{proof}
The proof is now an easy application of the integral form of the Minkowsky 
inequality, the Young inequality for convolutions and 
Lemma~\ref{Y_decay_estimate}.

Using the Minkowsky inequality, we have
\[
\|u(t,\cdot)\|_{L^r(\Rn)}\le 
\int_0^t\Big(\int_{\Rn}\Big|\int_{\Rn}Y(t-s,x-y)f(s,y)dy\Big|^r dx\Big)^{1/r}ds
\]
for \(1\le r<\infty\). 

Similarly as in the proof of Proposition~\eqref{prop_homog}, we choose $p$ such that
\begin{equation}\label{pqr_cond2}
1+\frac1r=\frac1p+\frac1q.
\end{equation}
Then the Young inequality for convolution yields 
\[
\|u(t,\cdot)\|_{L^r(\Rn)}\leq \int_0^t 
\|Y(t-s,\cdot)\|_{L^p(\Rn)}\|f(s,\cdot)\|_{L^q(\Rn)} ds.
\]
We split the integral into two parts as follows
\begin{equation}\label{splitting}
\begin{split}
&\int_0^t\|Y(t-s,\cdot)\|_{L^p(\Rn)}\|f(s,\cdot)\|_{L^q(\Rn)}ds\\
=&\Big(\int_0^{
t/2 }+\int_{t/2}^t\Big)\|Y(t-s,\cdot)\|_{L^p(\Rn)}\|f(s,\cdot)\|_{L^q(\Rn)}ds=: 
I_1+I_2.
\end{split}
\end{equation}
Recall that by Lemma~\ref{Y_decay_estimate} we have
\begin{equation}\label{Y}
\| Y(t,\cdot)\|_{L^p(\Rn)}\lesssim t^{\alpha-1-\frac{\alpha 
d}{\beta}(1-\frac{1}{p})},\quad t>0,
\end{equation}
for $1 \le p < \kappa_2$ where
\begin{align*}
\kappa_2=\begin{cases} \frac{d}{d-2\beta}, \quad\text{if} \ \ d > 2\beta, \\
\infty, \quad\text{otherwise}. \end{cases}
\end{align*}
If $d >2 q \beta$, for $r \in [q, \frac{qd}{d-2 q\beta}) \supset [q, \frac{qd}{d-q\beta})$ we  obtain from~\eqref{pqr_cond2} that correspondingly $p \in [1, \frac{d}{d-2\beta})$. Therefore, we may use~\eqref{Y} to estimate the $L^p$-norm of $Y$ in~\eqref{splitting}. On the other hand, if $d \le 2q\beta$, the different values of $p \in [1, \kappa_2)$ yield the corresponding choices of $r$ in $[q, \infty)$ and, thus, we may again use~\eqref{Y} to estimate~\eqref{splitting}. 

We continue the estimate by using~\eqref{Y}. For the first integral we observe that \(\frac{t}{2}\le t-s\le t\) and hence 
~\eqref{Y} together with the decay 
condition~(\ref{decayf}) implies
\[
I_1\lesssim t^{\alpha-1-\frac{\alpha 
d}{\beta}(1-\frac{1}{p})}\int_0^{t/2}\|f(s,\cdot)\|_{L^q(\Rn)}ds\lesssim 
t^{\alpha-1-\frac{\alpha 
d}{\beta}(1-\frac{1}{p})}\int_0^t (1+s)^{-\gamma}ds,
\]
which gives the desired estimate for \(I_1\).

In the second integral we need to take care of the singularity of 
\(Y(t,\cdot)\) at \(t=0\). The integral converges if and only if
\[
\alpha-1-\frac{\alpha d}{\beta}(1-\frac{1}{p})>-1.
\]
This gives $1 \le p <\frac{d}{d-\beta}$, provided $d > \beta$. If $d=\beta$, this holds for all $p \in [1, \infty)$, and if $d < \beta$, this estimate is always true. Observe that this restriction gives the different choices of $r$ in the items (i) and (ii) of the claim. We obtain
\[
\begin{split}
I_2&\lesssim \int_{t/2}^t(1+s)^{-\gamma}\|Y(t-s,\cdot)\|_{L^p(\Rn)}ds\lesssim 
t^{-\gamma}\int_0^{t/2}s^{\alpha-1-\frac{\alpha 
d}{\beta}(1-\frac{1}{p})}ds\\
&\lesssim t^{\alpha-\gamma-\frac{\alpha 
d}{\beta}(1-\frac{1}{p})},
\end{split}
\]
for all $\gamma>0$. So $I_2$ decays faster than $I_1$ if $\gamma\ge 1$, whereas for $\gamma\in (0,1)$ we obtain
the same decay rates. Observe also that, similarly as in Proposition~\ref{prop_homog}, the restriction $1 \le p < \frac{d}{d-\beta}$ plays a role only if $d \ge q\beta$. In this case, we obtain directly from~\eqref{pqr_cond2} that $r \in [1, \frac{qd}{d-q\beta})$. 
\end{proof}

The last step towards the proof of Theorem~\ref{zuazua} is the following gradient $L^p$-estimate for $Z$.

\begin{lemma}\label{G_Lp_gradient}
Let $d \in\mathbb{Z}_+$ and \(\kappa_{1}(\beta, d)\) be as in 
Section~\ref{section:preliminaries}. Then $\nabla Z(t,\cdot)$ belongs to 
$L^p(\Rn; \Rn)$ for all $t>0$ and $1\le p < \kappa_1(\beta, d)$, and 
there holds
\begin{equation}\label{gradient_Lp}
\|\nabla Z(t,\cdot)\|_{L^p(\Rn;\Rn)} \lesssim 
t^{-\frac{\alpha}{\beta}-\frac{\alpha d}{\beta}\l(1-\frac{1}{p}\r)}, \quad t>0.
\end{equation}
The estimate~\eqref{gradient_Lp} remains valid for $d=1$, $\beta=2$ and 
$p=\infty$. 

Moreover, if $p=\kappa_1(\beta, d)$, then we have that $\nabla 
Z(t,\cdot)$ belongs to $L^{p, \infty}(\Rn; \Rn)$ for all $t>0$ and
\[
\|\nabla Z(t,\cdot)\|_{L^{p, \infty}(\Rn;\Rn)} \lesssim t^{-\alpha}, \quad t>0. 
\]
\begin{proof}
The proof is very similar to that of Lemma~\ref{Z_Lp_decay}. Let $R=|x|^{\beta}t^{-\alpha}$ be the similarity variable. Let's first divide the object of our study into two parts:
\begin{align*}
&\int_{\Rn} |\nabla Z(t,x)|^p \, dx \\
&= \int_{\{R \le 1\}} |\nabla Z(t,x)|^p \, dx + \int_{\{R \ge 1\}} |\nabla Z(t,x)|^p \, dx =: I_1+I_2.
\end{align*}
For the first term, we may use Lemma~\ref{G_gradient} to get

\begin{equation}\label{badterm}
\begin{split}
I_1 =\int_{\{R \le 1\}} |\nabla Z(t,x)|^p \, dx &\lesssim \int_{\{|x| \le t^{\alpha/\beta}\}} |x|^{(-d+\beta+1)p}t^{-\alpha p} \, dx \\
& \lesssim t^{-\alpha p} \int_{0}^{t^{\alpha / \beta}} r^{(-d+\beta-1)p+d-1} \, 
dr\\
&\lesssim t^{-\frac{\alpha p}{\beta}-\frac{\alpha d}{\beta}(p-1)}.
\end{split}
\end{equation}
provided the last integral is finite, that is
\[
1\le p<\kappa_1(\beta, d).
\]

For the second term we again use Lemma~\ref{G_gradient} and obtain
\begin{equation}\label{goodterm}
\begin{split}
I_2 =\int_{\{R \ge 1\}} |\nabla Z(t,x)|^p \, dx &\lesssim  \int_{\{|x| \ge t^{\alpha/\beta}\}} |x|^{-(d+\beta+1)p}t^{\alpha p} \, dx \\
& \lesssim  t^{\alpha p} \int_{\{r  \ge t^{\alpha /\beta}\}}r^{-(d+\beta+1)p+d-1} \, dr \\
&\lesssim t^{-\frac{\alpha p}{\beta}-\frac{\alpha d}{\beta}(p-1)}
\end{split}
\end{equation}
for all \(1\le p<\infty\). Thus we obtain the first part of the lemma. 

If \(\beta\ge d+1\), which is equivalent with \(\beta=2\) and \(d=1\), we see 
from Lemma~\ref{G_gradient} that \(\nabla Z(t,\cdot)\) is indeed bounded. 
Therefore the second statement holds as well.

Let now 
$p=\kappa_1(\beta, d)$. 
Similarly as~\eqref{weak_triangle}, we obtain
\begin{align*}
&\|\nabla Z(t,x)(t,\cdot)\|_{L^{p,\,\infty}} \\
& \le 2\left(\|\nabla Z(t,x)(t, \cdot)\chi_{\{R\le 1\}}(t)\|_{L^{p,\,\infty}}+
\|\nabla Z(t,x)(t,\cdot)\chi_{\{R\ge 1\}}(t)\|_{L^{p,\,\infty}}\right).
\end{align*}
Employing estimate~\eqref{goodterm} gives
\begin{align*}
\|\nabla Z(t,x) \chi_{\{R\ge 1\}}(t)\|_{L^{p,\infty}}
&\le \|\nabla Z(t,x) \chi_{\{R\ge 1\}}(t)\|_{L^{p}} \\
&\lesssim t^{-\frac{\alpha}{\beta} -\frac{\alpha d }{\beta }\,\left(1-\frac{1}{p}\right)}
\lesssim t^{-\alpha}.
\end{align*}
For the term with $R\le 1$ we use Lemma~\ref{G_gradient} to estimate as follows.
\begin{align*}
d_{|\nabla Z(t,x)\chi_{\{R\le 1\}}(t)|}(\lambda) & =|\{x\in \Rn:\,|\nabla Z(t,x)|>\lambda\;\mbox{and}\;R\le 1\}|\\
& \le |\{x\in \Rn:\,\lambda< C t^{-\alpha} |x|^{-d+\beta-1}\}|\\
& = |\{x\in \Rn:\,|x|< \left(C t^{-\alpha}\lambda^{-1}\right)^{\frac{1}{d-\beta+1}}\}|\\
& \le C_1 \left( t^{-\alpha}\lambda^{-1}\right)^{\frac{d}{d-\beta+1}}.
\end{align*}
This shows that
\[ d_{|\nabla Z(t,x)\chi_{\{R\le 1\}}(t)|}(\lambda)^{1/p}\le C_1^{1/p} t^{-\alpha}\lambda^{-1},
\]
and thus
\[
\|\nabla Z(t,x)\chi_{\{R\le 1\}}(t)\|_{L^{p,\,\infty}}\lesssim\,t^{-\alpha},
\]
which finishes the proof.
\end{proof}
\end{lemma}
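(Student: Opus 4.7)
The plan is to reduce everything to the pointwise asymptotics of $|\nabla Z(t,x)|$ already established in Lemma~\ref{G_gradient} and then integrate in polar coordinates using the similarity variable $R=|x|^\beta t^{-\alpha}$, splitting the integration domain into the two complementary regions $\{R\le 1\}$ and $\{R\ge 1\}$. This is directly analogous to the argument used for $Z$ itself in Lemma~\ref{Z_Lp_decay}, so the structural outline is forced by the two-scale nature of the gradient asymptotics.

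Concretely, I write
\[
\|\nabla Z(t,\cdot)\|_{L^p}^p = \int_{\{R\le 1\}}|\nabla Z(t,x)|^p\,dx + \int_{\{R\ge 1\}}|\nabla Z(t,x)|^p\,dx =: I_1+I_2.
\]
On $\{R\le 1\}$, equivalent to $|x|\le t^{\alpha/\beta}$, Lemma~\ref{G_gradient} gives $|\nabla Z|\lesssim t^{-\alpha}|x|^{-d-1+\beta}$, so polar coordinates yield
\[
I_1\lesssim t^{-\alpha p}\int_0^{t^{\alpha/\beta}} r^{(-d-1+\beta)p+d-1}\,dr,
\]
which converges exactly when $(d+1-\beta)p<d$, that is $p<\kappa_1(\beta,d)$, and then evaluates to a constant times $t^{-\alpha[p(d+1)-d]/\beta}$. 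On $\{R\ge 1\}$, the estimate $|\nabla Z|\lesssim t^{\alpha}|x|^{-d-1-\beta}$ gives convergence for every $p\ge 1$, and the same exponent $t^{-\alpha[p(d+1)-d]/\beta}$ comes out. Taking $p$-th roots produces exactly the claimed rate $t^{-\alpha/\beta-(\alpha d/\beta)(1-1/p)}$. The borderline $d=1,\beta=2$ (where $\kappa_1=\infty$) is handled separately by observing from Lemma~\ref{G_gradient} that $|\nabla Z|$ is actually bounded in this case, since then $-d-1+\beta=0$.

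For the endpoint $p=\kappa_1$ the $R\le 1$ integral diverges logarithmically and one has to pass to the weak $L^p$-norm. I mimic the weak-type computation at the end of Lemma~\ref{Z_Lp_decay}: by quasi-triangle inequality
\[
\|\nabla Z(t,\cdot)\|_{L^{p,\infty}}\lesssim \|\nabla Z(t,\cdot)\chi_{\{R\ge 1\}}\|_{L^{p,\infty}}+\|\nabla Z(t,\cdot)\chi_{\{R\le 1\}}\|_{L^{p,\infty}}.
\]
The first summand is dominated by its $L^p$-norm, and the $I_2$-bound above with $p=\kappa_1$ gives $t^{-\alpha}$ after simplification. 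For the second summand I estimate the distribution function directly: using $|\nabla Z(t,x)|\lesssim t^{-\alpha}|x|^{-d-1+\beta}$ on $\{R\le 1\}$,
\[
|\{x:|\nabla Z(t,x)|>\lambda,\;R\le 1\}|\le |\{|x|<(Ct^{-\alpha}\lambda^{-1})^{1/(d-\beta+1)}\}|\lesssim (t^{-\alpha}\lambda^{-1})^{d/(d-\beta+1)},
\]
so $\lambda\,d_{|\nabla Z|\chi_{\{R\le 1\}}}(\lambda)^{1/p}\lesssim t^{-\alpha}$, which is the required weak-type bound.

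The only genuinely delicate point is keeping track of the exact algebraic exponents, in particular verifying that $I_1$ and $I_2$ contribute the same power of $t$ (this is what makes the final rate clean) and that the threshold $\kappa_1(\beta,d)=d/(d-\beta+1)$ arises precisely from the integrability condition $(d+1-\beta)p<d$ near the origin. The weak $L^p$ step is then a mechanical adaptation of the earlier Lemma~\ref{Z_Lp_decay}, with the gradient exponent $-d-1+\beta$ replacing $-d+\beta$; no new idea is required.
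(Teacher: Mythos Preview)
Your proposal is correct and follows essentially the same approach as the paper: split along $\{R\le 1\}$ and $\{R\ge 1\}$, apply the pointwise gradient bounds from Lemma~\ref{G_gradient}, integrate in polar coordinates to extract the exponent and the threshold $p<\kappa_1(\beta,d)$, and for the endpoint $p=\kappa_1$ pass to the weak norm via the quasi-triangle inequality together with a direct distribution-function estimate on $\{R\le 1\}$. The only differences are cosmetic (you combine the exponent as $-\alpha[p(d+1)-d]/\beta$ rather than $-\alpha p/\beta-\alpha d(p-1)/\beta$).
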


\subsection{Proof of Theorem~\ref{zuazua}}

Now we are ready to prove Theorem~\ref{zuazua}.

\begin{proof}[Proof of Theorem~\ref{zuazua}]

We split the proof into two parts. We first study the estimates for $Z$. The estimate for $Y$ is substantially more involved and we do it after studying $Z$.

\noindent {\em The estimates for $Z$:} The strategy of the proof here is the same as in \cite[p.\ 14, 15]{Zuaz03}. Suppose first that $u_0\in L^1(\Rn)$ is such that $\int_{\Rn} |x|\,|u_0(x)|\,dx<\infty$.
By Lemma \ref{decomp} there exists $\phi\in L^1(\Rn;\Rn)$ such that
\[
u_0=M_{init}\delta_0+\mbox{ div}\,\phi
\]
and $\|\phi\|_{L^1}\le C_d\||x|u_0\|_{L^1}$. Consequently,
\begin{align*}
u_{init}(t,x)&=M_{init}\left(Z(t,\cdot)\star \delta_0\right)(x)+\left(Z(t,\cdot)\star \mbox{ div}\,\phi(\cdot)\right)(x)\\
& =M_{init} Z(t,x)+(\nabla Z(t,\cdot)\star \phi)(x),
\end{align*}
which yields
\begin{equation} \label{asym1}
u_{init}(t,x)-M_{init} Z(t,x)=(\nabla Z(t,\cdot)\star \phi)(x).
\end{equation}
By Young's inequality it follows that for any $1\le p<\kappa_1(\beta, d)$
\begin{align*}
\|u_{init}(t, \cdot)-M_{init}Z(t, \cdot)\|_{L^p}\le \|\nabla Z(t, \cdot)\|_{L^p} \|\phi\|_1 &\lesssim  \|\nabla Z(t, \cdot)\|_{L^p} \||x|u_0\|_{L^1} \\
&\lesssim
t^{-\frac{\alpha}{\beta}-\frac{\alpha d}{\beta}\l(1-\frac{1}{p}\r)},
\end{align*}
where we used Lemma \ref{G_Lp_gradient}. Hence
\[
t^{\frac{\alpha d }{
\beta}\,\left(1-\frac{1}{p}\right)}\|u_{init}(t, \cdot)-M_{init}Z(t, \cdot)\|_{L^p}\lesssim t^{ -\frac{\alpha}{\beta}},
\]
which is the first part of assertion (ii). The second part follows from
(\ref{asym1}) by applying Young's inequality for weak $L^p$-spaces \cite[Theorem 
1.2.13]{Graf04}.

To prove (i) we choose a sequence $(\eta_j)\subset C_0^\infty(\Rn)$ such that
$\int_{\Rn} \eta_j\,dx=M_{init}$ for all $j$ and $\eta_j \rightarrow u_0$ in $L^1(\Rn)$. For each $j$ by Part (a) and by Lemma~\ref{Z_Lp_decay} we obtain
\begin{align*}
&\|u_{init}(t, \cdot)-M_{init}Z(t, \cdot)\|_{L^p} \\
& \le \|Z(t, \cdot)\star(u_0-\eta_j)\|_{L^p}+
  \|Z(t, \cdot)\star \eta_j-M_{init}Z(t, \cdot)\|_{L^p}\\
  & \le  \|Z(t, \cdot)\|_{L^p} \|u_0-\eta_j\|_{L^1}+C(j)\,t^{ -\frac{\alpha}{\beta} -\frac{\alpha d }{\beta}\,\left(1-\frac{1}{p}\right)}\\
  & \le C_1 t^{ -\frac{\alpha d }{\beta}\,\left(1-\frac{1}{p}\right)}\|u_0-\eta_j\|_{L^1}+
C(j)\,t^{ -\frac{\alpha}{\beta} -\frac{\alpha d }{\beta}\,\left(1-\frac{1}{p}\right)},
\end{align*}
and therefore
\[
t^{ \frac{\alpha d }{\beta}\,\left(1-\frac{1}{p}\right)}\|u_{init}(t, \cdot)-M_{init}Z(t, \cdot)\|_{L^p}\le C_1\|u_0-\eta_j\|_{L^1}+C(j)\,t^{ -\frac{\alpha}{\beta}},
\]
which implies
\[
\limsup_{t\to \infty}\,t^{ \frac{\alpha d }{\beta}\,\left(1-\frac{1}{p}\right)}\|u_{init}(t, \cdot)-M_{init}Z(t, \cdot)\|_{L^p}\le C_1\|u_0-\eta_j\|_{L^1}.
\]
Assertion (i) follows by sending $j\to \infty$. This finishes the decay estimates for $Z$. We continue with $Y$. \\

\noindent{\em The estimate for $Y$:} Next we turn to study $u_{forc}$. We split \(M_{forc}\) into two parts as follows
\[
M_{forc}=\int_{0}^t\int_{\R^d} f(\tau,y)\, d y\, d\tau+\int_{t}^\infty\int_{\R^d} 
f(\tau,y)\, d y\, d\tau 
\]
and note that
\[
\begin{split}
&t^{1+\frac{\alpha 
d}{\beta}(1-\frac{1}{p})-\alpha}\Big\|Y(t,\cdot)\int_{t}^\infty\int_{\R^d} 
f(\tau,y)\, d 
y\, d\tau\Big\|_{L^p} \\
\le& t^{1+\frac{\alpha 
d}{\beta}(1-\frac{1}{p})-\alpha}\|Y(t,\cdot)\|_{L^p}\int_t^\infty\int_{\R^d} 
|f(\tau,y)|\, d y\, d\tau\\
\le&\int_t^\infty\int_{\R^d} 
|f(\tau,y)|\, d y\, d\tau \to 0 
\end{split}
\]
as \(t\to\infty\). Here we used Lemma~\ref{Y_decay_estimate} to obtain
\[
\|Y(t,\cdot)\|_{L^p}\sim t^{\alpha-\frac{\alpha 
d}{\beta}(1-\frac{1}{p})-1},\quad t\to\infty.
\]

Therefore it suffices to prove that
\[
t^{1+\frac{\alpha 
d}{\beta}(1-\frac{1}{p})-\alpha}\Big\|\int_0^t 
(Y(t-\tau,\cdot)\star f(\tau,\cdot))\, d\tau-Y(t,\cdot)\int_0^t\int_{\R^d}f(\tau,
y)\, d y\, d\tau\Big\|_{L^p}\to 0,
\]
as \(t\to\infty\).

To prove the assertion, we fix \(0<\delta<\frac12\), and decompose the set of 
integration \((0,t)\times\R^d\) into two parts
\begin{align*}
\Omega_1(t)&=(0,\delta t)\times\{ y\in\R^d\,:\,|y|\le (\delta 
t)^{\alpha/\beta}\},\\
\Omega_2(t)&=(0,t)\times\R^d\setminus \Omega_1(t).
\end{align*}

Let us start with the set \(\Omega_1(t)\). We estimate by using the integral form of the Minkowsky inequality in the case \(1\le 
p<\infty\) to obtain
\begin{equation} \label{minkow}
\begin{split}
&\Big\|\iint_{\Omega_1(t)} 
[Y(t-\tau,\cdot-y)-Y(t,\cdot)]f(\tau,y)\, d y\, d\tau\Big\|_{L^p}\\
\le&\iint_{\Omega_1(t)}\Big\| 
Y(t-\tau,\cdot-y)-Y(t,\cdot)\Big\|_{L^p}|f(\tau,y)|\, d y\, d\tau.
\end{split}
\end{equation}
 If \(p=\infty\), the same estimate holds trivially.
Note that in \(\Omega_1(t)\) we have \(t\ge t-\tau\ge t(1-\delta)\ge\frac12 
t\), so \(t-\tau\) and \(t\) are comparable and there is no singularity in 
\(\tau\). Our aim is to prove that the \(L^p\)-norm on the left-hand side of (\ref{minkow}) tends to \(0\) as 
\(\delta\to 0\) {\em uniformly in \(t\)}. To achieve this, we distinguish two different cases w.r.t.~$x\in \R^d$
when looking at the $L^p$-norm on the right-hand side of (\ref{minkow}):
\begin{itemize}
 \item[(i)] \(|x-y|\le 2(\delta t)^{\alpha/\beta}\),
\item[(ii)] \(|x-y|>2(\delta t)^{\alpha/\beta}\).
\end{itemize}
Observe that this splitting seems to be needed. If we simply estimate the \(L^p\)-norm by 
the triangle inequality
\begin{equation}\label{triangle_est1}
\Big\| 
Y(t-\tau,\cdot-y)-Y(t,\cdot)\Big\|_{L^p}\le \Big\| 
Y(t-\tau,\cdot-y)\Big\|_{L^p}+\Big\|Y(t,\cdot)\Big\|_{L^p}=:I_1+I_2,
\end{equation}
we would get a bound
\[
I_1+I_2\lesssim t^{\alpha-\frac{\alpha 
d}{\beta}(1-\frac{1}{p})-1},
\]
which is of a right form but the problem is that this quantity does not 
converge to zero as \(\delta\to 0\) which is what we are after. Therefore we need to do the estimates more 
carefully.

The motivation for the splitting is that in the case (i) both \(|x-y|\) and 
\(|x|\) are bounded from above by a multiple of \((\delta t)^{\alpha/\beta}\). 
In 
this case we will simply use the triangle inequality~\eqref{triangle_est1}. The 
second case (ii) is more complicated, but here we will proceed as follows. Since 
there are differences both in the space and the time variable, we treat the 
differences separately by using the triangle inequality:
\begin{equation}\label{triangle_est2}
\begin{split}
\Big\| 
Y(t-\tau,\cdot-y)-Y(t,\cdot)\Big\|_{L^p}\le & \,\Big\| 
Y(t-\tau,\cdot-y)-Y(t-\tau,\cdot)\Big\|_{L^p}\\
&+\Big\|Y(t-\tau,\cdot)-Y(t,\cdot)\Big\|_{L^p}\\
=:& \, I_3+I_4.
\end{split}
\end{equation}

In both of these we shall use the Mean Value Theorem. Note that in the case 
(ii) we 
are away from the singularities of \(x\) and \(t\), since \(t-\tau\ge\frac12t\) 
and \(|x|=|x-y+y|\ge |x-y|-|y|\ge (\delta t)^{\alpha/\beta}\).

Since the asymptotic behavior of \(Y\) is different for each \(d\), we 
consider here only 
the case \(d>2\beta\) and \(0<\alpha<1\). The other cases can be treated 
similarly, since the proof is based only on the pointwise 
estimates for \(Y\), $\nabla Y$, and $\partial_t Y$ given in Lemmas~\ref{fund_sol_asympt_Y} and~\ref{G_gradient}.

We start with the case (i). Note that for \((\tau,y)\in\Omega_1(t)\) we have
\[
\frac{|x-y|}{(t-\tau)^{\alpha/\beta}}\le\frac{2\delta^{\alpha/\beta}}{
(1-\delta)^{\alpha/\beta}},
\]
so we may use the asymptotic behavior of Lemma~\ref{fund_sol_asympt_Y} for small values of the similarity 
variable \(R\) to obtain
\begin{equation}\label{Y_asympt1}
|Y(t,x)|\lesssim t^{-\alpha-1}|x|^{-d+2\beta}.
\end{equation}
As mentioned before, we 
use~\eqref{triangle_est1} and~\eqref{Y_asympt1} to obtain
\[
I_1\lesssim t^{-\alpha-1}\Big(\int_{|x-y|\le 2(\delta t)^{\alpha/\beta}} 
|x-y|^{(-d+2\beta)p}\, d x\Big)^{1/p}.
\]
Introducing the spherical coordinates gives the desired estimate
\[
I_1\lesssim 
\delta^{\frac{\alpha}{\beta}(-d+2\beta+\frac{d}{p})}t^{\alpha-\frac{
\alpha 
d}{\beta}(1-\frac{1}{p})-1}.
\]
Notice that the assumption $p \in [1, \kappa_2)$ guarantees the integrability and 
the positivity of the power of \(\delta\), which is needed in the end. The same 
proof applies also for \(I_2\).

Now we shall provide the estimate in the second case (ii). Since we are going 
to use the Mean Value Theorem, we need to calculate the derivatives of the 
fundamental solution \(Y\). We recall the following estimates from Lemma~\ref{G_gradient} for $d > 2\beta$:
\begin{equation}\label{Y_asympt2}
|\nabla Y(t,x)|\lesssim t^{2\alpha-1}|x|^{-\beta-d-1},\quad |x|^\beta 
t^{-\alpha}\ge1,
\end{equation}
and
\begin{equation}\label{Y_asympt21}
|\nabla Y(t,x)|\lesssim t^{-\alpha-1}|x|^{-d-1+2\beta},\quad |x|^\beta 
t^{-\alpha}\le 1.
\end{equation}

By using the Mean Value Theorem for \(I_3\) we obtain
\[
I_3=|y|\| \nabla Y(t-\tau,\tilde{x}(\cdot))\|_{L^p}
\]
for some \(\tilde{x}\) on the line between \(x-y\) and \(x\), where \(x\) 
denotes the integration variable.

Since 
\[
\begin{split}
|\tilde{x}|=|x-y+\tilde{x}-(x-y)|&\ge|x-y|-|\tilde{x}-(x-y)|\\
&\ge |x-y|-|y|\ge\frac{|x-y|}{2},
\end{split}
\]
we have
\begin{equation}\label{tilde_est}
\frac{|\tilde{x}|}{(t-\tau)^{\alpha/\beta}}\ge\frac{|x-y|}{
2t^{\alpha/\beta}}\ge{\delta^{\alpha/\beta}}.
\end{equation}

Notice, that since \(\delta\) can be small, we have to use the asymptotics near 
zero and near infinity. Therefore we divide the integral \(I_3\) into two parts 
\(I_{31}\) and \(I_{32}\) depending on whether \(|\tilde{x}|^\beta 
(t-\tau)^{-\alpha}\) is less than \(1\) or greater than \(1\).

In \(I_{32}\) we use
\[
|\tilde{x}|\le |x-y|+|y|\le\frac32|x-y|,
\]
so the set of integration is contained in the set
\[
\Big\{ x\in\R^d\,:\, |x-y|\ge \frac23(t-\tau)^{\alpha/\beta}\Big\},
\]
which implies the estimate
\[
\begin{split}
I_{32}&\lesssim (\delta t)^{\alpha/\beta}\Big(\int_{|x-y|\ge 
\frac23(t-\tau)^{\alpha/\beta}} 
(t-\tau)^{(2\alpha-1)p}|\tilde{x}|^{(-d-1-\beta)p}\, d x\Big)^{1/p}\\
&\lesssim \delta^{\alpha/\beta}t^{\alpha/\beta+2\alpha-1}\Big(\int_{|x-y|\ge 
\frac23(t-\tau)^{\alpha/\beta}} 
|x-y|^{(-d-1-\beta)p}\, d x\Big)^{1/p}.
\end{split}
\]

Introducing spherical coordinates gives the estimate
\[
I_{32}\lesssim \delta^{\alpha/\beta} t^{\alpha-\frac{\alpha 
d}{\beta}(1-\frac{1}{p})-1},
\]
which is of the form we need.

For \(I_{31}\) we note that by~\eqref{tilde_est} the set of integration is 
contained in the set
\[
\Big\{x\in\R^d\,:\,\delta^{\alpha/\beta}\le 
\frac{|x-y|}{ 2(t-\tau)^ { \alpha/\beta } } \le 1\Big\},
\]
so by using~\eqref{Y_asympt21} we obtain
\[
I_{31}\lesssim (\delta t)^{\alpha/\beta}\Big(\int_{\delta^{\alpha/\beta}\le 
\frac{|x-y|}{ 2(t-\tau)^ { \alpha/\beta } } \le 1} 
(t-\tau)^{(-\alpha-1)p}|\tilde{x}|^{(-d-1+2\beta)p}\, d x\Big)^{1/p}.
\]
Once again we use the fact that \(|\tilde{x}|\) and 
\(|x-y|\) are comparable. We may proceed as before except we have to separate 
two cases: (a) \((-d-1+2\beta)p=-d\) or (b) \((-d-1+2\beta)p\neq-d\). An easy 
calculation shows that the first case is possible only in the case 
\(\beta\ge\frac12\). The case (a) leads to a logarithmic function. Indeed, we 
may estimate
\[
\begin{split}
I_{31}&\lesssim 
\delta^{\alpha/\beta}t^{\alpha/\beta-\alpha-1}\Big(\int_{\delta^{\alpha/\beta}
\le 
\frac{|x-y|}{ 2(t-\tau)^ { \alpha/\beta } } \le 1} 
|x-y|^{(-d-1+2\beta)p}\, d 
x\Big)^{1/p}\\
&\lesssim\delta^{\alpha/\beta}\big|\log\delta 
\big|^{1/p}t^{\alpha/\beta-\alpha-1}.
\end{split}
\]

A simple arithmetic calculation shows that the power of \(t\) is actually
\[
\frac{\alpha}{\beta}-\alpha-1=\alpha-\frac{\alpha d}{\beta}(1-\frac{1}{p})-1,
\]
which is exactly of the right form and the factor depending on \(\delta\) tends 
to zero as \(\delta\to 0\) uniformly in \(t\).

The assumption $p \in [1, \kappa_2)$ leads to a usual power function similarly as before. We 
omit the details and write the final estimate
\begin{equation}\label{I_31_est_final}
I_{31}\lesssim 
\Big|\delta^{\frac{\alpha}{\beta}}-\delta^{2\alpha-\frac{\alpha 
d}{\beta}(1-\frac{1}{p})}\Big|t^{\alpha-\frac{\alpha 
d}{\beta}(1-\frac{1}{p})-1}.
\end{equation}
Again the assumption $p \in [1, \kappa_2)$ guarantees that the second power of \(\delta\) 
is positive, so we have obtained the desired estimate also in this case.

For \(I_4\) we use again the Mean Value Theorem to obtain
\[
I_4=\tau\|\partial_t Y(\tilde{t},\cdot)\|_{L^p}
\]
for some \(\tilde{t}\in(t-\tau,t)\). Note that in \(\Omega_1(t)\) \(t\) and 
\(\tilde{t}\) are comparable: \((1-\delta)t\le\tilde{t}\le t\).

Now \(|x|=|x-y+y|\ge |x-y|-|y|\ge (\delta t)^{\alpha/\beta}\), so 
\begin{equation}\label{tilde_z_est}
\tilde{z}:=\frac{|x|}{\tilde{t}^{\alpha/\beta}}\ge\frac{|x|}{t^{
\alpha/\beta}}\ge\frac { (\delta 
t)^{\alpha/\beta}}{t^{\alpha/\beta}}=\delta^{\alpha/\beta}
\end{equation}
and again we have two cases, since \(\delta\) can be small. We denote the 
integrals by \(I_{41}\) and \(I_{42}\) depending on whether \(\tilde{z}\le 1\) 
or \(\tilde{z}\ge 1\). 

Again, we recall from Lemma~\ref{G_gradient} the estimates
\begin{equation}\label{Y_der_t_est1}
|\partial_t Y(t,x)|\lesssim t^{-\alpha-2}|x|^{-d+2\beta},\quad 
\frac{|x|^\beta}{t^\alpha}\le1,
\end{equation}
and
\begin{equation}\label{Y_der_t_est2}
|\partial_t Y(t,x)|\lesssim t^{2\alpha-2}|x|^{-d-\beta},\quad 
\frac{|x|^{\beta}}{t^\alpha}\ge 1.
\end{equation}

The estimates~\eqref{tilde_z_est} and~\eqref{Y_der_t_est1} now give for \(I_{41}\) that
\[
I_{41}\lesssim \delta t\Big(\int_{t^{\alpha/\beta}\ge |x|\ge 
(\delta t)^{\alpha/\beta}} \tilde{t}^{(-\alpha-2)p}|x|^{(-d+2\beta)p}\, d 
x\Big)^{1/p}.
\]
By changing the 
variables \(x\leftrightarrow \frac{x}{t^{\alpha/\beta}}=:z\), we 
obtain
\[
\begin{split}
I_{41}&\lesssim \delta t^{\alpha-\frac{\alpha 
d}{\beta}(1-\frac{1}{p})-1}\Big(\int_{\delta^{\alpha/\beta}\le |z|\le 
1} |z|^{(-d+2\beta)p}\, d z\Big)^{1/p}\\
&\lesssim 
\Big|\delta-\delta^{1+2\alpha-\frac{\alpha d}{\beta}(1-\frac{1}{p})}\Big| 
t^{\alpha-\frac{\alpha 
d}{\beta}(1-\frac{1}{p})-1}.
\end{split}
\]
Since the powers of \(\delta\) are even better than in~\eqref{I_31_est_final}, 
we have derived the desired estimate for \(I_{41}\).

For \(I_{42}\) we observe that
\[
1\le \tilde{z}\le\frac{|x|}{((1-\delta)t)^{\alpha/\beta}}
\]
which implies
\[
 |x|\ge 
((1-\delta)t)^{\alpha/\beta}.
\]
We use~\eqref{Y_der_t_est2} to obtain
\[
I_{42}\lesssim \delta 
t\Big(\int_{\frac{|x|}{t^{\alpha/\beta}}\ge(1-\delta)^{\alpha/\beta}} 
\tilde{t}^{(2\alpha-2)p}|x|^{(-d-\beta)p}\, d x\Big)^{1/p}.
\]
Making the obvious change of variables \(x\leftrightarrow 
\frac{x}{t^{\alpha/\beta}}=:z\) we end up with the estimate
\[
I_{42}\lesssim \delta t^{\alpha-\frac{\alpha 
d}{\beta}(1-\frac{1}{p})-1}
\]
similarly as before.



Collecting all above we see that
\[
t^{1+\frac{\alpha 
d}{\beta}(1-\frac{1}{p})-\alpha}\Big\|\iint_{\Omega_1(t)} 
(Y(t-\tau,\cdot-y)-Y(t,\cdot))f(\tau,y)\, d 
y\, d\tau\Big\|_{L^p}\lesssim\delta^{\eta}
\|f\|_1
\]
for some positive number \(\eta\). The upper bound tends to zero as \(\delta\to 
0\) {\em uniformly in \(t\)}.

We now fix \(\delta_0<\frac12\) such that the previous term is small and 
continue to estimate the norm
\[
t^{1+\frac{\alpha 
d}{\beta}(1-\frac{1}{p})-\alpha}\Big\|\iint_{\Omega_2(t)} 
(Y(t-\tau,\cdot-y)-Y(t,\cdot))f(\tau,y)\, d 
y\, d\tau\Big\|_{L^p}.
\]

Using the integral form of the Minkowsky inequality we have
\[
\begin{split}
&t^{1+\frac{\alpha 
d}{\beta}(1-\frac{1}{p})-\alpha}\Big\|\iint_{\Omega_2(t)} 
(Y(t-\tau,\cdot-y)-Y(t,\cdot))f(\tau,y)\, d y\, d\tau\Big\|_{L^p}\\
\le&t^{1+\frac{\alpha 
d}{\beta}(1-\frac{1}{p})-\alpha}\iint_{\Omega_2(t)}\Big\| 
Y(t-\tau,\cdot-y)\Big\|_{L^p}|f(\tau,y)|\, d y\, d\tau\\
&+t^{1+\frac{\alpha 
d}{\beta}(1-\frac{1}{p})-\alpha}\iint_{\Omega_2(t)}\Big\| 
Y(t,\cdot)\Big\|_{L^p}|f(\tau,y)|\, d y\, d\tau\\
=:&I_5+I_6
\end{split}
\]

By Lemma~\ref{Y_decay_estimate} we have that\(\|Y(t,\cdot)\|_{L^p}\sim 
t^{\alpha-\frac{\alpha 
d}{\beta}(1-\frac{1}{p})-1}\) and, therefore, we may directly estimate \(I_6\) by
\[
I_6\lesssim \iint_{\Omega_2(t)}|f(\tau,y)|\, d y\, d\tau\to 0,
\]
as \(t\to\infty\). 

For \(I_5\) we have two possibilities: either \(\tau\le \delta_0 t\) or 
\(\tau\ge \delta_0t\). According to this we split the domain 
\(\Omega_2^{(0)}(t)\) into two parts:
\[
\Omega_2^{(0)}(t)=(0,\delta_0 t)\times \{y\in\R^d\,:\, |y|\ge (\delta 
t)^{\alpha/\beta}\}\cup (\delta_0 t,t)\times\R^d,
\]
where \((0)\) indicates the fact that we have fixed \(\delta=\delta_0\). 

Hence, \(I_5\) can be written as
\[
\begin{split}
I_5=&t^{1+\frac{\alpha 
d}{\beta}(1-\frac{1}{p})-\alpha}\int_{0}^{\delta_0 t}\int_{|y|\ge(\delta_0 
t)^{\alpha/\beta}} 
\|Y(t-\tau,\cdot-y)\|_{L^p}|f(\tau,y)|\, d y\, d\tau\\
&+t^{1+\frac{\alpha 
d}{\beta}(1-\frac{1}{p})-\alpha}\int_{\delta_0 t}^{t}\int_{\R^d} 
\|Y(t-\tau,\cdot-y)\|_{L^p}|f(\tau,y)|\, d y\, d\tau.
\end{split}
\]

We use the same bound \(\|Y(t,\cdot)\|_{L^p}\lesssim t^{\alpha-\frac{\alpha 
d}{\beta}(1-\frac{1}{p})-1}\) as above for both integrals. Then the first 
integral is dominated by
\[
\begin{split}
&t^{1+\frac{\alpha 
d}{\beta}(1-\frac{1}{p})-\alpha}\int_{0}^{\delta_0 t}\int_{|y|\ge (\delta_0 
t)^{\alpha/\beta}}(t-\tau)^{\alpha-\frac{\alpha 
d}{\beta}(1-\frac{1}{p})-1}|f(\tau,y)|\, d\tau\, d y\\
\le &(1-\delta_0)^{\alpha-\frac{\alpha 
d}{\beta}(1-\frac{1}{p})-1}\int_0^{\delta_0 t}\int_{|y|\ge (\delta_0 
t)^{\alpha/\beta}}|f(\tau,y)|\, d\tau\, d y,
\end{split}
\]
which clearly tends to zero as \(t\to\infty\). The upper bound for the second 
integral is
\[
t^{1+\frac{\alpha 
d}{\beta}(1-\frac{1}{p})-\alpha}\int_{\delta_0 
t}^t\int_{\R^d}(t-\tau)^{\alpha-\frac{\alpha 
d}{\beta}(1-\frac{1}{p})-1}|f(\tau,y)|\, d\tau\, d y
\]
 
This integral causes problems, since now there is a singularity in \(t\). But the 
assumption $p \in [1, \kappa_2)$ guarantees that the singularity is weak.
We use the decay condition~\eqref{decay_cond_f} imposed for the source 
term. By using this, we have
\[
\begin{split}
I_5\lesssim& \,
t^{1+\frac{\alpha 
d}{\beta}(1-\frac{1}{p})-\alpha}\int_{\delta_0 
t}^t(t-\tau)^{\alpha-\frac{\alpha 
d}{\beta}(1-\frac{1}{p})-1}(1+\tau)^{-\gamma}\, d\tau\\
\lesssim& \,
t^{1+\frac{\alpha 
d}{\beta}(1-\frac{1}{p})-\alpha-\gamma}\int_{0}^{\delta_0 
t}\tau^{\alpha-\frac{\alpha 
d}{\beta}(1-\frac{1}{p})-1}\, d\tau\lesssim t^{1-\gamma}, 
\end{split}
\]
which tends to zero as \(t\to\infty\), since \(\gamma>1\). This, finally, 
finishes the proof of the case where $d > 2 \beta$ and $0 <\alpha <1$. The other cases are proved similarly. We omit the details.
\end{proof}


\section{Optimal $L^2$-decay for mild solutions}\label{L2}

In this section we will give the proof of Theorem~\ref{thm_optimal_decay}. Here we only consider equation~\eqref{classical_equation}, but our reasoning can be extended to cover a wider range of equations. The main tool we use is Plancherel's theorem, but in general it can be replaced by more general multiplier theorems which allow one to study more general equations, too. For details of such an approach we refer to our earlier paper~\cite{KempSiljVergZach14}. Here we restrict our study to equation~\eqref{classical_equation} for simplified exposition. 

We begin this section by showing that our decay rate is optimal. Indeed, we have the following result.

\begin{proposition}
Let $\alpha\in (0,1)$, \(d\ge 1\), and \(d\neq 2\beta\). Suppose $u$ is the mild solution of the Cauchy problem~\eqref{classical_equation} with $f \equiv 0$. Assume further that \(u_0\in L^1(\R^d)\cap 
L^2(\R^d)\) with $\int_{\Rn} u_0 \, dx \neq0$. Then
\[
\|u(t,\cdot)\|_2\gtrsim t^{-\alpha\mathrm{min}\{1,\frac{d}{2\beta}\}},\quad t\ge 1.
\]
The constant in the estimate depends on $\int_{\Rn} u_0 \, dx$.
\end{proposition}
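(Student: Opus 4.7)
The plan is to pass to the Fourier side via Plancherel and exploit the two-sided asymptotics~\eqref{ML2_estimate} of the Mittag--Leffler function $E_{\alpha,1}(-\cdot)$. Since $f\equiv 0$, the mild solution satisfies $u(t,\cdot)=Z(t,\cdot)\star u_0$, so by (\ref{fund_fourier_trans}) and our Fourier convention,
\[
\widehat u(t,\xi)=E_{\alpha,1}(-|\xi|^\beta t^\alpha)\,\widehat u_0(\xi).
\]
The asymptotic~\eqref{ML2_estimate} gives both upper and lower bounds $E_{\alpha,1}(-x)\asymp (1+x)^{-1}$ for $x\in\R_+$, so by Plancherel,
\[
\|u(t,\cdot)\|_2^2\gtrsim \int_{\R^d}\frac{|\widehat u_0(\xi)|^2}{(1+|\xi|^\beta t^\alpha)^2}\,d\xi.
\]

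Since $u_0\in L^1(\R^d)$, the Fourier transform $\widehat u_0$ is continuous, and by hypothesis $\widehat u_0(0)=(2\pi)^{-d/2}\int u_0\,dx\neq 0$. Hence there exist $\delta>0$ and $c>0$ (depending on $u_0$) with $|\widehat u_0(\xi)|\ge c$ for all $|\xi|\le\delta$. Restricting the above integral to this ball and rescaling $\eta=t^{\alpha/\beta}\xi$ yields
\[
\|u(t,\cdot)\|_2^2 \gtrsim t^{-\alpha d/\beta}\int_{|\eta|\le\delta t^{\alpha/\beta}}\frac{d\eta}{(1+|\eta|^\beta)^2}.
\]
This is the key reduction: the lower bound is now entirely controlled by a geometric computation on the rescaled ball.

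I would then split into the two cases dictated by the exponent $\min\{1,d/(2\beta)\}$. If $d<2\beta$, the integrand is integrable at infinity, so for $t\ge 1$ the integral is bounded below by a positive constant $\int_{\R^d}(1+|\eta|^\beta)^{-2}\,d\eta$, giving
\[
\|u(t,\cdot)\|_2^2\gtrsim t^{-\alpha d/\beta},
\]
which is the claim since $\min\{1,d/(2\beta)\}=d/(2\beta)$. If $d>2\beta$, the integral diverges, and a polar-coordinate estimate shows
\[
\int_{|\eta|\le\delta t^{\alpha/\beta}}\frac{d\eta}{(1+|\eta|^\beta)^2}\gtrsim \int_1^{\delta t^{\alpha/\beta}}r^{d-2\beta-1}\,dr\gtrsim t^{\alpha(d-2\beta)/\beta},\quad t\ge 1,
\]
so $\|u(t,\cdot)\|_2^2\gtrsim t^{-\alpha d/\beta}\cdot t^{\alpha(d-2\beta)/\beta}=t^{-2\alpha}$, matching $\min\{1,d/(2\beta)\}=1$.

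The only subtle point is the two-sided character of the asymptotic~\eqref{ML2_estimate}, which is the reason we obtain a sharp lower bound and not merely an upper one; everything else is a positivity-preserving chain of estimates. Since the case $d=2\beta$ is excluded from the statement (there would be an extra logarithmic factor in the lower case-2 integral), no further analysis is needed.
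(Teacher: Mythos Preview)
Your proof is correct and follows essentially the same route as the paper: Plancherel, the lower bound $E_{\alpha,1}(-x)\gtrsim (1+x)^{-1}$, and restriction to a small ball near the origin where $|\widehat u_0|$ is bounded away from zero by continuity. The only difference is cosmetic: the paper keeps the ball radius $\rho$ free, obtains the bound $\|u(t,\cdot)\|_2^2\gtrsim \rho^d(1+\rho^\beta t^\alpha)^{-2}$, and then picks two specific radii ($\rho=\rho_0$ and $\rho=\rho_0(1+t^\alpha)^{-1/\beta}$) to get the two rates simultaneously without a case split, whereas you fix $\delta$, rescale, and split on whether the resulting integral converges.

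One small slip: in the case $d<2\beta$ you write that the integral over $|\eta|\le \delta t^{\alpha/\beta}$ is bounded below by $\int_{\R^d}(1+|\eta|^\beta)^{-2}\,d\eta$; that inequality goes the wrong way. The correct lower bound (for $t\ge 1$) is $\int_{|\eta|\le \delta}(1+|\eta|^\beta)^{-2}\,d\eta>0$, which is all you need.
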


\begin{proof}

Let \(\rho_0>0\), \(t>0\) and 
\(\rho=\rho(t)\in(0,\rho_0]\). By Plancherel's Theorem, monotonicity of 
\(E_{\alpha,1}\), and the estimate $E_{\alpha,1}(-x)\ge c_1/(1+x)$ for all $x\ge 0$ (with some $c_1>0$), we have
\begin{align}
\|u(t,\cdot)\|_{L^2}^2&=\|\widehat{u}(t,\cdot)\|_{L^2}^2=\int_{\R^d} 
|\widehat{Z}(t,\xi)|^2|\widehat{u}_0(\xi)|^2\, d\xi\nonumber\\
&
\ge\frac{1}{(2\pi)^d} \int_{B_\rho(0)}E_{\alpha,1}(-|\xi|^\beta t^\alpha)^2 
|\widehat{u}_0(\xi)|^2 d \xi \nonumber\\
& \ge \frac{c_1^2}{(2\pi)^d (1+\rho^\beta 
t^\alpha)^2}\int_{B_\rho(0)}|\widehat{u}_0(\xi)|^2\, d\xi\nonumber \\
&=\frac{c_2}{(1+\rho^\beta 
t^\alpha)^2}\rho^d\Big(\rho^{-d}\int_{B_\rho}|\widehat{u}_0(\xi)|^2\, d\xi\Big). \label{lest1}
\end{align}

By the Plancherel Theorem and the Riemann-Lebesgue Lemma we have 
\(\widehat{u}_0\in C_0(\R^d)\cap L^2(\R^d)\). By the Lebesgue differentiation 
theorem, we may choose \(\rho_0\)  small enough in order to obtain
\[
\rho^{-d}\int_{B_\rho} |\widehat{u}_0(\xi)|^2\, d\xi\ge \frac{|\widehat u(0)|^2}2\quad\textrm{for 
all \(\rho\in(0,\rho_0]\)}.
\]
Using this in~\eqref{lest1} gives the lower bound 

\begin{equation}\label{lb1}
\|u(t,\cdot)\|_{L^2}^2\ge \frac{c_2|\widehat u(0)|^2\rho^d}{2(1+\rho^\beta t^\alpha)^2}.
\end{equation}

Next we choose 
\(\rho=\rho_0\), which yields
\[
\|u(t,\cdot)\|_{L^2}^2
\gtrsim t^{-2\alpha}
\]
for \(t\ge 1\). On the other hand, the choice \(\rho=\rho(t)=\frac{\rho_0}{(1+t^\alpha)^{1/\beta}}\) gives \(\rho(t)^\beta t^\alpha\le \rho_0^\beta\) and thus 
by~\eqref{lb1} we get the estimate
\[
\|u(t,\cdot)\|_{L^2}^2
\gtrsim t^{-\frac{\alpha 
d}{\beta}},\quad t\ge 1.
\]
These estimates combined together give the claimed lower bound. 
\end{proof}

Observe that the constant in the above proposition is of the form $C=C(\rho_0)|\int_{\Rn} u \, dx|$, where also $\rho_0$ depends on $|\int_{\Rn} u \, dx|$. Nevertheless, we obtain that the decay rate in Theorem~\ref{thm_optimal_decay} is optimal. We will now give a proof of this decay result.

\begin{proof}[Proof of Theorem~\ref{thm_optimal_decay}]
To prove the upper bound, we proceed as in~\cite[Theorem 4.2]{KempSiljVergZach14}. Suppose 
that \(d<2\beta\). By Plancherel's Theorem, the Riemann-Lebesgue Lemma and the 
estimate~\eqref{ML2_estimate}, we have
\begin{equation}\label{uest1}
\begin{split}
\|u(t,\cdot)\|_{L^2}^2&=\|\widehat{u}(t,\cdot)\|_{L^2}^2=\int_{\R^d}|\widehat{Z}(t,
\xi)|^2|\widehat{u}_0(\xi)|^2\, d\xi\le \|\widehat{u}_0\|_{L^\infty}\int_{\R^d}
|\widehat{Z}(t,\xi)|^2\, d\xi\\
&\lesssim \|u_0\|_{L^1}^2\int_{\R^d}\frac{\, d\xi}{(1+|\xi|^\beta 
t^\alpha)^2}=\|u_0\|_{L^1}^2t^{-\frac{\alpha 
d}{\beta}}\int_{\R^d}\frac{\, d\eta}{(1+|\eta|^\beta)^2},
\end{split}
\end{equation}
where in the last step we have made the change of variables 
\(\xi\leftrightarrow \xi t^{\alpha/\beta}=:\eta\). Now the condition 
\(d<2\beta\) guarantees that the last integral is converging. Hence we have 
derived the upper bound in the case \(d<2\beta\).

We are left with the case \(d > 2\beta\). Here we use the Hardy-Littlewood-Sobolev
Theorem on fractional integration. Indeed, we choose \(q=2\) 
in Theorem~\ref{HLS_thm} to obtain
\begin{equation}\label{HLS_est1}
\|(-\Delta)^{-\frac{\beta}{2}} u_0\|_{L^2}\lesssim \|u_0\|_{L^\frac{2d}{d+\beta}}<\infty,
\end{equation}
since \(u_0\in L^1(\R^d)\cap L^2(\R^d)\) implies that \(u_0\in 
L^{\frac{2d}{d+\beta}}(\R^d)\) by interpolation. Using this and the 
estimate~\ref{ML_estimate}, we have
\[
\begin{split}
\|u(t,\cdot)\|_{L^2}^2&= \int_{\R^d} 
|\xi|^{2\beta}|\widehat{Z}(t,\xi)|^2||\xi|^{-\beta}\widehat{u}
_0(\xi)|^2\, d\xi\\
&\lesssim t^{-2\alpha}\int_{\R^d}\frac{|\xi|^{2\beta} 
t^{2\alpha}}{(1+|\xi|^{2\beta}t^{2\alpha})^2}||\xi|^{-\beta}\widehat{u}
_0(\xi)|^2\, d\xi\\
&\lesssim t^{-2\alpha} 
\int_{\R^d}|\xi|^{-2\beta}|\widehat{u}_0(\xi)|^2\, d\xi=t^{-2\alpha}\|(-\Delta)^{
-\frac{\beta}{2}} u_0\|_{L^2}^2,
\end{split}
\]
which completes the proof by~\eqref{HLS_est1}.

For the borderline case $d=2\beta$ we estimate directly by Young's inequality~\eqref{Youngw} to obtain
\[
\|u(t, \cdot)\|_{L^{2,\infty}}=\|u_0(\cdot) \star Z(t, \cdot)\|_{L^{2,\infty}} \le C \|Z(t, \cdot)\|_{L^{2, \infty}}\|u_0\|_{L^1} \le C\|u_0\|_{L^1}   t^{-\alpha},
\]
where we used Lemma~\ref{Z_Lp_decay} to estimate the weak $L^2$--norm of $Z$. This finishes the proof.
\end{proof}

\section{Energy method and $L^2$--decay for weak solutions}

In this section we consider the $L^2$--decay of weak solutions which are defined in Definition~\ref{weak_solution_def}.  We will restrict our study to the homogeneous case $f \equiv 0$. We will proceed in a rather formal manner where we prove the estimates starting directly from the equation by multiplying it with the appropriate test functions. For the details required for the rigorous treatment starting from the Definition~\ref{weak_solution_def}, we refer to~\cite{KempSiljVergZach14}.

In the proof of Theorem~\ref{decay_weak_solution}, we will need the following Lemma from~\cite{VergZach15}.

\begin{lemma} \label{LemmaL2IN}
Let $T>0$ and $\Omega\subset \Rn$ be an open set. Let $k\in W^{1,1}_{loc}([0,\infty))$ be nonnegative and nonincreasing. Then for any $v\in L^2((0,T)\times \Omega)$ and any $v_0\in L^2(\Omega)$ there holds
\begin{equation} \label{L2norminequ}
\int_{\Omega}v\partial_t\big(k * [v-v_0]\big)\,dx\ge \|v(t, \cdot)\|_{L^2(\Omega)}
\partial_t\big(k* \big[\|v\|_{L^2(\Omega)}-\|v_0\|_{L^2(\Omega)}\big]\big)(t),
\end{equation}
for almost every $t\in (0,T)$.
\end{lemma}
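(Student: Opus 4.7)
The plan is to derive an explicit pointwise (in $x$) decomposition of $\partial_t(k*[v-v_0])$ that exhibits it as a nonnegatively weighted combination of the increments $v(t,x)-v_0(x)$ and $v(t,x)-v(s,x)$, and then to apply the Cauchy--Schwarz inequality in $\Omega$ to each such increment separately. The result will then be recognized as the same decomposition applied to the scalar function $t \mapsto \|v(t,\cdot)\|_{L^2(\Omega)}$.

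Concretely, the first step is the identity
\begin{equation*}
\partial_t\bigl(k * [v(\cdot,x) - v_0(x)]\bigr)(t) = k(t)[v(t,x) - v_0(x)] + \int_0^t \bigl(-k'(t-s)\bigr)[v(t,x) - v(s,x)]\, ds,
\end{equation*}
which is verified for smooth $v$ by a direct integration by parts (write $v(s) - v_0 = [v(s) - v(t)] + [v(t) - v_0]$ in the convolution, use $\int_0^t k(t-s)\,ds = \int_0^t k(\tau)\,d\tau$, and move one derivative onto $k$) and extended to $v\in L^2$ via mollification. The crucial structural feature is that $k\ge 0$ and $-k'\ge 0$ under the hypotheses, so both prefactors are nonnegative. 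Multiplying by $v(t,x)$, integrating over $\Omega$, and applying Fubini yields
\begin{equation*}
\int_\Omega v\,\partial_t(k*[v-v_0])\,dx = k(t)\,A(t) + \int_0^t \bigl(-k'(t-s)\bigr)\,B(t,s)\,ds,
\end{equation*}
with $A(t)=\langle v(t,\cdot),v(t,\cdot)-v_0\rangle_{L^2(\Omega)}$ and $B(t,s)=\langle v(t,\cdot),v(t,\cdot)-v(s,\cdot)\rangle_{L^2(\Omega)}$.

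The second step is Cauchy--Schwarz on these inner products:
\begin{equation*}
A(t) \ge \|v(t,\cdot)\|_{L^2(\Omega)}\bigl(\|v(t,\cdot)\|_{L^2(\Omega)} - \|v_0\|_{L^2(\Omega)}\bigr), \qquad B(t,s) \ge \|v(t,\cdot)\|_{L^2(\Omega)}\bigl(\|v(t,\cdot)\|_{L^2(\Omega)} - \|v(s,\cdot)\|_{L^2(\Omega)}\bigr),
\end{equation*}
where the inequalities are preserved upon multiplication by the nonnegative weights $k(t)$ and $-k'(t-s)$. Factoring $\|v(t,\cdot)\|_{L^2(\Omega)}$ out of the two resulting terms produces precisely the right-hand side of the above decomposition identity, now evaluated on the scalar function $t\mapsto \|v(t,\cdot)\|_{L^2(\Omega)}$ with initial value $\|v_0\|_{L^2(\Omega)}$, i.e.\ $\partial_t\bigl(k*[\|v\|_{L^2(\Omega)} - \|v_0\|_{L^2(\Omega)}]\bigr)(t)$. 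This gives \eqref{L2norminequ}.

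The main obstacle is the rigorous justification of the decomposition identity and the subsequent Fubini step at the regularity level of the lemma: $k\in W^{1,1}_{loc}$ (hence potentially unbounded at $0$) and $v\in L^2((0,T)\times\Omega)$ with no a priori time-regularity. The standard remedy is to work with the shifted kernels $k_\varepsilon(\tau):=k(\tau+\varepsilon)$ (which are bounded and still nonincreasing) and a time-mollification $v_\delta$ of $v$, prove the identity and the inequality in the smooth setting, and then pass to the limit $\delta\to 0$ and $\varepsilon\to 0$ using $-k'\ge 0$ and the local $L^1$-bound on $k$ and $k'$ to dominate the $s$-integral. Once this approximation scaffolding is in place, the algebraic Cauchy--Schwarz step is immediate and does not interact with the nonlocal nature of $k$.
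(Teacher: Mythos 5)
Your proof is correct and is essentially the argument underlying the paper's own (cited) proof: the paper defers to \cite{VergZach15} and to Lemma 6.2 of \cite{KempSiljVergZach14}, where the lemma is proved exactly by the decomposition $\partial_t\big(k*[v-v_0]\big)(t)=k(t)\,[v(t)-v_0]+\int_0^t\big(-k'(t-s)\big)[v(t)-v(s)]\,ds$ with nonnegative weights, followed by Cauchy--Schwarz on each increment and reassembly of the same identity for the scalar function $t\mapsto\|v(t,\cdot)\|_{L^2(\Omega)}$. One minor correction: since $k\in W^{1,1}_{loc}([0,\infty))$ is locally absolutely continuous, $k(0)$ is finite and $k$ is bounded near $0$, so the shifted kernels $k_\varepsilon(\tau)=k(\tau+\varepsilon)$ are unnecessary (the genuinely singular kernel $g_{1-\alpha}$ is not in $W^{1,1}_{loc}$ and is handled elsewhere in the paper by Yosida approximation), and the identity extends to $v\in L^2$ directly from $\partial_t(k*w)=k(0)w+k'*w$ a.e.
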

\begin{proof}
The result is originally from~\cite{VergZach15}. For the proof in our context we refer to Lemma 6.2 in~\cite{KempSiljVergZach14}.
\end{proof}

Observe that in our case the kernel $k$ corresponds to $g_{1-\alpha}$. The function $g_{1-\alpha}$ is, however, not in $W^{1,1}$. For this reason, a rigorous treatment of the problem requires an appropriate regularization of the fractional
derivation operator in time. One way to do this is via its Yosida approximations, which leads to an integro-differential
operator of the same form with a kernel $g_{1-\alpha, n}$ that is also nonnegative and nonincreasing, and which belongs
to $W^{1,1}_{loc}([0,\infty))$. The details of such calculations can be found in~\cite{KempSiljVergZach14}, see also 
\cite{Zach08}.
Note that the regularized weak formulation used in~\cite{KempSiljVergZach14} and \cite{Zach08} does not involve an integral in time
on $[0,T]$, but it requires the validity of a certain relation pointwise a.e.\ in $(0,T)$. Here we proceed on a formal level by using the singular kernel $g_{1-\alpha}$ and a formulation of the problem where we only integrate in space (not in time) against a test
function.

\begin{lemma}\label{L1}
Let $u_0\in L^1(\R^d)\cap L^2(\R^d)$. Suppose $u$ is a weak solution of equation~\eqref{weak_equation} with initial condition $u|_{t=0}=u_0$, and assume that
(\ref{ucondition}) is satisfied.
Then 
\[
\|u(t,\cdot)\|_{L^1(\Rn)} \le  \|u_0\|_{L^1}
\]
for a.a.\ $t > 0$.
\end{lemma}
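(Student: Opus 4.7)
The approach mimics the proof of Lemma \ref{LemmaL2IN} with the quadratic function $s\mapsto s^2/2$ replaced by a smooth convex approximation of $|\cdot|$. Fix $\epsilon>0$ and set $H_\epsilon(s):=\sqrt{s^2+\epsilon^2}-\epsilon$; then $H_\epsilon \in C^\infty(\R)$ is convex with $H_\epsilon(0)=0$ and $H_\epsilon\uparrow|s|$ as $\epsilon\downarrow 0$, while $H_\epsilon'(s)=s/\sqrt{s^2+\epsilon^2}$ is bounded, nondecreasing, $(1/\epsilon)$-Lipschitz, and vanishes at the origin. These last two facts ensure that $H_\epsilon'(u(t,\cdot))\in W^{\beta/2,2}(\R^d)$ whenever $u(t,\cdot)$ is, so $H_\epsilon'(u)$ serves as an admissible test function. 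Working formally---the rigorous argument uses the Yosida approximation of $\partial_t^\alpha$ as in \cite[\S 6]{KempSiljVergZach14} and \cite{Zach08}---I would test the equation against $H_\epsilon'(u)$ and integrate over $\R^d$ to arrive at
\begin{equation*}
\int_{\R^d} H_\epsilon'(u)\,\partial_t\bigl(g_{1-\alpha}*(u-u_0)\bigr)\,dx + \mathcal{E}\bigl(u,H_\epsilon'(u)\bigr)=0.
\end{equation*}

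Two sign estimates then drive the proof. First, since $H_\epsilon'$ is nondecreasing and $K\ge 0$, the integrand of $\mathcal{E}(u,H_\epsilon'(u))$ is pointwise nonnegative, so $\mathcal{E}(u,H_\epsilon'(u))\ge 0$ (a Stroock--Varopoulos-type bound). Second, the pointwise-in-$x$ convexity inequality for Riemann--Liouville derivatives that underlies Lemma \ref{LemmaL2IN} (see \cite{VergZach15}) applied to the convex $C^1$ function $H_\epsilon$ gives
\begin{equation*}
H_\epsilon'(u)\,\partial_t\bigl(g_{1-\alpha}*(u-u_0)\bigr)\ge \partial_t\bigl(g_{1-\alpha}*(H_\epsilon(u)-H_\epsilon(u_0))\bigr)\quad\text{a.e. in }(t,x).
\end{equation*}
Combining the two and integrating in $x$ produces
\begin{equation*}
\partial_t\Bigl(g_{1-\alpha}*\int_{\R^d}\bigl[H_\epsilon(u(\cdot))-H_\epsilon(u_0)\bigr]\,dx\Bigr)\le 0\quad\text{a.a. }t>0.
\end{equation*}

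Passing $\epsilon\downarrow 0$ by dominated convergence (using $0\le H_\epsilon(s)\le|s|$ together with $u(t,\cdot), u_0\in L^1$) yields
\begin{equation*}
\partial_t\bigl(g_{1-\alpha}*v\bigr)\le 0\quad\text{a.a. }t>0,\qquad v(t):=\|u(t,\cdot)\|_{L^1}-\|u_0\|_{L^1}.
\end{equation*}
Since $v$ is locally bounded, $(g_{1-\alpha}*v)(0^+)=0$, and the fundamental identity $g_\alpha*\partial_t^\alpha v=v$ (verified on the Laplace side using $g_\alpha*g_{1-\alpha}=1$ and the vanishing initial value of $g_{1-\alpha}*v$) combined with $g_\alpha\ge 0$ then forces $v\le 0$ a.a., which is the asserted contraction.

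The main obstacle is the rigorous justification of testing by the nonlinear function $H_\epsilon'(u)$, since $u$ itself lacks time regularity. This is handled by first replacing $g_{1-\alpha}$ with its Yosida approximation $g_{1-\alpha,n}\in W^{1,1}_{\mathrm{loc}}$ and carrying out the two sign estimates at the regularized level, exactly as in \cite[\S 6]{KempSiljVergZach14}; condition (\ref{ucondition}) is precisely what allows the bilinear form term to be controlled along the approximation, and the Lipschitzness of $H_\epsilon'$ with $H_\epsilon'(0)=0$ ensures that $H_\epsilon'(u)$ stays in $W^{\beta/2,2}$. The successive passages to the limit $n\to\infty$ and $\epsilon\to 0$ are then routine.
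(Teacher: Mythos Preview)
Your core approach---test with $H_\epsilon'(u)$, use the pointwise convexity inequality for the fractional time derivative, exploit nonnegativity of the bilinear form, then send $\epsilon\to 0$---is exactly the paper's. There are, however, two points worth flagging.

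\textbf{Where condition (\ref{ucondition}) really enters.} The paper does not test with $H_\epsilon'(u)$ alone but with $H_\epsilon'(u)\psi$, where $\psi$ is a spatial cut-off supported in $B_{R+1}$. After the convexity step and the limit $\epsilon\to 0$, the bilinear form produces a term
\[
F(t)=\iint K(x,y)[u(x)-u(y)]\big[\operatorname{sign}u(x)\,\psi(x)-\operatorname{sign}u(y)\,\psi(y)\big]\,dx\,dy,
\]
which is bounded below by a cross-term $F_1$ involving $\psi(x)-\psi(y)$. It is this $F_1$ that is controlled by $[u(t,\cdot)]_{W^{\beta/2,1}}$ and shown to vanish as $R\to\infty$; condition (\ref{ucondition}) is needed precisely here. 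In your version, with no cut-off, $\mathcal{E}(u,H_\epsilon'(u))\ge 0$ is simply discarded and (\ref{ucondition}) plays no role whatsoever---so your closing remark that (\ref{ucondition}) ``controls the bilinear form term along the approximation'' is a misattribution. Your streamlined route is legitimate at the formal level (since $|H_\epsilon'(s)|\le|s|/\epsilon$ keeps the test function in $L^2\cap W^{\beta/2,2}$), but you should be aware that the paper's cut-off is what makes the hypothesis (\ref{ucondition}) relevant.

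\textbf{A circularity in the limit $\epsilon\to 0$.} You invoke dominated convergence for $\int_{\R^d} H_\epsilon(u)\,dx\to\int_{\R^d}|u|\,dx$ ``using $u(t,\cdot)\in L^1$''---but that is the very conclusion to be proved. The fix is easy: since $H_\epsilon(s)\uparrow|s|$ as $\epsilon\downarrow 0$, monotone convergence (or Fatou) applies. Better still, follow the paper's order of operations: convolve the differential inequality with $g_\alpha$ \emph{before} sending $\epsilon\to 0$, obtaining $\int H_\epsilon(u(t))\,dx\le\int H_\epsilon(u_0)\,dx\le\|u_0\|_{L^1}$ for each $\epsilon>0$, and then let $\epsilon\downarrow 0$ by monotone convergence. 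This also avoids the delicate issue of passing to the limit inside $\partial_t^\alpha$.
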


\begin{proof}
Letting $R>0$ we choose a nonnegative cut-off function $\psi\in C^1_0(B_{R+1})$ such that $\psi=1$ in $B_{R}$ and
$\psi\le 1$ as well as $|\nabla \psi|\le 2$ in $B_{R+1}$. Here $B_\rho$ denotes the ball of radius $\rho>0$ and center $0$. 
For $\varepsilon>0$, define
\[
H_\varepsilon(y)=(y^2+\varepsilon^2)^{\frac{1}{2}}-\varepsilon,\quad y\in \R.
\]
Clearly $H_\varepsilon\in C^1(\R)$ and $H'_\varepsilon\in W^1_\infty(\R)$. Indeed,
\[
H_\varepsilon'(y)=\frac{y}{(y^2+\varepsilon^2)^{\frac{1}{2}}},\quad y\in \R.
\]
Observe that $H_\varepsilon$ is convex. Testing the PDE with $H_\varepsilon'(u)\psi$ gives
\begin{align*}
&\int_{\Rn} H_\varepsilon'(u)\psi \partial_t^\alpha (u-u_0)\,dx+ F_\varepsilon(t)=0,\quad t>0,
\end{align*}
where
\[
F_\varepsilon(t)= \int_{\R^d}\int_{\R^d}
 K(x,y)[u(t,x)-u(t,y)] \big[H_\varepsilon'(u(t,x))\psi(x)-H_\varepsilon'(u(t,y))\psi(y)\big]\, dx \, dy.  
\]
Since $H_\varepsilon$ is convex, we may (formally) use the inequality from Corollary 6.1 in \cite{KempSiljVergZach14},
to the result that pointwise a.e. we have
\[ 
H_\varepsilon'(u)\partial_t^\alpha (u-u_0)\ge \partial_t^\alpha \big(H_\varepsilon(u)-H_\varepsilon(u_0)\big).
\]
Applying this to the previous relation and convolving the resulting inequality with $g_\alpha$ we obtain
\[
\int_{\Rn}\big(H_\varepsilon(u)-H_\varepsilon(u_0)\big)\psi\,dx+g_\alpha\ast F_\varepsilon \le 0,\quad t>0.
\]
Next, we send $\varepsilon\to 0$ and observe that $H_\varepsilon(y)\to |y|$ as well as $H_\varepsilon'(y)\to
 \text{sign\,}y$ for $y\in \R$. Thus we get
\begin{equation} \label{Fequ}
\int_{\Rn}\big(|u(t,x)|-|u_0(x)|\big)\psi(x)\,dx+(g_\alpha\ast F)(t) \le 0,\quad t>0,
\end{equation}
with
\begin{align*}
F(t) &=\int_{\Rn}\int_{\R^d} K(x,y)[u(t,x)-u(t,y)]\cdot [\text{sign\,}u(t,x)\psi(x)-\text{sign\,}u(t,y)\psi(y)]\, dx \, dy\\
& \ge \int_{\Rn}\int_{\R^d} K(x,y)[u(t,x)-u(t,y)]\cdot [\psi(x)-\psi(y)]\,\text{sign\,}u(t,y) \, dx \, dy=:F_1(t).
\end{align*}

Using the properties of $\psi$ and $K$, we may estimate as follows.
\begin{align*}
|F_1(t)| & \le \Lambda \int_{\Rn}\int_{\R^d} \frac{|u(t,x+h)-u(t,x)|\cdot |\psi(x+h)-\psi(x)|}{|h|^{d+\beta}} \, dh \, dx\\
& =  \Lambda \int_{\Rn}\int_{|h|>R/2}\ldots dh\,dx+ \Lambda \int_{\R^d\setminus B_{R/2}}\int_{|h|\le R/2}\ldots dh\,dx\\
& \le \frac{2\Lambda}{(R/2)^\frac{\beta}{2}} \int_{\Rn}\int_{|h|>R/2} \frac{|u(t,x+h)-u(t,x)|}{|h|^{d+\frac{\beta}{2}}}\,dh\,dx\\
& \quad +\Lambda \int_{\R^d\setminus B_{R/2}}\int_{|h|\le R/2}\frac{|u(t,x+h)-u(t,x)|\cdot 2^{1-\frac{\beta}{2}}
|\nabla \psi|_\infty^\frac{\beta}{2}}{|h|^{d+\frac{\beta}{2}}}\, dh\,dx\\
& \le  \frac{2^{1+\frac{\beta}{2}}\Lambda}{R^\frac{\beta}{2}}\,[u(t,\cdot)]_{W^{\frac{\beta}{2},1}(\R^d)}
+2\Lambda \int_{\R^d\setminus B_{R/2}}\int_{\R^d}\frac{|u(t,x+h)-u(t,x)|}{|h|^{d+\frac{\beta}{2}}}\, dh\,dx,
\end{align*}
where the last two terms tend to $0$ for a.a.\ $t>0$ as $R\to \infty$, by assumption (\ref{ucondition}). Thus the assertion follows from (\ref{Fequ}) and the previous estimates by sending $R\to \infty$.
\end{proof}

Finally, we have the following Lemma, which shows that the \(L^2\)-norm of a 
weak solution is a subsolution to a purely time-fractional equation.

\begin{lemma}
Let $K$, $u_0$, and $u$ be as in the previous lemma. Then there exists a constant $\mu=\mu(d,\beta, \lambda, \|u_0\|_{L^1}) >0$ such that (formally)
\begin{equation}\label{time_fractional}
\partial_t^\alpha \big[\|u(t, \cdot)\|_{L^2}-\|u_0\|_{L^2}\big]+\mu\|u(t, \cdot)\|_{L^2}^{1+\frac{2\beta}{d}} \le 0, \quad t \ge 0.
\end{equation}
\end{lemma}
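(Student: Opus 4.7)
The plan is to test the weak equation formally against $u$ itself, then extract the differential inequality by combining Lemma~\ref{LemmaL2IN} with a suitable fractional Gagliardo--Nirenberg estimate, and finally to use the $L^1$ bound from the preceding Lemma~\ref{L1} to close the argument.

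First, I would test (formally) the equation $\partial_t^\alpha(u-u_0)+\mathcal L u=0$ with $u$ itself to obtain
\[
\int_{\R^d} u\,\partial_t^\alpha(u-u_0)\,dx + \mathcal E(u,u) = 0.
\]
Applying Lemma~\ref{LemmaL2IN} (with kernel $k=g_{1-\alpha}$, as usual via Yosida approximations and passage to the limit, see the remark after Lemma~\ref{LemmaL2IN}) to the first term, one gets
\[
\|u(t,\cdot)\|_{L^2}\,\partial_t^\alpha\!\bigl[\|u(t,\cdot)\|_{L^2}-\|u_0\|_{L^2}\bigr] + \mathcal E(u,u) \le 0.
\]
By the lower bound~\eqref{K_assumption} on $K$, the bilinear form is coercive on the Gagliardo seminorm:
\[
\mathcal E(u,u) \ge \lambda\int_{\R^d}\int_{\R^d}\frac{|u(x)-u(y)|^2}{|x-y|^{d+\beta}}\,dx\,dy =: \lambda\,[u]_{W^{\beta/2,2}(\R^d)}^{2}.
\]

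The heart of the proof is now the fractional Gagliardo--Nirenberg inequality
\[
\|v\|_{L^2(\R^d)}^{\,1+\frac{\beta}{d}} \le C\,[v]_{W^{\beta/2,2}(\R^d)}\,\|v\|_{L^1(\R^d)}^{\frac{\beta}{d}},
\]
valid for a constant $C=C(d,\beta)$. This is obtained by combining the fractional Sobolev embedding $W^{\beta/2,2}(\R^d) \hookrightarrow L^{\frac{2d}{d-\beta}}(\R^d)$ with the standard $L^p$-interpolation between $L^{1}$ and $L^{\frac{2d}{d-\beta}}$ at the exponent $\theta=\frac{d}{d+\beta}$; squaring the inequality and rearranging yields
\[
[u]_{W^{\beta/2,2}(\R^d)}^{2} \;\ge\; C^{-2}\,\|u(t,\cdot)\|_{L^2}^{\,2+\frac{2\beta}{d}}\,\|u(t,\cdot)\|_{L^1}^{-\frac{2\beta}{d}}.
\]
Invoking Lemma~\ref{L1} we have $\|u(t,\cdot)\|_{L^1}\le \|u_0\|_{L^1}$ for a.e.\ $t>0$, hence
\[
\mathcal E(u,u) \;\ge\; \mu_0\,\|u(t,\cdot)\|_{L^2}^{\,2+\frac{2\beta}{d}},\qquad \mu_0:=\lambda C^{-2}\|u_0\|_{L^1}^{-\frac{2\beta}{d}}.
\]

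Plugging this lower bound into the inequality above and dividing by $\|u(t,\cdot)\|_{L^2}$ (treating the degenerate set $\{\|u(t,\cdot)\|_{L^2}=0\}$ by a standard regularization $\|u\|_{L^2}+\varepsilon$ and letting $\varepsilon\downarrow 0$) delivers
\[
\partial_t^\alpha\bigl[\|u(t,\cdot)\|_{L^2}-\|u_0\|_{L^2}\bigr] + \mu\,\|u(t,\cdot)\|_{L^2}^{\,1+\frac{2\beta}{d}} \le 0
\]
with $\mu=\mu_0$, which is precisely \eqref{time_fractional}. The main technical obstacle is the justification of the formal testing against $u$: the kernel $g_{1-\alpha}$ is not in $W^{1,1}_{loc}$, so one must regularize the time-fractional derivative (e.g.\ by Yosida approximations as in~\cite{KempSiljVergZach14}), derive the corresponding approximate inequality in which Lemma~\ref{LemmaL2IN} truly applies, and then pass to the limit; a secondary, but minor, point is the restriction $d>\beta$ needed for the Sobolev embedding used above, which however is not a restriction here since the case $d\le \beta$ can be handled by replacing the Sobolev embedding with a direct fractional Nash inequality (yielding the same exponents).
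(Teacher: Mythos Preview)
Your proposal is correct and follows essentially the same route as the paper: test with $u$, apply Lemma~\ref{LemmaL2IN}, use the lower bound on $K$, invoke a Nash/Gagliardo--Nirenberg inequality, and close via Lemma~\ref{L1}. The only cosmetic difference is that the paper cites the fractional Nash inequality directly (from \cite{TerERobiSikoZhu07}), valid in all dimensions, whereas you derive it via the fractional Sobolev embedding plus $L^p$-interpolation and then note the case $d\le\beta$ separately; your explicit handling of the division by $\|u\|_{L^2}$ is a detail the paper leaves implicit.
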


\begin{proof}
Choose the test function $\varphi=u$ in Definition~\ref{weak_solution_def} of weak solutions. We apply Lemma~\ref{LemmaL2IN} for the fractional time derivative to obtain
\[
\|u(t, \cdot)\|_{L^2} \partial_t^\alpha \big[\|u(t, \cdot)\|_{L^2}-\|u_0\|_{L^2}\big]+ \int_{\Rn}\int_{\Rn} K(x,y)[u(t,x)-u(t,y)]^2 \, dx \, dy \le 0.
\]
For the elliptic term, we then use the fractional Nash inequality (cf.~\cite[p. 13]{TerERobiSikoZhu07}) together with the assumption~\eqref{K_assumption} on the kernel $K$ and with Lemma~\ref{L1} to obtain 
\begin{align*}
\|u(t, \cdot)\|_{L^2}^{2+\frac{2\beta}{d}} &\le C\|u(t, \cdot)\|_{L^1}^{\frac{2\beta}{d}}\int_{\Rn}\int_{\Rn} \frac{[u(t,x)-u(t,y)]^2}{|x-y|^{d+\beta}} \, dx \, dy  \\
&\le C\|u_0\|_{L^1}^{\frac{2\beta}{d}}\int_{\Rn}\int_{\Rn} K(x,y)[u(t,x)-u(t,y)]^2 \, dx \, dy.
\end{align*}
This concludes the proof.
\end{proof}

\subsection{Proof of Theorem~\ref{decay_weak_solution}}
We are finally ready to prove the decay result for the weak solutions. The proof is based on using the comparison principle for the purely time-fractional equation~\eqref{time_fractional}.

\begin{proof}[Proof of Theorem~\ref{decay_weak_solution}]
Let $T >0$ be an arbitrary real number. By the comparison principle for time-fractional differential
equations (see \cite[Lemma 2.6 and Remark 2.1]{VergZach15}), the inequality~\eqref{time_fractional} implies that $\|u(t, \cdot)\|_{L^2}\le w(t)$ for almost every \
$t\in (0,T)$, where $w$ solves the equation corresponding to~\eqref{time_fractional}, that is
\[
\partial_t^\alpha (w-w_0)(t)+\mu w(t)^\gamma= 0,\quad t>0,\quad w(0)=w_0:= \|u_0\|_{L^2},
\]
where we put $\gamma={1+\frac{2\beta}{d}}$. It is known that for $w_0>0$ there exist constants $c_1, c_2>0$ such that
\[
\frac{c_1}{1+t^{\frac{\alpha}{\gamma}}}\,\le w(t)\le\,\frac{c_2}{1+t^{\frac{\alpha}{\gamma}}},\quad t\ge 0,
\]
see \cite[Theorem 7.1]{VergZach15}. Since $T>0$ was arbitrary, we conclude that
\[
\|u(t, \cdot)\|_{L^2} \le \,w(t)\le \,\frac{c_2}{1+t^{\frac{\alpha}{\gamma}}}\,=\,\frac{c_2}{1+t^{\frac{\alpha d}{d+2\beta}}},\quad
\mbox{almost every}\;t>0.
\]
This finishes the proof of Theorem~\ref{decay_weak_solution}.
\end{proof}

{\bf Acknowledgements.}
The work has been partially conducted during the research visits of R.Z. to Aalto University in Spring 2013, and of J.K. and J.S. to the University of Ulm in Fall 2014; we thank both institutions for their kind hospitality.
 
The research visits of J.S. has been supported by a V\"ais\"al\"a foundation travel grant as well as via the Ulm International Research Fellows in Mathematics and Economics program of the Faculty of Mathematics and Economics of Ulm university. In addition, J.S. has enjoyed financial support from the Academy of Finland grant 259363, and R.Z., in turn, has been supported by a Heisenberg fellowship of the German Research Foundation (DFG), GZ Za 547/3-1.

\def\cprime{$'$} \def\cprime{$'$}

\bibliographystyle{plain}

\begin{thebibliography}{10}

\bibitem{AlleCaffVass15}
Mark Allen, Luis Caffarelli, and Alexis Vasseur.
\newblock A parabolic problem with a fractional-time derivative.
\newblock {\em arXiv preprint arXiv:1501.07211}, 2015.

\bibitem{BarlBassChenKass09}
Martin~T. Barlow, Richard~F. Bass, Zhen-Qing Chen, and Moritz Kassmann.
\newblock Non-local {D}irichlet forms and symmetric jump processes.
\newblock {\em Trans. Amer. Math. Soc.}, 361(4):1963--1999, 2009.

\bibitem{Boch12}
Salomon Bochner.
\newblock {\em Harmonic analysis and the theory of probability}.
\newblock Courier Corporation, 2012.

\bibitem{BonfVazq13}
Matteo Bonforte and Juan~Luis Vazquez.
\newblock A priori estimates for fractional nonlinear degenerate diffusion
  equations on bounded domains.
\newblock Preprint, 2013.

\bibitem{BonfVazq14}
Matteo Bonforte and Juan~Luis V{\'a}zquez.
\newblock Quantitative local and global a priori estimates for fractional
  nonlinear diffusion equations.
\newblock {\em Adv. Math.}, 250:242--284, 2014.

\bibitem{Braa36}
Boele Lieuwe~Jan Braaksma.
\newblock Asymptotic expansions and analytic continuations for a class of
  barnes-integrals.
\newblock {\em Compositio Mathematica}, 15:239--341, 1936.

\bibitem{CaffChanVass11}
Luis Caffarelli, Chi~Hin Chan, and Alexis Vasseur.
\newblock Regularity theory for parabolic nonlinear integral operators.
\newblock {\em J. Amer. Math. Soc.}, 24(3):849--869, 2011.

\bibitem{CaffSilv07}
Luis Caffarelli and Luis Silvestre.
\newblock An extension problem related to the fractional {L}aplacian.
\newblock {\em Comm. Partial Differential Equations}, 32(7-9):1245--1260, 2007.

\bibitem{CaffSilv09}
Luis Caffarelli and Luis Silvestre.
\newblock Regularity theory for fully nonlinear integro-differential equations.
\newblock {\em Comm. Pure Appl. Math.}, 62(5):597--638, 2009.

\bibitem{CaffSilv11}
Luis Caffarelli and Luis Silvestre.
\newblock Regularity results for nonlocal equations by approximation.
\newblock {\em Arch. Ration. Mech. Anal.}, 200(1):59--88, 2011.

\bibitem{CaffSilv12}
Luis Caffarelli and Luis Silvestre.
\newblock H\"older regularity for generalized master equations with rough
  kernels.
\newblock Preprint., 2012.

\bibitem{CartCast07}
\'Alvaro Cartea and Diego del Castillo-Negrete.
\newblock Fluid limit of the continuous-time random walk with general l\'evy
  jump distribution functions.
\newblock {\em Phys. Rev. E}, 76:041105, Oct 2007.

\bibitem{ChasChavRoss06}
Emmanuel Chasseigne, Manuela Chaves, and Julio~D Rossi.
\newblock Asymptotic behavior for nonlocal diffusion equations.
\newblock {\em Journal de math{\'e}matiques pures et appliqu{\'e}es},
  86(3):271--291, 2006.

\bibitem{ChenLiYama15}
Xing Cheng, Zhiyuan Li and Masahiro Yamamoto.
\newblock Asymptotic behavior of solutions to space-time fractional diffusion equations
\newblock {\em arXiv preprint arXiv:1505.06965v2}, 2015.

\bibitem{CompCace98}
Albert Compte and Manuel~O. C\'aceres.
\newblock Fractional dynamics in random velocity fields.
\newblock {\em Phys. Rev. Lett.}, 81:3140--3143, Oct 1998.

\bibitem{ContTank04}
Rama Cont and Peter Tankov.
\newblock {\em Financial modelling with jump processes}.
\newblock Chapman \& Hall/CRC Financial Mathematics Series. Chapman \&
  Hall/CRC, Boca Raton, FL, 2004.

\bibitem{DragKlaf00}
Julia Dr\"ager and Joseph Klafter.
\newblock Strong anomaly in diffusion generated by iterated maps.
\newblock {\em Phys. Rev. Lett.}, 84:5998--6001, Jun 2000.

\bibitem{Duan05}
Jun-Sheng Duan.
\newblock Time-and space-fractional partial differential equations.
\newblock {\em Journal of mathematical physics}, 46(1):13504--13504, 2005.

\bibitem{DuoaZuaz92}
Javier Duoandikoetxea and Enrique Zuazua.
\newblock Moments, masses de dirac et d{\'e}composition de fonctions.
\newblock {\em Comptes rendus de l'Acad{\'e}mie des sciences. S{\'e}rie 1,
  Math{\'e}matique}, 315(6):693--698, 1992.

\bibitem{EideKoch04}
Samuil~D. Eidelman and Anatoly~N. Kochubei.
\newblock Cauchy problem for fractional diffusion equations.
\newblock {\em J. Differential Equations}, 199(2):211--255, 2004.

\bibitem{Erdelyi53}
Arthur Erd{\'e}lyi, Wilhelm Magnus, Fritz Oberhettinger, Francesco~G Tricomi,
  and Harry Bateman.
\newblock {\em Higher transcendental functions}, volume~1.
\newblock McGraw-Hill New York, 1953.

\bibitem{FelsKass13}
Matthieu Felsinger, Moritz Kassmann.
\newblock Local regularity for parabolic nonlocal operators.
\newblock {\em Commun. Partial Differ. Equations}, 38:1539--1573, 2013.

\bibitem{GelfShil64} Israel~M. Gelfand, Georgi~E. Shilov. 
\newblock {\em Generalized Functions. Vol I: Properties and Operations}. 
\newblock Academic Press Inc., New York, 1968.

\bibitem{Graf04}
Loukas Grafakos.
\newblock {\em Classical and modern fourier analysis}.
\newblock AMC, 10:12, 2004.

\bibitem{Hilf03}
Rudolf Hilfer.
\newblock On fractional diffusion and continuous time random
walks.
\newblock {\em Phys. A},  329:35--40, 2003.

\bibitem{IgnaRoss09}
Liviu~I. Ignat and Julio~D. Rossi.
\newblock Decay estimates for nonlocal problems via energy methods.
\newblock {\em Journal de math{\'e}matiques pures et appliqu{\'e}es},
  92(2):163--187, 2009.
  
\bibitem{Kass09} Moritz Kassmann.
\newblock A priori estimates for integro-differential operators with measurable kernels. 
\newblock {\em Calc. Var. Partial Differential
Equations} (34): 1--21, 2009.

\bibitem{KempSiljVergZach14}
Jukka Kemppainen, Juhana Siljander, Vicente Vergara, and Rico Zacher.
\newblock Decay estimates for time-fractional and other non-local in time
  subdiffusion equations in {$\mathbb R^d$}.
\newblock Submitted, 2014.

\bibitem{KilbSaig04}
Anatoly A.~Kilbas and Megumi Saigo.
\newblock {\em H-transforms. Theory and Applications.}
\newblock Analytical Methods and 
Special Functions, 9. Charman \& Hall/CRC, 390:1, 2004.

\bibitem{KimLee13}
Yong-Cheol Kim and Ki-Ahm Lee.
\newblock Regularity results for fully nonlinear parabolic integro-differential
  operators.
\newblock {\em Math. Ann.}, 357(4):1541--1576, 2013.

\bibitem{KimLim15}
Kyeong-Hun Kim and Sungbin Lim.
\newblock Asymptotic behaviors of fundamental solution and its derivatives related to space-time fractional differential equations.
\newblock {\em arXiv preprint arXiv:1504.07386v4}, 2015.

\bibitem{Koch90}
Anatoly~N. Kochubei.
\newblock Fractional-order diffusion.
\newblock {\em Differ. Equ.}, 26(4):485--492, 1990.

\bibitem{MeerBensScheBeck02}
Mark~M. Meerschaert, David~A. Benson, Hans-Peter Scheffler, and Peter
  Becker-Kern.
\newblock Governing equations and solutions of anomalous random walk limits.
\newblock {\em Phys. Rev. E}, 66:060102, Dec 2002.

\bibitem{MetzKlaf00}
Ralf Metzler and Joseph Klafter.
\newblock The random walk's guide to anomalous diffusion: a fractional dynamics
  approach.
\newblock {\em Physics Reports}, 339(1):1 -- 77, 2000.

\bibitem{MillSamk01}
Kenneth~S. Miller and Stefan~G. Samko.
\newblock Completely monotonic functions.
\newblock {\em Integral Transforms and Special Functions}, 12(4):389--402,
  2001.

\bibitem{Prus93}
Jan Pr{\"u}ss.
\newblock {\em Evolutionary integral equations and applications}, volume~87 of
  {\em Monographs in Mathematics}.
\newblock Birkh\"auser Verlag, Basel, 1993.

\bibitem{Ross09}
Julio~D. Rossi.
\newblock Asymptotics for evolution problems with nonlocal diffusion.
\newblock Manuscript available at
  http://mate.dm.uba.ar/~jrossi/CURSO(Marra)25-3-08.pdf, 2009.

\bibitem{Scho38}
Isaac~J. Schoenberg.
\newblock Metric spaces and completely monotone functions.
\newblock {\em Annals of Mathematics}, pages 811--841, 1938.

\bibitem{Silv07}
Luis Silvestre.
\newblock Regularity of the obstacle problem for a fractional power of the
  laplace operator.
\newblock {\em Communications on pure and applied mathematics}, 60(1):67--112,
  2007.

\bibitem{Silv05}
Luis~Enrique Silvestre.
\newblock {\em Regularity of the obstacle problem for a fractional power of the
  {L}aplace operator}.
\newblock ProQuest LLC, Ann Arbor, MI, 2005.
\newblock Thesis (Ph.D.)--The University of Texas at Austin.

\bibitem{TerERobiSikoZhu07}
A.F.M. Ter Elst, Derek W. Robinson, Adam Sikora and Yueping Zhu.
\newblock Second-order operators with degenerate coefficients.
\newblock {\em Proceedings of the London Mathematical Society}, 95(2):299--328,
  2007.


\bibitem{Vazq14}
Juan~Luis V{\'a}zquez.
\newblock Barenblatt solutions and asymptotic behaviour for a nonlinear
  fractional heat equation of porous medium type.
\newblock {\em Journal of the European Mathematical Society}, 16(4):769--803,
  2014.

\bibitem{VergZach15}
Vicente Vergara and Rico Zacher.
\newblock Optimal decay estimates for time-fractional and other nonlocal
  subdiffusion equations via energy methods.
\newblock {\em SIAM Journal on Mathematical Analysis}, 47(1):210--239, 2015.

\bibitem{Wats95}
George~Neville Watson.
\newblock {\em A treatise on the theory of Bessel functions}.
\newblock Cambridge university press, 1995.

\bibitem{Zach05}
Rico Zacher.
\newblock Maximal regularity of type $L_p$ for
abstract parabolic Volterra equations.
\newblock {\em J. Evol. Equ.}, 5:79--103, 2005.

\bibitem{Zach08}
Rico Zacher. 
\newblock Boundedness of weak solutions to evolutionary partial
integro-differential equations with discontinuous coefficients.
\newblock {\em J. Math. Anal. Appl.}, 348:137--149, 2008.

\bibitem{Zach09}
Rico Zacher.
\newblock Weak solutions of abstract evolutionary integro-differential
  equations in {H}ilbert spaces.
\newblock {\em Funkcial. Ekvac.}, 52(1):1--18, 2009.

\bibitem{Zach13b}
Rico Zacher.
\newblock A {D}e {G}iorgi--{N}ash type theorem for time fractional diffusion
  equations.
\newblock {\em Math. Ann.}, 356(1):99--146, 2013.

\bibitem{Zach13a}
Rico Zacher.
\newblock A weak {H}arnack inequality for fractional evolution equations with
  discontinuous coefficients.
\newblock {\em Ann. Sc. Norm. Super. Pisa Cl. Sci. (5)}, 12(4):903--940, 2013.

\bibitem{Zuaz03}
Enrique Zuazua.
\newblock Large time asymptotics for heat and dissipative wave equations.
\newblock Manuscript available at http://www.uam.es/enrique.zuazua, 2003.

\end{thebibliography}

\end{document}